\numberwithin{equation}{section}
\newtheorem{theorem}{Theorem}[section]
\newtheorem{corollary}[theorem]{Corollary}
\newtheorem{lemma}[theorem]{Lemma}
\newtheorem{proposition}[theorem]{Proposition}
\newtheorem{definition}[theorem]{Definition}
\newtheorem{example}[theorem]{Example}
\newtheorem{remark}[theorem]{Remark}
\newtheorem{conjecture}[theorem]{Conjecture}
\title{On the convoy of the ASEP speed process}
\author{Yuan Tian}
\date{\today}
\begin{document}
\begin{abstract}
    We investigate the size of the convoy in the speed process in the multi-species asymmetric simple exclusion process (ASEP). Through a coupling argument, we obtain an exact formula for the expected convoy size by relating it to a combinatorial structure. We prove that the asymptotic expected convoy size is universal for all fixed jump rates $q \in [0,1)$. In the special case $q=0$, we upgrade this to full convergence in distribution. We further establish a critical scaling \( q = 1 - \gamma/\sqrt{n} \) that yields a nontrivial limiting regime. Our analysis builds on Martin’s construction of the convoy and makes use of an orthogonal-polynomial representation of random-walk transition probabilities.
\end{abstract}
\maketitle

\tableofcontents
\section{Introduction}
\subsection{Overview}
The Asymmetric Simple Exclusion Process (ASEP) is one of the most well-studied models in the Kardar-Parisi-Zhang (KPZ) universality class \cite{PhysRevLett.56.889}. The system consists of particles and holes, and the particles can jump both to the right and to the left. If particles are allowed to jump in only one direction, the model reduces to the Totally Asymmetric Simple Exclusion Process (TASEP). First introduced in the probability literature by Spitzer in 1970 \cite{spitzer1970interaction}, it was subsequently studied by both mathematicians and physicists. For further background on interacting particle systems, see the standard reference by Liggett \cite{liggett1985interacting}.

There are various generalizations of the ASEP model. Among these, a particularly important extension is ASEP with a second-class particle, which provides a canonical probe for understanding microscopic characteristics of the rarefaction fan arising from step initial data. The speed of the second-class particle was shown to be uniformly distributed on $\left[-1,1\right]$ for TASEP in \cite{ferrari1995second, mountford2005motion, ferrari2005competition} and later for ASEP in \cite{aggarwal2023asep}. A further generalization of ASEP is the Multi-species ASEP. The Multi-species ASEP has infinitely many particle types, each of which can be regarded as a second-class particle after a suitable color projection. The speed of the second-class particle is also generalized to the \textit{ASEP speed process} \cite{aggarwal2023asep, amir2011tasep}, which is a collection of identically distributed but not independent uniform random variables. The \textit{convoy} is defined as the set of particles sharing the same speed in the Multi-species ASEP, which, despite the continuous marginal distribution of speeds, forms an almost surely infinite set.

In this work, we investigate the asymptotic size of the convoy. More specifically, we study how many particle labels from $1$ to $n$ have exactly the same speed as particle $0$ for large $n$ under two different settings for $q$. We first express the expectation of the size of the ASEP convoy explicitly via an intriguing combinatorial object known as the q-Genocchi number (Theorem \ref{thm.main}). The q-Genocchi number has appeared in earlier independent combinatorial studies, but its relation to probability was previously unknown. However, due to the ill-posed nature of the generating function of the q-Genocchi number, we are unable to carry out a complete asymptotic analysis. 

We obtain asymptotic results in a different approach. It turns out that the convoy size can be analyzed via Askey-Wilson polynomials with parameter $\left(a,b,c,d\right)=\left(1,0,0,0\right)$, which allows us to show that the expected size of the convoy grows on the order of $\sqrt{n}$. Moreover, we determine the constant after normalizing the expectation of the convoy size by $\sqrt{n}$ and prove that it is independent of the jumping rate $q \in [0,1)$ (Theorem \ref{thm.uni}). We conjecture that this universality also holds at the level of convergence in distribution, although we are only able to establish this in the special case $q = 0$ (Theorem \ref{thm.normal}). Finally, we examine a different scaling regime, namely, the double-scaling limit in which $q_n = 1 - \gamma/\sqrt{n} \to 1$ as $n \to \infty$. We identify the asymptotic behavior of the expected convoy size for fixed $\gamma \in \left(0,\infty\right)$ using the same family of orthogonal polynomials (Theorem \ref{weakly}). In this regime, letting \( \gamma \to \infty \) recovers the limiting size for any fixed \( q < 1 \), while letting \( \gamma \to 0 \) forces the convoy size to become negligible on the \( \sqrt{n} \) scale, showing that the convoy disappears as \( q \to 1 \) sufficiently quickly.

\subsection{Definitions and Main results}
In this paper, we consider multi-type interacting particle systems, meaning that some particles are stronger than others and therefore may jump over weaker particles. By convention, particles with smaller labels are stronger than those with larger labels.
\begin{definition}
    The multi-type Asymmetric Simple Exclusion Process (ASEP) $X\left(t\right)$ on $\mathbb{Z}$ is a random permutation of $\mathbb{Z}$ indexed by $t$, with initial condition $X_i\left(0\right) = i$. Here, $X_n\left(t\right)$ is the type of the particle at position $n$ at time $t$. 
    
    On each interval $\left[n, n+1\right]$, there is a Poisson clock with rate $1+q$ and a uniform random $U$ variable in $\left[0,1+q\right]$. The uniform random variable is refreshed each time the clock rings. When the clock rings at time $t$, if $X_n\left(t\right) < X_{n+1}\left(t\right)$ and $U \in [0,1]$, then $X_n\left(t\right)$ and $ X_{n+1}\left(t\right)$ swap. If $X_n\left(t\right) > X_{n+1}\left(t\right)$ and $U \in [1,1+q]$, $X_n\left(t\right)$ and $ X_{n+1}\left(t\right)$ also swap. 
    
    When $q = 0$, the model is called Totally multi-type Asymmetric Simple Exclusion Process (TASEP).
\end{definition}
\begin{remark}
    The existence of this particle system with infinitely many particles is guaranteed by the Harris construction \cite{harris1972nearest, harris1978additive}. 
\end{remark}
\begin{remark}
    There are different terms related to the system. The type of a particle might be also named as the species/color/label of the particle. We will use a mix of them in this paper. 
\end{remark}

Many other particle systems can be recovered from the multi-type ASEP under certain color projections. For instance, if we regard all particles with negative labels as first-class particles, all particles with positive labels as holes, and the particle with label $0$ as a second-class particle (stronger than holes but weaker than first-class particles), then we obtain the ASEP with step initial condition and a single second-class particle. We denote the resulting process by $Y\left(t\right)$. A well-known result concerns the trajectory of the second-class particle: it was first established for TASEP in \cite{mountford2005motion} and later extended to ASEP in \cite{aggarwal2023asep}.

\begin{theorem}[\cite{mountford2005motion}, Theorem 1,  \cite{aggarwal2023asep}, Theorem 1.1]
    Let $Y_0\left(t\right)$ denote the position of the second class particle at time $t$ in the ASEP with step initial condition and a single second-class particle. Then
    \[
    \lim_{t \to \infty}\frac{Y_0\left(t\right)}{\left(1-q\right)t} \overset{\text{a.s.}}{=} U,
    \]
    where $U$ is a uniform random variable on $\left[-1,1\right]$. We define $U$ to be the speed of the second-class particle.
\end{theorem}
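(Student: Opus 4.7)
The plan is to combine a coupling construction with the hydrodynamic limit for ASEP. To begin, I would set up the basic coupling between two ASEP processes $\eta^+$ and $\eta^-$ whose initial configurations agree at every site except at the origin, where $\eta^+$ has a hole and $\eta^-$ has a particle; apart from that single discrepancy, both start from step initial data. Under the basic coupling the two processes agree at every site except at one evolving position, and the trajectory of that discrepancy has the same law as the trajectory of the second-class particle $Y_0(t)$ in the ASEP $Y(t)$ described in the excerpt.

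Next I would invoke the hydrodynamic limit for ASEP with step initial data, which states that the empirical density converges in probability to the entropy solution of the inviscid Burgers equation $\partial_t\rho + (1-q)\partial_x(\rho(1-\rho)) = 0$. With step initial data this solution is the rarefaction fan
\[
\rho(x,t) = \tfrac12\Bigl(1 - \tfrac{x}{(1-q)t}\Bigr) \quad \text{for } |x| < (1-q)t,
\]
and the characteristic at density $\rho$ has slope $(1-q)(1-2\rho)$, which sweeps out the full interval $[-(1-q),(1-q)]$. By attractiveness of the basic coupling and comparison with translation-invariant Bernoulli product measures at each density, a second-class particle inserted into such a rarefaction fan must asymptotically follow one of these characteristics, selected with probability equal to the local density. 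Hence if $V = \lim_{t\to\infty} Y_0(t)/((1-q)t)$ exists almost surely, one can identify $P(V \le u) = \rho((1-q)ut, t) = (1-u)/2$, so $V$ is uniform on $[-1,1]$.

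The main obstacle is upgrading convergence in probability, which follows readily from hydrodynamics, to almost sure convergence. For $q = 0$, Mountford and Guiol accomplish this using the last passage percolation representation of TASEP: the trajectory of the second-class particle corresponds to a competition interface between two growing clusters, and the subadditive ergodic theorem applied to exponential LPP passage times yields the almost-sure limit together with its uniform distribution. For general $q \in (0,1)$ the LPP representation is unavailable, and Aggarwal substitutes a delicate stationary-coupling argument using color projections and comparison to Bernoulli stationary measures at a continuum of densities to produce sharp enough concentration estimates. I expect the hardest ingredient in any such proof is ruling out that the second-class particle changes its asymptotic direction infinitely often, particularly in the partially asymmetric regime where most TASEP-specific tools do not apply.
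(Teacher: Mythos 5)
This statement is a background result that the paper does not prove at all: it is quoted verbatim from the literature (Mountford--Guiol for $q=0$, Aggarwal--Corwin--Ghosal for general $q$), so there is no in-paper argument to compare against. Judged on its own terms, your proposal is a survey of how those papers proceed rather than a proof: the two genuinely hard points --- that the limit $V=\lim_{t\to\infty}Y_0(t)/((1-q)t)$ exists \emph{almost surely}, and that the limiting law is exactly uniform --- are precisely the content of the cited theorems, and your sketch handles them by deferring to ``the LPP/competition-interface argument'' and ``Aggarwal's stationary-coupling argument'' without supplying either. The step ``by attractiveness and comparison with Bernoulli product measures, a second-class particle inserted into such a rarefaction fan must asymptotically follow one of these characteristics, selected with probability equal to the local density'' is not a consequence of attractiveness alone; it is the heuristic that Ferrari--Kipnis made rigorous only at the level of convergence in distribution, and upgrading it to an a.s.\ statement (ruling out that the particle oscillates between characteristics) is exactly what Mountford--Guiol and Aggarwal--Corwin--Ghosal had to invent new machinery for. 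So as a blind proof the proposal has a genuine gap: it identifies the right architecture but proves none of the load-bearing steps.

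Two smaller corrections. First, the distributional identification is stated with the wrong orientation: with the paper's convention (first-class particles initially to the left), the density at the characteristic of speed $u$ is $\rho\bigl((1-q)ut,t\bigr)=\tfrac{1-u}{2}$, and this equals $\mathbb{P}(V\ge u)$, not $\mathbb{P}(V\le u)$; the CDF of a uniform variable on $[-1,1]$ is $\tfrac{1+u}{2}$. Second, even the convergence-in-probability part does not ``follow readily from hydrodynamics'': hydrodynamic limits control the empirical density field, and translating that into control of a single tagged discrepancy already requires the Ferrari--Kipnis-type identity relating the second-class particle's distribution function to expected particle-count differences, which you should state explicitly if you want the uniform law even in distribution.
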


For the multi-species ASEP, we can consider a family of color projections at each site. If we treat particle $i$ as the second class particle, projecting all stronger particles to first class particles and all weaker particles to holes, then we obtain the speed of particle $i$. As a corollary of the previous theorem, this construction yields a family of uniform random variables that are identically distributed but not independent, since they are coupled through the original multi-species system. This motivates the definition of the ASEP speed process. 

\begin{definition}[\cite{amir2011tasep}, Definition 1.3]
    Let $X\left(t\right)$ denote the multi-species ASEP, and let $X_n\left(t\right)$ be the position of the type-$n$ particle at time $t$. Then
    \[
    \lim_{t\to \infty}\frac{X_n\left(t\right)}{\left(1-q\right)t} \overset{\text{a.s.}}{=} U_n,
    \]
    where $\{U_n\}_{n \in \mathbb{Z}}$ is a family of uniform random variables on $\left[-1,1\right]$. This family is called the ASEP speed process.
\end{definition}

In \cite{amir2011tasep}, the authors proved several properties concerning the joint distribution of the speed process, including the exact joint distribution of $\left(U_0, U_1\right)$ and the fastest particle among $n$ consecutive particles. Some of these properties are surprising and peculiar; for example, the probability that the speeds $U_0$ and $U_1$ are equal is $\frac{1-q}{6}$, which is strictly positive. In this paper, we are particularly interested in one of the concepts arising from this speed process, namely, the convoy of the ASEP speed process (ASEP convoy).

\begin{definition}[\cite{amir2011tasep}, Theorem 1.8]
    The convoy of particle $0$ is the random set of indices corresponding to particles that have the same speed as particle $0$. Formally,
    \[
    \mathcal{C}_0 = \{ j \in \mathbb{Z} : U_j = U_0 \}.
    \]
    We also define the truncated convoys
    \[
    \mathcal{C}_0^n = \{ j \in [1,n] : U_j = U_0 \} \qquad \mathcal{C}_0^{\mathbb{Z}^+} = \{ j \in \mathbb{Z}^+ : U_j = U_0 \}.
    \]
\end{definition}

\begin{remark}
    All the concepts defined previously, including the configuration $X\left(t\right)$, the speed process $\{U_n\}_{n \in \mathbb{Z}}$ and the convoy sets $\mathcal{C}_0$ and $\mathcal{C}_0^n$, depend on parameter $q$. We omit the superscripts $q$ for simplicity but will write it explicitly when it is necessary, e.g. $\mathcal{C}_0^{\left(q\right)}$, $\mathcal{C}_0^{n,\left(q\right)}$.
\end{remark}

The first theorem that we prove in this work is an exact formula for the expectation of the size of the ASEP convoy. 

\begin{theorem}[Theorem \ref{thm.main}]\label{thm.1}
    Let $\# \mathcal{C}_0^n$ be the cardinality of set $\mathcal{C}_0^n$, and $E_{x}\left[A\right] := E\left[A|\frac{1+U_0} {2} = x\right]$. We have
    \begin{equation}
        \mathbb{E}_{x}\left[\# \mathcal{C}_0^n\right] = \sum_{k=0}^{n} \left(1-q\right)^{2k+1} \left(-x\left(1-x\right)\right)^{k+1} \binom{n+1}{k+1} B_k\left(1,q\right),
    \end{equation}
    where the $B_k\left(1,q\right)$ is the q-Genocchi number (see Definition \ref{Genocchi}).
\end{theorem}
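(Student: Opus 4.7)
The plan is to compute the expectation by linearity and then analyze each probability $\mathbb{P}_x(U_j = U_0)$ combinatorially via Martin's construction of the ASEP convoy. Writing
\[
\mathbb{E}_x[\#\mathcal{C}_0^n] = \sum_{j=1}^{n} \mathbb{P}_x\bigl(U_j = U_0\bigr),
\]
the task reduces to understanding, for each $j$, the probability that particle $j$ lies in the same convoy as particle $0$, conditional on $U_0 = 2x-1$.

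First I would invoke Martin's construction, which realizes the multi-species speed process as an explicit deterministic functional of an auxiliary field of independent random variables (Poisson arrivals together with $q$-biased coin flips). Under this construction, conditioning on $U_0 = 2x-1$ should be equivalent to independently marking each label $i \in \{1,\dots,j\}$ as lying to the ``left'' of $U_0$ with probability $x$ and to the ``right'' with probability $1-x$. The event $\{U_j = U_0\}$ then translates into a combinatorial event $E_j$ on the joint (labels, swap choices) picture restricted to $\{0,1,\dots,j\}$. A recursion or first-passage argument should express $\mathbb{P}_x(E_j)$ as a polynomial in $x(1-x)$ whose $k$-th coefficient carries the factor $(1-q)^{2k+1}$ together with a $q$-enumerated count of admissible patterns of ``size'' $k+1$ sitting inside $\{0,\dots,j\}$.

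Next I would swap the order of summation, exchanging the sum over $j$ with the sum over the pattern size $k$. Counting the number of embeddings of a size-$(k+1)$ pattern into an index set of length $j$ and summing the resulting binomial over $j \in \{1,\dots,n\}$ by the hockey-stick identity $\sum_{j=0}^{n}\binom{j}{k} = \binom{n+1}{k+1}$ should produce exactly the factor $\binom{n+1}{k+1}$ appearing in the target formula. The $(-1)^{k+1}$ sign should emerge either from an inclusion-exclusion step used to isolate the convoy event from its subevents, or from the alternating sign convention built into $B_k(1,q)$ itself.

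The hard part will be identifying the $q$-enumerated count of admissible patterns with the q-Genocchi number $B_k(1,q)$. To achieve this I would derive a recursion for the probabilistic count by conditioning on the first swap, or on the identity of the strongest particle that ever interacts with label $0$, and then match it with the Gandhi/Dumont-type recursion characterizing $B_k(1,q)$ (Definition~\ref{Genocchi}); equivalently, I would try to build an explicit bijection between the admissible patterns and Dumont permutations (or the associated surjective staircase functions) carrying the appropriate $q$-weighted statistic. Establishing this combinatorial identification is the conceptual heart of the argument and is what bridges the probabilistic convoy problem with the q-Genocchi combinatorics that had previously appeared independently of probability.
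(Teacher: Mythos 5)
Your skeleton---linearity over $j$, Martin's construction to express $\mathbb{P}_x\left(U_j=U_0\right)$, a hockey-stick resummation giving $\binom{n+1}{k+1}$, and finally an identification of a $q$-weighted count with $B_k\left(1,q\right)$---runs parallel to the paper's route, which reduces $\mathbb{E}_x\left[\#\mathcal{C}_0^n\right]$ to $\mathbb{E}_x\left[Q_{n+1}-Q_1\right]=c\sum_{i=1}^{n}\mathbb{E}_x\left[q^{Q_i}\right]$ with $c=x\left(1-x\right)$ via the coupling and tower property; indeed $\mathbb{P}_x\left(j\in\mathcal{C}_0\right)=c\,\mathbb{E}_x\left[q^{Q_j}\right]$ is exactly your per-index probability, and the binomial step matches the paper's use of $\sum_{i=k}^{n-1}\binom{i}{k}=\binom{n}{k+1}$. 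But the proposal has a genuine gap at what you yourself call its conceptual heart: you never establish that this probability is a polynomial in $c$ whose $k$-th coefficient is $\left(1-q\right)^{2k+1}$ times a $q$-count, nor that the count equals $B_k\left(1,q\right)$; both claims are carried by ``should.'' One concrete obstruction: the natural recursion for $\mathbb{E}_x\left[q^{Q_n}\right]$ does not close on a single quantity. Conditioning on one step of the queue gives $\mathbb{E}_x\left[q^{kQ_{n+1}}\right]=\left(1+ca_k\right)\mathbb{E}_x\left[q^{kQ_n}\right]+cb_k\,\mathbb{E}_x\left[q^{\left(k+1\right)Q_n}\right]$ with $a_k=q^k+q^{-k}-2$, $b_k=1-q^{-k}$, so one must control the entire hierarchy of moments simultaneously, together with the initial data $\mathbb{E}_x\left[q^{kQ_1}\right]=\left(q;q\right)_k$ coming from the specific conditional law of $Q_1$. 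Your ``independent marking'' description also overstates what conditioning on $U_0$ gives: only the auxiliary arrival/service variables in Martin's construction are independent site by site, not the relative-order indicators of the speeds themselves.

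The paper resolves the hard step not by a bijection with Dumont permutations or by matching the Gandhi recursion, but analytically: an induction on the moment hierarchy yields an expression in complete homogeneous symmetric functions $h_{n-k}\left(1+ca_1,\dots,1+ca_{k+1}\right)$, which is expanded by the binomial formula for $h$'s and then recognized as $B_k\left(1,q\right)$ through the explicit alternating formula extracted from the Han--Zeng generating function. (A combinatorial interpretation via surjective pistols appears in the paper only later, and only to evaluate $B_k\left(1,0\right)$.) Until you either carry out the bijection/recursion match you propose or reproduce something like this moment-hierarchy computation, the identification with the q-Genocchi numbers---and hence the theorem---remains unproved.
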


The second main theorem of this paper concerns an asymptotic property of the ASEP convoy. It is worth mentioning that the appearance of a universal constant is surprising for the following reason. In \cite{amir2011tasep}, the authors showed by exact computation that, from a local perspective, smaller values of $q$ correspond to larger convoy sizes. This agrees with intuition: for smaller $q$, stronger particles exert a stronger blocking effect on weaker particles, making it more likely that several particles share the same velocity by being blocked by the same stronger particle. In contrast, we prove that when the number of particles with the same velocity is considered over a large interval, it grows on the order of $\sqrt{n}$ with a constant independent of $q$. In other words, the asymptotic size of the convoy is universal. 

\begin{theorem}[Theorem \ref{thm.uni}]\label{thm.2}
     The asymptotic expected size of the ASEP convoy of particle $0$ is of order $\sqrt{n}$ and is universal, that is, independent of $q$. More precisely,
    \begin{equation}
        \lim_{n \to \infty}\frac{1}{\sqrt{n}}\mathbb{E}_x\left[\# \mathcal{C}_0^n\right] = \sqrt{\frac{4x\left(1-x\right)}{\pi}}.
    \end{equation}
    Moreover, without conditioning on the speed of particle $0$, We have
    \begin{equation}
        \lim_{n \to \infty}\frac{1}{\sqrt{n}}\mathbb{E}\left[\# \mathcal{C}_0^n\right] = \frac{\sqrt{\pi}}{4}.
    \end{equation}
\end{theorem}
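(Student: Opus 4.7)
The plan is to bypass the $q$-Genocchi generating function of Theorem~\ref{thm.1} entirely and instead study $\mathbb{E}_x[\#\mathcal{C}_0^n]$ through Martin's convoy construction together with its orthogonal-polynomial representation. Writing
\[
\mathbb{E}_x[\#\mathcal{C}_0^n] = \sum_{j=1}^n \mathbb{P}_x\bigl(U_j = U_0\bigr),
\]
I would use Martin's construction to realize each summand as a return/absorption probability for an explicit birth--death chain on $\mathbb{Z}_{\ge 0}$, parametrized so that $x = (1+U_0)/2$ enters through the initial data and $q$ enters only through the one-step transition rates. The three-term recurrence of this chain is designed to match the Askey--Wilson polynomials with parameters $(a,b,c,d)=(1,0,0,0)$, which is the key structural input.

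Karlin--McGregor then gives a spectral formula
\[
\mathbb{P}_x\bigl(U_j = U_0\bigr) = \int_{-1}^{1} y^j\, K(y;x,q)\, w_q(y)\, dy
\]
for an explicit kernel $K$ built from the Askey--Wilson polynomials evaluated at a point parametrized by $x$, with $w_q$ the corresponding orthogonality weight on $[-1,1]$. Summing the geometric sum $\sum_{j=1}^n y^j = y(1-y^n)/(1-y)$ collapses the truncated convoy expectation to a single integral against $w_q$. As $n\to\infty$, the factor $y^n$ localizes the integrand near the spectral edge $y=1$, where $w_q(y)\sim c_q\sqrt{1-y}$; a Laplace expansion at the edge then produces the advertised $\sqrt{n}$ growth with a constant of the form $c(x,q)/\sqrt{\pi}$, the shape $\sqrt{x(1-x)}$ arising from the value of the normalized kernel $K(\,\cdot\,;x,q)$ at $y=1$.

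\emph{The main obstacle—and the place where the genuinely surprising content of Theorem~\ref{thm.2} lives—is showing that all the $q$-dependent prefactors coming from $K$ and from $w_q$ cancel exactly in this edge limit}, leaving the universal constant $c(x,q) = 2\sqrt{x(1-x)}$. I expect this to reduce to a direct evaluation of the Askey--Wilson weight's square-root coefficient at $(1,0,0,0)$ against the asymptotic normalization of the Askey--Wilson polynomials at the edge, a computation that should yield a clean $q$-independent answer because the parameters $(1,0,0,0)$ place the model at a degenerate locus of the Askey--Wilson scheme. Along the way I would also extract a $q$- and $n$-uniform bound on $\frac{1}{\sqrt{n}}\mathbb{E}_x[\#\mathcal{C}_0^n]$ from the same integral representation; this is routine but indispensable for the second half of the theorem.

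For the unconditional statement, $U_0$ is uniform on $[-1,1]$, so $x=(1+U_0)/2$ is uniform on $[0,1]$. Using the uniform bound just mentioned, dominated convergence allows us to interchange the $x$-integral and the $n$-limit, giving
\[
\lim_{n\to\infty}\frac{1}{\sqrt{n}}\mathbb{E}[\#\mathcal{C}_0^n] = \int_0^{1}\sqrt{\frac{4x(1-x)}{\pi}}\,dx = \frac{2}{\sqrt{\pi}}\cdot\frac{\pi}{8} = \frac{\sqrt{\pi}}{4},
\]
using the standard evaluation $\int_0^1\sqrt{x(1-x)}\,dx = \pi/8$ (the area of a semidisk of radius $1/2$).
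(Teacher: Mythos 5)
Your proposal is sound and, while it rests on the same structural inputs as the paper (Martin's construction, the Karlin--McGregor representation for the queue-length birth--death chain, and edge asymptotics of the big $q$-Hermite weight and polynomials), it organizes the argument differently. The paper first proves a local limit theorem showing $Q_{n+1}/\sqrt{n}\Rightarrow|\mathcal{N}(0,2c)|$ and then obtains the expectation via the coupling identity $\mathbb{E}_x[\#\mathcal{C}_0^n]=\mathbb{E}_x[Q_{n+1}-Q_1]$; you instead attack the expectation directly through $\mathbb{E}_x[\#\mathcal{C}_0^n]=\sum_{j=1}^n\mathbb{P}_x(j\in\mathcal{C}_0)$, which by the construction equals $c\sum_{j=1}^n\mathbb{E}_x[q^{Q_j}]$ with $c=x(1-x)$ (this is exactly the paper's identity \eqref{tower}, used there only for the exact $q$-Genocchi formula), and then sum the geometric series inside the spectral integral. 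Your route has the advantage of avoiding the passage from convergence in distribution to convergence of means; its cost is that the kernel $K$ is a double sum over the initial law $\pi$ and the weights $q^k/(q;q)_k$ against the polynomials $P_k$, so you must control this sum uniformly on the spectral support. The cancellation you defer is concrete and does work out: at the edge one has $P_k(1)=1$ for all $k$, so $K(1)=c\sum_{k\ge0}q^k/(q;q)_k=c/(q;q)_\infty$ by Euler's $q$-exponential identity, and this $(q;q)_\infty^{-1}$ cancels against the weight's edge constant $\lim_{\epsilon\to0}\sqrt{\epsilon}\,w(1-\epsilon)=\sqrt{2}(q;q)_\infty/\pi$; combined with $\int_0^\infty(1-e^{-cu^2})u^{-2}\,du=\sqrt{\pi c}$ this yields $2\sqrt{c/\pi}$ as claimed. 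Two caveats: the uniform bounds needed to justify localization at the edge are not entirely routine, since the polynomials are unbounded at the left endpoint (the paper's Section 4 establishes a uniform bound only on compact subsets of $(-1,1]$ via singularity analysis of the generating function, plus a crude $4^k$ bound to kill the contribution from $z\in[-1,0]$); and for the unconditional statement your dominated-convergence step indeed requires the $q$- and $n$-uniform bound you mention, which the paper itself glosses over.
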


Not only does the normalized expected size of the convoy of ASEP converge to a constant, but in the special case $q = 0$ (corresponding to TASEP), the normalized convoy size, as a sequence of random variables, also converges in distribution to a folded normal random variable. This constitutes the third main result of our work.

\begin{theorem}[Theorem \ref{thm.normal}]\label{thm.3}
Consider TASEP ($q=0$). Conditionally on the event $U_0 = u$ with $x = \tfrac{1+u}{2}$, the asymptotic size of the convoy of particle $0$ satisfies
\begin{equation}
    \frac{\# \mathcal{C}_0^{n,\left(0\right)}}{\sqrt{n}}
    \Big|_{U_0=u} \xrightarrow[n \to \infty]{d}
    \bigl|\mathcal{N}\left(0,2x\left(1-x\right)\right)\bigr|,
\end{equation}
where $\mathcal{N}\left(0,2x\left(1-x\right)\right)$ denotes a normal random variable with mean $0$ and variance $2x\left(1-x\right)$. 
\end{theorem}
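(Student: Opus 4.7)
Plan: The plan is to leverage Martin's construction of the convoy — which, as deployed in the proof of Theorem \ref{thm.2}, already identifies $\#\mathcal{C}_0^n$ with a zero-return count of an explicit lattice random walk — and then to upgrade the convergence of means in Theorem \ref{thm.2} to full distributional convergence in the $q=0$ regime via the invariance principle for random-walk local time.

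First I would recall the random-walk representation that underlies Theorem \ref{thm.2}. Conditional on $U_0 = u$ with $x = (1+u)/2$, Martin's construction should realize an auxiliary lattice random walk $(S_k)_{k \ge 0}$, with $S_0 = 0$ and iid integer-valued increments whose law depends only on $(q,x)$, such that the set of zero-returns of $S$ in $[1,n]$ coincides pathwise with $\mathcal{C}_0^n$. For general $q \in [0,1)$ the transition kernel of this walk is diagonalized by the Askey–Wilson polynomials with parameters $(a,b,c,d) = (1,0,0,0)$; specializing to $q = 0$, the step distribution degenerates to an explicit one with mean zero and variance determined by $x$, which is precisely what makes the TASEP case tractable at the distributional (not merely first-moment) level.

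Second, with this representation in hand, distributional convergence follows from a two-stage limit theorem. Donsker's invariance principle gives $S_{\lfloor nt\rfloor}/\sqrt{n} \Rightarrow \sigma B_t$, a Brownian motion with diffusion coefficient $\sigma$ depending on $x$. The classical local-time invariance principle (e.g. via Knight's theorem for simple random walk, or more generally convergence of the occupation measure at $0$) then yields $\#\mathcal{C}_0^{n,(0)}/\sqrt{n} \xrightarrow{d} L_1^{\sigma B}(0)$. By Lévy's identity, the local time at level zero at time one of $\sigma B$ has the distribution of $|\sigma B_1|$, a folded normal. Matching the mean of this folded normal against the universal constant $\sqrt{4x(1-x)/\pi}$ from Theorem \ref{thm.2} forces the variance parameter to be exactly $2x(1-x)$, yielding the claimed limit.

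The principal obstacle is the first step — realizing $\#\mathcal{C}_0^{n,(0)}$ pathwise as the zero-return count of an iid-increment random walk. This requires tracing Martin's coupling carefully through the TASEP specialization and verifying genuine independence of the increments after conditioning on $U_0$, rather than merely a marginal identity of counts. A secondary technical wrinkle is that the zero-return-count functional is not continuous in the Skorohod $J_1$ topology, so continuous mapping does not apply directly; the convergence instead requires a dedicated local-time invariance theorem. If that route proves delicate, a cleaner alternative is the method of moments: compute $\mathbb{E}_x\left[(\#\mathcal{C}_0^{n,(0)})^k\right]/n^{k/2}$ for each $k \ge 1$ directly from the random-walk representation, reusing the Askey–Wilson orthogonality at $q = 0$, and verify convergence to the moments of $|\mathcal{N}(0, 2x(1-x))|$, which uniquely determine the folded normal distribution.
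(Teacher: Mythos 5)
Your overall strategy is the one the paper uses: reduce the convoy count to a local-time-type functional of a mean-zero lattice walk via Martin's construction, invoke the invariance principle for random-walk local time (the paper cites Borodin's theorem rather than Knight's), and finish with L\'evy's identity identifying Brownian local time at zero with a folded normal. The one step you flag as the principal obstacle is indeed where your sketch, as written, is not quite right: the convoy is \emph{not} pathwise the set of zero-returns of an iid-increment walk. What Martin's construction gives at $q=0$ is the inhomogeneous queue process $Q_i$, which the paper couples to a homogeneous lazy walk $P_i$ (up/down each with probability $c=x(1-x)$) and folds via $Q_n \overset{d}{=} |P_n+\tfrac12|-\tfrac12$; under this identification, $i\in\mathcal{C}_0^n$ corresponds to $P$ traversing the edge $\{0,-1\}$ in either direction at step $i$, so $\#\mathcal{C}_0^n=D_n+E_n$, a count of edge-crossings, not of visits to $0$. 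Passing from $D_n+E_n$ to $2c\,L_n(0)$ requires an extra concentration step: the paper shows $D_n-cL_n(0)$ and $E_n-cL_n(0)$ are (essentially) martingales with second moment $O(\mathbb{E}[L_n(0)])=O(\sqrt n)$, hence negligible at scale $\sqrt n$. With that repair your argument closes exactly as the paper's does; note also that the variance $2x(1-x)$ then comes out directly (the walk $P$ has step variance $2c$, and the crossing count concentrates around $2c\,L_n(0)$), so there is no need for the slightly circular step of matching means against Theorem \ref{thm.2}. Your fallback via moments and Askey--Wilson orthogonality at $q=0$ is not what the paper does here, and would be considerably heavier.
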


It is natural to generalize this previous result to the case of ASEP, i.e. the size of the convoy as a sequence of random variables  converges to the same distribution for varying $q$. However, we can not prove it within the method that we developed in this paper. We formulate it as a conjecture. 
\begin{conjecture}
     Conditionally on the event $U_0 = u$ with $x = \tfrac{1+u}{2}$, the size of the ASEP convoy is asymptotically a universal folded normal random variable. More specifically, we have
     \begin{equation}
    \frac{\# \mathcal{C}_0^{n,\left(q\right)}}{\sqrt{n}}
    \Big|_{ U_0 =u }\xrightarrow[n \to \infty]{d}
    \bigl|\mathcal{N}\left(0,2x\left(1-x\right)\right)\bigr|,
\end{equation}
\end{conjecture}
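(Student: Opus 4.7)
The plan is to establish the conjecture via the method of moments. Since the target folded normal $|\mathcal{N}(0,2x(1-x))|$ has Gaussian tails and is therefore determined by its moments, it suffices to prove
\[
\lim_{n \to \infty} \frac{1}{n^{k/2}} \mathbb{E}_x\bigl[\bigl(\# \mathcal{C}_0^{n,(q)}\bigr)^k\bigr] = \mathbb{E}\bigl[|\mathcal{N}(0,2x(1-x))|^k\bigr]
\]
for every integer $k \geq 1$ and every $q \in [0,1)$. The case $k = 1$ is exactly Theorem \ref{thm.2}, and the case $q = 0$ for all $k$ is Theorem \ref{thm.3}; the task is to merge these two directions and promote the universality from the first moment to all moments.

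First I would expand the $k$-th moment as
\[
\mathbb{E}_x\bigl[\bigl(\# \mathcal{C}_0^{n,(q)}\bigr)^k\bigr] = k! \sum_{1 \leq i_1 < \cdots < i_k \leq n} \mathbb{P}_x\bigl(U_0 = U_{i_1} = \cdots = U_{i_k}\bigr) + O\bigl(n^{(k-1)/2}\bigr),
\]
where the remainder absorbs tuples with repeated indices. Next I would use Martin's construction of the ASEP convoy to derive a product-type formula for the $k$-point probability, generalizing the one-point identity behind Theorem \ref{thm.1}. Under that construction the event $\{U_0 = U_{i_1} = \cdots = U_{i_k}\}$ translates, through successive color projections, into a sequence of coincidences in nested multiline queues, which one hopes can be written as a product of transition kernels of the same reflected $q$-weighted random walk on $\mathbb{Z}_{\geq 0}$ used in the proof of Theorem \ref{thm.2}.

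Once such a product formula is in place, I would substitute the Askey-Wilson $(a,b,c,d)=(1,0,0,0)$ spectral representation of the transition kernel, rescale $i_j = \lfloor s_j \sqrt{n} \rfloor$, and pass to the limit. Since the Askey-Wilson parameters are $q$-independent and $q$ enters the walk only through an overall time-rescaling, the leading-order asymptotics should factor into Gaussian heat-kernel densities in the $s_j$-variables with no residual $q$-dependence; the Riemann sum then converges to the corresponding $k$-point integral for a reflected Brownian random variable evaluated at one time, which equals $\mathbb{E}\bigl[|\mathcal{N}(0,2x(1-x))|^k\bigr]$. This mirrors precisely the universality mechanism already visible at $k = 1$.

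The hard part will be the joint product formula for general $q$. For TASEP the Ferrari-Martin multiline queues give an explicit combinatorial description in which equalities of several speeds decouple cleanly, and Theorem \ref{thm.3} then reduces to a moment computation for a reflected simple random walk. For $q > 0$, however, the multiline queues carry random $q$-weights, and enforcing several simultaneous speed ties becomes combinatorially delicate: the natural generating function (related to the $q$-Genocchi numbers of Theorem \ref{thm.1}) is already ill-behaved for the one-point problem, and its $k$-point analogue is unlikely to be tractable directly. Bypassing this obstruction will almost certainly require a new ingredient — for instance, a coupling between the $q > 0$ and $q = 0$ convoys that is accurate at the $\sqrt{n}$ scale, or an alternative realization of the ASEP convoy (via $q$-deformed queues, or the stationary measures of $q$-Hahn type systems) in which $k$-point coincidence events still factor through a reflected Markov chain. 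Either route would complete the passage from Theorem \ref{thm.2} to the full distributional universality.
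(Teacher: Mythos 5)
The statement you are addressing is stated in the paper as an open conjecture — the author explicitly says it cannot be proved with the methods developed there — so there is no paper proof to match, and the only question is whether your proposal actually closes the gap. It does not: it is a strategy outline whose central step is missing, as you yourself acknowledge. The method of moments requires, for every $k\ge 2$ and every $q\in[0,1)$, an asymptotic evaluation of the $k$-point coincidence probabilities $\mathbb{P}_x(U_0=U_{i_1}=\cdots=U_{i_k})$, and you do not supply the ``product-type formula'' on which everything else rests; you only express the hope that one exists and note that even the $k=1$ generating function (the $q$-Genocchi series) is already intractable. A proof cannot rest on an ingredient that ``will almost certainly require'' something new. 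There are also two unexamined technical points: (i) even granting a spectral representation for the $k$-point kernel, you would need uniform-in-$n$ moment bounds to justify the method of moments, and nothing in the paper's machinery (which controls only the first moment via $\mathbb{E}_x[\mathcal{U}_n]=\mathbb{E}_x[Q_{n+1}-Q_1]$) provides these; (ii) your claim that ``$q$ enters the walk only through an overall time-rescaling'' is not accurate — $q$ enters through the inhomogeneous reflection rates $1-q^k$ near the origin, and it is precisely this boundary inhomogeneity that breaks the $q=0$ lumping identity $Q_n=|P_n+\tfrac12|-\tfrac12$ on which Theorem \ref{thm.3} relies.

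A more promising route within the paper's own framework, which your proposal does not mention, is the following. The paper already proves (Theorem \ref{thm.uni}) that $Q_{n+1}/\sqrt{n}$ converges in distribution to $|\mathcal{N}(0,2c)|$ for \emph{every} fixed $q\in[0,1)$; the obstruction is that $\#\mathcal{C}_0^n\overset{(d)}{=}Q_{n+1}-P_{n+1}$ under the coupling, and the marginal limits of $Q_{n+1}$ and $P_{n+1}$ do not determine the limit of their difference. So the missing ingredient is really the \emph{joint} scaling limit of $(Q_{n+1},P_{n+1})$ under the explicit coupling — equivalently, a functional limit theorem identifying $Q$ as the Skorokhod reflection of $P$ up to $o(\sqrt{n})$ errors. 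If you want to pursue the conjecture, quantifying how far the $q$-inhomogeneous reflection deviates from exact reflection (e.g., showing the number of blocking events at levels $k\ge 1$ is $o(\sqrt{n})$) seems closer to a workable plan than a $k$-point moment computation.
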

We further support the conjecture with numerical simulations. We run simulations for various $q$ for the particles with label in $\left[1,10000\right]$, for each $q$ we simulate 10000 times, and we set the speed of particle $0$ as $0$ (i.e. $x =\frac{1}{2}$) the histograms are the following.

 \begin{figure}[htbp]
        \centering
        \begin{minipage}{0.48\textwidth}
            \centering
            \includegraphics[width=\linewidth]{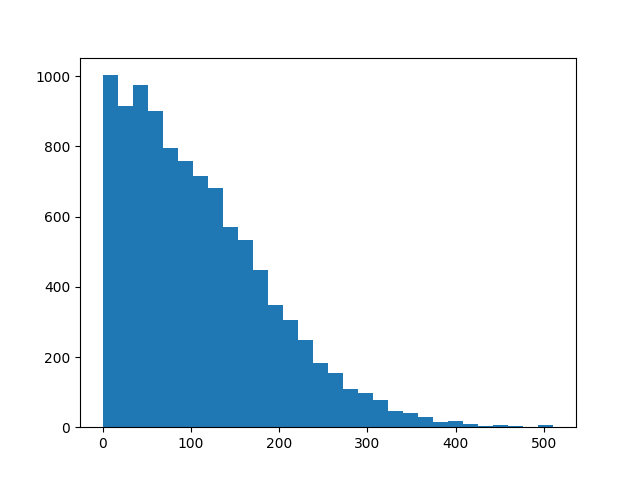}
        \caption{Size of ASEP convoy $q = 0$}
        \end{minipage}
        \begin{minipage}{0.48\textwidth}
            \centering
            \includegraphics[width=\linewidth]{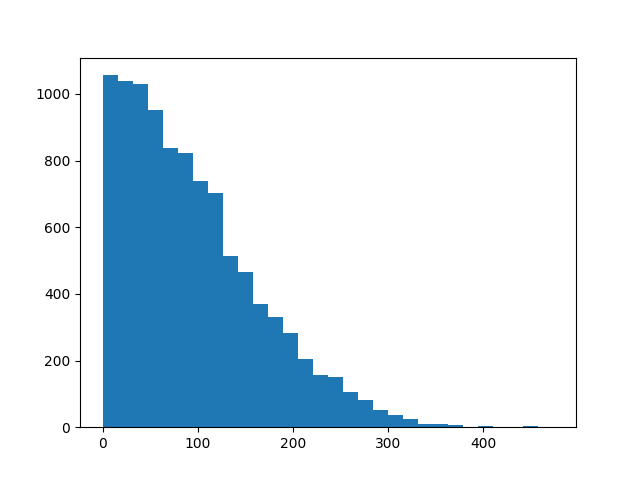}
        \caption{$q = 0.5$}
        \end{minipage}
    \end{figure}

In addition to the fixed-$q$ case, we also investigate the size of the ASEP convoy when $q \to 1$ as $n \to \infty$. The motivation for studying this scaling regime is the following. From our first result, convoy exists at scale $\sqrt{n}$ for any fixed $q \in [0,1)$. If we take $q = 1$, the process degenerates to a symmetric simple exclusion process, in which the interaction between particles disappears; hence the convoy does not exist when $q = 1$. Thus, it is natural to investigate the transition regime in which $q_n \to 1$ as $n \to \infty$. Letting $\gamma \to 0$ and $\gamma \to \infty$ respectively, our result implies that when $q \to 1$ sufficiently slowly, the expectation of convoy set has the same size as the fixed-$q$ case, whereas when $q \to 1$ fast, the convoy disappears at the scale of $\sqrt{n}$. 

\begin{theorem}[Theorem \ref{weakly}]
Let the speed of particle $0$ be $u$, and define
\[
c = \frac{1-u^{2}}{4}, 
\qquad 
q_{n} = e^{-\tfrac{\gamma}{\sqrt{n}}}.
\]
Then
\[
\lim_{n \to \infty}
\mathbb{E}\left[
    \frac{\# \mathcal{C}_0^{n,\left(q_n\right)}}{\sqrt{n}}
    \middle|
    U_0 = u
\right]
= \mathbb{E}\left[Y^\gamma - X^\gamma\right].
\]

$\left(X^\gamma,Y^\gamma\right)$ is a random vector with joint density
\begin{equation}\label{jointdensity}
f^{\gamma}_{X,Y}\left(x,y\right) = \frac{1}{2\pi}e^{-\gamma x}\int_{0}^{\infty}e^{-c w^{2}}\frac{|\Gamma\left(iw/\gamma\right)|^2}{|\Gamma\left(2iw/\gamma\right)|^2}\mathcal{J}\left(x,w\right)\mathcal{J}\left(y,w\right)dw,
\end{equation}
and the function $\mathcal{J}\left(z,u\right)$ is given by
\[
\mathcal{J}\left(z,w\right)
= 2\Re\left[
    \bigl(\gamma e^{\gamma z}\bigr)^{\tfrac{i w}{\gamma}}
    \frac{\Gamma\left(2iw/\gamma\right)}{\Gamma\left(iw/\gamma\right)}
    {}_1F_{1}\left(
        1 - \tfrac{iw}{\gamma};
        1 - \tfrac{2iw}{\gamma};
        -\tfrac{1}{\gamma} e^{-\gamma z}
    \right)
\right].
\]
\end{theorem}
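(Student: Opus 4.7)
The plan is to extend the Askey--Wilson / Martin-construction approach underlying the proofs of Theorems \ref{thm.uni} and \ref{thm.normal} to the double-scaling regime $q_n=e^{-\gamma/\sqrt n}$. In the fixed-$q$ analysis, conditioning on $U_0=u$ reduces $\#\mathcal{C}_0^{n,(q)}$ to the difference $Y_n-X_n$ of two random indices $X_n\le Y_n$ produced by Martin's multi-line construction, whose joint distribution admits a Plancherel-type representation built from the Askey--Wilson polynomials with parameters $(a,b,c,d)=(1,0,0,0)$, weighted by an $n$-step exponential factor. I would first re-derive this spectral representation verbatim for $q_n\in(0,1)$ before taking the limit, so that the only $q$-dependence enters through the Askey--Wilson weight, the normalized polynomials, and the step factor.

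The core computation is the scaling limit of this spectral representation. Using the substitutions $q_n=e^{-\gamma/\sqrt n}$, $w\mapsto w/\sqrt n$, $X_n=\lfloor x\sqrt n\rfloor$, $Y_n=\lfloor y\sqrt n\rfloor$, standard $q$-Pochhammer asymptotics collapse the Askey--Wilson orthogonality weight to the Plancherel measure $|\Gamma(iw/\gamma)|^2/|\Gamma(2iw/\gamma)|^2$ appearing in (\ref{jointdensity}); the confluent limit of the Askey--Wilson polynomials (Koekoek--Lesky--Swarttouw) yields the ${}_1F_1$-based function $\mathcal{J}(z,w)$; and the $n$-step factor produces $e^{-cw^2}$ with $c=(1-u^2)/4$ via $\log\cos(w/\sqrt n)=-w^2/(2n)+O(n^{-2})$ together with the identity $4x(1-x)=1-u^2$. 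The remaining prefactor $e^{-\gamma x}$ is picked up from the surviving asymptotic normalization of the polynomials $p_k^{(q_n)}$ at $k=\lfloor x\sqrt n\rfloor$. Matching these four contributions reproduces the joint density $f^{\gamma}_{X,Y}$ in (\ref{jointdensity}), and hence the rescaled pair $(X_n,Y_n)/\sqrt n$ converges weakly to $(X^\gamma,Y^\gamma)$.

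The hard part will be upgrading this distributional convergence to convergence of the expectation $\mathbb{E}[(Y_n-X_n)/\sqrt n]$. The graphical coupling of the multi-species ASEP with respect to $q$ does not monotonically control the convoy in the direction we need (smaller $q$ gives more blocking but the coupling is not order-preserving on the convoy), so I would instead establish uniform integrability directly from the spectral representation. Specifically, I would prove a tail estimate $\mathbb{P}((Y_n-X_n)/\sqrt n > T)\le C e^{-c_0 T^2}$ uniformly in $n$ by decomposing the $w$-integral dyadically: the Gaussian factor $e^{-cw^2}$ (or its finite-$n$ analogue coming from the step factor) kills the high-frequency contribution, while the oscillatory factors inside $\mathcal{J}(z,w)$ provide cancellation on the low-frequency range through an integration-by-parts (stationary-phase-type) argument in $w$. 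As a consistency check, I would verify that $\mathbb{E}[Y^\gamma-X^\gamma]\to\sqrt{\pi}/4$ as $\gamma\to\infty$, matching Theorem \ref{thm.2}, and $\mathbb{E}[Y^\gamma-X^\gamma]\to 0$ as $\gamma\to 0$, matching the physical expectation that the convoy disappears on the $\sqrt n$ scale when $q\to 1$ sufficiently quickly.
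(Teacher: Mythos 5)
Your overall route is the same as the paper's: reduce the convoy size to queue lengths via Martin's construction and the coupling identity $\mathbb{E}_x[\mathcal{U}_n]=\mathbb{E}_x[Q_{n+1}-Q_1]$, represent the transition probabilities of the birth--death chain $Q$ through the Karlin--McGregor formula with the $(a,b,c,d)=(1,0,0,0)$ Askey--Wilson (continuous big $q$-Hermite) polynomials, and take the double-scaling limit of the weight, the polynomials, and the $n$-step factor to produce $|\Gamma(iw/\gamma)|^2/|\Gamma(2iw/\gamma)|^2$, $\mathcal{J}$, and $e^{-cw^2}$ respectively. One correction to your plan: the spatial scaling $X_n=\lfloor x\sqrt n\rfloor$, $Y_n=\lfloor y\sqrt n\rfloor$ is wrong as stated. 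Under $q_n=e^{-\gamma/\sqrt n}$ the initial law $\pi_k\propto q_n^k/(q_n;q_n)_k$ concentrates at height $\tfrac{1}{\gamma}\sqrt n\log\sqrt n$, so both queue lengths must be centered by $\lfloor\tfrac{1}{\gamma}\sqrt n\log\sqrt n\rfloor$ before dividing by $\sqrt n$ (this is the paper's $m_n(z)$ in \eqref{standardnotation}); without that centering $X_n/\sqrt n\to+\infty$ in probability and the claimed weak convergence to the density \eqref{jointdensity} (whose marginals are supported on all of $\mathbb{R}$) fails. The centering cancels in the difference $Y^\gamma-X^\gamma$, so your final answer survives, but the intermediate limit statement and the derivation of the prefactor $e^{-\gamma x}$ both require it. It is also this logarithmic shift, not merely "the surviving normalization of the polynomials," that produces the Gumbel-type factors $\exp(\pm\tfrac{1}{\gamma}e^{-\gamma z})$ which cancel between the initial law and the polynomial asymptotics.

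On the other hand, you are more careful than the paper on the last step: the paper passes from the joint weak convergence \eqref{weaklyjoint} to convergence of $\mathbb{E}[(Q_{n+1}-Q_1)/\sqrt n]$ without justifying uniform integrability, whereas you correctly identify this as the hard part and propose a uniform tail bound. A Gaussian tail uniform in $n$ may be more than is needed; since $\mathbb{E}[\mathcal{U}_n]=c\sum_{i=1}^n\mathbb{E}[q_n^{Q_i}]$ by \eqref{tower}, and $Q_{n+1}-Q_1$ decomposes as the nonnegative $\mathcal{U}_n$ plus a mean-zero homogeneous walk increment, a uniform second-moment bound obtained from the spectral representation (or from the explicit formula of Theorem \ref{thm.main}) would already give the required uniform integrability. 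Your consistency checks at $\gamma\to 0$ and $\gamma\to\infty$ coincide with the proposition the paper proves after the theorem.
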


\subsection{Related work}
The TASEP speed process was introduced and studied by Amir, Angel, and Valko in \cite{amir2011tasep}. The speed process was used to characterize all the ergodic stationary measures of TASEP with finitely many types of particles. The more general ASEP speed process was also considered in that work, although all the results were based on the assumption of the existence of the second class particle in the ASEP with step initial condition. This was not proved until recently by Aggarwal-Corvin-Ghosal \cite{aggarwal2023asep}, using methods from integrable probability. Speed processes have been also studied for other integrable models; see, for example \cite{amir2021tazrp, drillick2024stochastic}.

The convoy of the ASEP speed process was defined in the same paper \cite{amir2011tasep} as the speed process. The authors obtained a combinatorial description of the convoy set in terms of a renewal process. The combinatorial construction was later generalized to ASEP using a different approach based on a queuing process in \cite{martin2020stationary}. The proofs of the convoy construction rely on combinatorial results about the stationary distributions of finite type multi-species (T)ASEP \cite{ferrari2007stationary, martin2020stationary}, which are constructed via multi-line queuing processes. 

The connection between convoys and stationary distributions in finite multi-species ASEP arises from a fundamental symmetry property. This property, first studied in the context of understanding the algebraic origins of interacting particle systems \cite{amir2011tasep, angel2009oriented, borodin2021color}, was formalized in a unified framework using Hecke algebra by Bufetov \cite{bufetov2020interacting}. In particular, it was shown in \cite{bufetov2020interacting} that the distribution of a multi-type ASEP configuration coincides with that of its inverse permutation. This symmetry makes it possible to reinterpret probability events to describe the convoy in terms of the hydrodynamic behavior of second class particles. Indeed, by projecting the multi-type ASEP onto simpler systems with step initial conditions and applying well-known hydrodynamic limit results for ASEP \cite{rost1981non}, convoy probabilities can be expressed as the probability of observing a second class particle at a given position in a coupled system.

Despite the elegant combinatorial construction of convoys, research focusing on convoys themselves remains limited. We highlight two applications of convoys in related areas. Convoys appear in the study of mixing times for the open-boundary TASEP. In \cite{schmid2023mixing}, the author used the size of the TASEP convoy to provide intuition for the mixing time of the open boundary TASEP in the maximal current phase, showing that it is of order $N^{3/2}$. By the same heuristic argument presented in \cite[Section 1.3]{schmid2023mixing} our results also provide insight into other mixing time problems. Firstly, our result on the asymptotic size of ASEP convoy supports Conjecture 1.4 in \cite{schmid2023mixing}, which predicts that the mixing time of the asymmetric simple exclusion process in the maximal current phase is of order $N^{3/2}$. In addition, our result for the critical scaling $q = 1 - \gamma/\sqrt{n}$ can serve as intuition for the mixing time order $N^{3/2 + \kappa}$ for the open ASEP at the triple point \cite{ferrari2025mixing}, where the authors consider $q = 1 - n^{\kappa}$ for $\kappa \in [0,1/2]$. 

In a series of works \cite{busani2024diffusive, busani2022scaling, busani2024stationary}, the authors proved that the TASEP speed process converges weakly to the stationary horizon under an appropriate scaling. As a consequence, the convoys can be used to characterize the Busemann difference profiles of the directed landscape, a key universal object in the KPZ universality class introduced in \cite{dauvergne2022directed}. Later, \cite{aggarwal2024scaling} established the convergence of the ASEP speed process to the stationary horizon under a weaker topology as a corollary of their main theorem. The universality of the scaling limits of the ASEP speed process heuristically suggests that the size of the convoy set is likewise universal, although smaking this implication precise is technically subtle.

Our proofs, both for the expected size of the convoy for fixed $q\in [0,1)$ and for the critical scaling regime, build on orthogonal polynomials techniques. To the best of our knowledge, this connection between the convoy set and orthogonal polynomials has not previously appeared in the literature. However, the orthogonal polynomial methods have a long history in the study of models in the KPZ universality class, particularly in the context of the ASEP. In fact, the stationary measures of ASEP can be described via the Matrix Product Ansatz \cite{derrida1993exact}, which has motivated extensive subsequent work. Many different explicit solutions to the Ansatz have been developed \cite{derrida2004asymmetric, uchiyama2004asymmetric}, often involving representations in terms of tridiagonal matrices. The entries of these matrices are connected to the coefficients appearing in the three-term recurrence relations of various families of orthogonal polynomials, such as the Askey–Wilson polynomials \cite{askey1985some}. For further developments in this direction, see \cite{barraquand2023stationary, bryc2010askey, bryc2025limits, bryc2024limit, derrida2004asymmetric}.

The scaling regime that we consider for $q \to 1$ also appears in other studies of ASEP, see, for example, \cite{bertini1997stochastic, corwin2024stationary, corwin2018open, ferrari2025mixing} often referred to as the weakly asymmetric simple exclusion process (WASEP). Notably, a key breakthrough was achieved by Bertini and Giacomin \cite{bertini1997stochastic}, who proved that under weak asymmetry scaling the fluctuation fields converge to the stochastic Burgers equation, and through the Hopf–Cole transform to the KPZ equation. This result established a rigorous bridge between microscopic exclusion processes and macroscopic growth models in the KPZ universality class. More recent developments have focused on exclusion processes with boundaries. Corwin and Shen \cite{corwin2018open} analyzed the open ASEP in the weakly asymmetric regime and showed that it converges to the KPZ equation with boundary conditions. Extending this perspective, Corwin and Knizel \cite{corwin2024stationary} constructed stationary measures for the open KPZ equation, offering a probabilistic characterization of its equilibrium states. This was further studied in the mathematics and physics literature \cite{barraquand2021steady, bryc2023markov}, where the Laplace transform formula from \cite{corwin2024stationary} was inverted. 

The rest of the paper is organized as follows. In Section \ref{sec.martin} we introduce Martin’s construction for the multi-species ASEP, which characterizes the set of the convoy. In Section \ref{sec.prop}, we prove some basic properties of the inhomogeneous walk in Martin’s construction, based on which we derive the convergence in distribution of the size of TASEP convoy in Section \ref{sec.prop}, the exact formula for the expectation of the size of the ASEP convoy in Section \ref{exactformula}. Section \ref{sec.universal} is dedicated to the proof of the universality of the size of ASEP convoy for fixed $q \in [0,1)$. Section \ref{qto1} tackles the $q \to 1$ weakly asymmetric limit of the convoy size. 

\textbf{Acknowledgments.} The author would like to express his gratitude to Alexey Bufetov and Nikita Safonkin for numerous insightful discussions. 

\section{The Martin's Construction}\label{sec.martin}
In this section, we recall a recursive construction of the stationary distribution for the multi-species ASEP developed by Martin in \cite{martin2020stationary}, which plays a central role in our analysis.  

We use the following standard notation for the q-Pochhammer symbols throughout this paper:
\begin{equation*}
    \begin{aligned}
        \left(a;q\right)_n = \prod_{i=0}^{n-1} \left(1 - a q^i\right), \quad \quad \quad \left(a_1, \dots a_k;q\right)_n = \prod_{i=1}^{k} \left(a_i;q\right)_n\\
        \left(a;q\right)_\infty = \prod_{i=0}^{\infty} \left(1 - a q^i\right), \quad \quad \quad \left(a_1, \dots a_k;q\right)_\infty = \prod_{i=1}^{k} \left(a_i;q\right)_\infty
    \end{aligned}
\end{equation*}
We start by a well-known result on the stationary measure of the n-type ASEP.
\begin{theorem}[\cite{ferrari1991microscopic}, Lemma 3.3]
    Let $\lambda_1, \lambda_2, \dots, \lambda_n \geq 0$ be given such that 
    \[
    \lambda_1 + \lambda_2 + \cdots + \lambda_n < 1.
    \]
    Then there exists a unique translation-invariant stationary measure for the $n$-type ASEP on $\mathbb{Z}$, in which $\lambda_i$ is the density of particles of type $i$. Here, the density of type $i$ particles refers to the probability that a given site is occupied by a type $i$ particle under the stationary distribution. We denote this unique stationary measure by $\nu_{\lambda}^{\left(n\right)}$.
\end{theorem}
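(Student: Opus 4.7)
The plan is to split the proof into existence and uniqueness, and in both halves reduce the $n$-type question to the classical theory of single-type ASEP via \emph{color projection}. For each $k \in \{1,\ldots,n-1\}$, lumping types $1,\ldots,k$ into one super-species (``particles'') and types $k+1,\ldots,n$ together with empty sites into another (``holes'') intertwines the $n$-type dynamics with the single-type ASEP on $\mathbb{Z}$. Hence every translation-invariant stationary measure $\mu$ for the $n$-type ASEP projects under each such map to a translation-invariant stationary measure of the single-type ASEP, and the latter are completely classified by Liggett as Bernoulli product measures with density equal to that of the ``particle'' class.

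For existence, I would construct $\nu_{\lambda}^{(n)}$ via the multi-line queueing process of Martin (introduced in Section~\ref{sec.martin}), feeding in $n$ independent Bernoulli arrival streams whose intensities are tuned so that the output has type densities $\lambda_1,\ldots,\lambda_n$. Translation invariance is immediate from shift-equivariance of the queue map, stationarity under the multi-type ASEP generator is a central theorem of \cite{martin2020stationary}, and the correct tuple of input intensities is identified by a direct combinatorial accounting of how each input species propagates through the queues. An alternative route is to start the dynamics from any product measure whose per-site marginal gives type $i$ probability $\lambda_i$ and argue convergence as $t \to \infty$, but this requires more substantial machinery than the queueing construction.

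For uniqueness, let $\mu$ be any translation-invariant stationary measure with densities $\lambda_1,\ldots,\lambda_n$. By the color projection above together with Liggett's classification, every pushforward $\pi_k \mu$ must be Bernoulli with parameter $\rho_k = \lambda_1 + \cdots + \lambda_k$, so every single-type marginal of $\mu$ is pinned down. The main obstacle is to promote uniqueness of all these marginals to uniqueness of the joint law, since specifying every $\pi_k \mu$ does not a priori determine $\mu$. I would close this gap by induction on $n$: the inductive hypothesis applied to the $(n-1)$-type ASEP obtained by merging type $n$ with the empty sites pins down the joint law of types $1,\ldots,n-1$, and the additional projection distinguishing type $n$ from empty sites fixes the density of type-$n$ particles on the complementary positions. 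A basic-coupling argument between $\mu$ and the queue-constructed $\nu_{\lambda}^{(n)}$, combined with ergodic decomposition of the translation action, then rules out any nontrivial correlation between type-$n$ positions and the stronger species and forces $\mu = \nu_{\lambda}^{(n)}$.
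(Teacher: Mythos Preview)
The paper does not prove this statement at all; it is quoted verbatim as a known result from \cite{ferrari1991microscopic} and used as input for Martin's queueing construction. So there is no ``paper's own proof'' to compare against, and your proposal should be read as an attempt to reconstruct Ferrari's original argument.

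On existence, invoking Martin's queueing construction is logically circular in the context of this paper: Martin's Theorem~3.1 is stated as producing \emph{the} stationary measure $\nu_{\lambda,\mu-\lambda}^{(2)}$, i.e.\ it already presupposes the existence and uniqueness you are trying to establish. It is also anachronistic, since Ferrari's result predates Martin's by three decades. A self-contained existence proof would instead take a weak limit of Ces\`aro averages of the semigroup acting on a product initial measure, or (as Ferrari does) pass through the characterization of extremal invariant measures.

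On uniqueness, your color-projection reduction is the right first move, but the argument you give to close the gap does not work as stated. Knowing that each $\pi_k\mu$ is Bernoulli$(\rho_k)$ and that the law of types $1,\ldots,n-1$ is determined still leaves the \emph{conditional} distribution of type-$n$ particles given the stronger species completely free; fixing the unconditional density of type $n$ does not pin this down. The sentence ``a basic-coupling argument \ldots\ rules out any nontrivial correlation'' is where all the content lives, and you have not said what that argument is. The actual mechanism (in Ferrari and in Liggett) is to couple two ergodic translation-invariant stationary measures with the same density vector via the basic coupling, observe that discrepancies of each type are non-increasing and can only be destroyed, and then use ergodicity plus translation invariance to force the discrepancy density to zero. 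Without spelling out this monotonicity-of-discrepancies step, the induction you propose has no engine.
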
  

Martin found an algorithm to generate $\nu_{\lambda}^{\left(n\right)}$ for any $n \geq 2$ by using multi-line queuing in \cite{martin2020stationary}. However, since we are concerned with the case of 2-type particles in this paper, for simplicity we restrict ourselves to the construction of $\nu_{\lambda,\mu - \lambda}^{\left(2\right)}$. 

We consider two $\mathbb{Z}$-indexed lattices, the arrival line $\{a\left(i\right)\}_{i \in \mathbb{Z}}$ and the service line $\{s\left(i\right)\}_{i \in \mathbb{Z}}$. Each site on the arrival (respectively, service) line is occupied with probability $\lambda$ (respectively, $\mu-\lambda$), and is a hole otherwise. Accordingly, we set $a\left(i\right)=1$ (respectively, $s\left(i\right)=1$) if site $i$ is occupied, and $a\left(i\right)=0$ (respectively, $s\left(i\right)=0$) otherwise. 

Martin’s construction defines a random algorithm which generates a new $\mathbb{Z}$-indexed lattice $\{d\left(i\right)\}_{i \in \mathbb{Z}}$ on $\mathbb{Z}$ from the arrival and the service line, called the departure line, where $d\left(i\right)=0$ denotes a hole, $d\left(i\right)=1$ a first-class particle, and $d\left(i\right)=2$ a second-class particle. 

In the following, we explain how this algorithm functions. Specifically, let 
\[
Q_i := \sup_{n \leq i-1} \left(\sum_{j = n}^{i-1}a\left(j\right) - \sum_{j= n}^{i-1}s\left(j\right)\right),
\]
which represents the number of arrivals waiting for services before position $i$. Since the mean $\lambda$ of the arrival line is less than the mean of the service line $mu$, by the law of large numbers, $Q_i$ is almost surely finite. We sometimes also call the value of $Q_i$ the length of the queue. Given $Q_i = k$, the algorithm for determining $d\left(i\right)$ is as follows:
\begin{itemize}
    \item If $a\left(i\right) = s\left(i\right) = 0$, then $d\left(i\right) = 0$ and $Q_i = k$,
    \item if $a\left(i\right) = s\left(i\right) = 1$, then the arrival immediately takes the service, so $d\left(i\right) = 1$ and $Q_i = k$,
    \item if $a\left(i\right) = 1, s\left(i\right) = 0$, then $d\left(i\right) = 0$ and $Q_i = k+1$, the arrival service is added to the waiting queue,
    \item if $a\left(i\right) = 0, s\left(i\right) = 1$, each particle in the queue independently rejects the service with probability $q$. Hence, with probability $q^k$, the service is rejected by all particles, producing a second-class particle at site $i$, so $d\left(i\right) = 2$, $Q_i = k$. With probability $1 - q^k$, the service is accepted by some particle in the queue, giving $Q_i = k -1 $ and $d\left(i\right) = 1$. 
\end{itemize}

\begin{remark}
    Clearly, all the notions $a\left(i\right)$, $s\left(i\right)$ and $d\left(i\right)$ (and some other that we will define later in this section) depend on parameter $\lambda$ and $\mu$ (later some of them also depend on $q$), we omit them when the meaning is clear from the context.
\end{remark} 

We can further write the recurrence relation for $Q_i$ in terms of the algorithm,
\begin{equation}
    Q_{i+1} = Q_{i} + \mathbf{1}_{a\left(i\right) = 1} - \mathbf{1}_{d\left(i\right) = 1}.\label{queue}
\end{equation}

Alternatively, from the update rules, $\left(Q_i\right)_{i\in\mathbb Z}$ evolves as a birth-death chain on $\mathbb Z_{\ge0}$: the queue increases by one when $a\left(i\right)=1,s\left(i\right)=0$ (probability $\lambda\left(1-\mu\right)$), decreases by one when $a\left(i\right)=0,s\left(i\right)=1$ and the service is accepted by some particle (probability $\mu\left(1-\lambda\right)\left(1-q^k\right)$), and otherwise remains unchanged. Thus
\[
    p_{k,k+1} = \lambda\left(1-\mu\right), \qquad 
    p_{k,k-1} = \mu\left(1-\lambda\right)\left(1-q^k\right), \qquad 
    p_{k,k} = 1-p_{k,k+1}-p_{k,k-1}.
\]

Its stationary distribution $\left(\eta_k\right)_{k\ge0}$ is determined by detailed balance and satisfies
\begin{equation}
    \eta_k
    = \left(\frac{\lambda}{1-\lambda}\frac{1-\mu}{\mu}\right)^k
      \prod_{j=1}^{k}\frac{1}{1-q^j}\eta_0,
    \label{initial2}
\end{equation}
with $\eta_0$ chosen so that $\sum_{k=0}^\infty \eta_k=1$. Existence of a stationary distribution follows from $\lambda<\mu$.

\begin{theorem}[\cite{martin2020stationary}, Theorem 3.1]
    Running the queue-length process $Q_i$ in its stationary distribution~\eqref{initial2}, and recording the corresponding outputs $d\left(i\right)$, produces the stationary distribution $\nu_{\lambda,\mu-\lambda}^{\left(2\right)}$ of the two-type ASEP on $\mathbb{Z}$. In this distribution, the density of first-class particles is $\lambda$, while the density of second-class particles is $\mu-\lambda$.
\end{theorem}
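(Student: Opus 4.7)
The plan is to establish three assertions in turn: translation invariance of the output law $\nu$, the claimed densities of first- and second-class particles, and invariance of $\nu$ under the two-type ASEP generator. Together with the uniqueness of the translation-invariant stationary measure with prescribed densities recalled at the start of the section, these three assertions identify $\nu$ with $\nu_{\lambda,\mu - \lambda}^{\left(2\right)}$.

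Translation invariance is immediate from the construction: the driving data $\left(a\left(i\right), s\left(i\right)\right)_{i \in \mathbb{Z}}$ is i.i.d.\ Bernoulli, and $\left(Q_i\right)_{i \in \mathbb{Z}}$ is the birth-death chain with transition probabilities $p_{k, k \pm 1}$ run in its stationary law $\left(\eta_k\right)$, so the triple $\left(a\left(i\right), s\left(i\right), Q_i\right)_{i \in \mathbb{Z}}$ is shift-invariant, and hence so is its deterministic function $d\left(i\right)$. For the densities, taking expectation in \eqref{queue} under the stationarity of $Q$ immediately gives $\mathbb{P}\left(d\left(i\right) = 1\right) = \lambda$. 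The probability $\mathbb{P}\left(d\left(i\right) = 2\right) = \left(1 - \lambda\right) \mu \sum_k \eta_k q^k$ then reduces to $\mu - \lambda$ once one verifies, from \eqref{initial2} and the $q$-exponential identity $\sum_k z^k / \left(q; q\right)_k = 1 / \left(z; q\right)_\infty$, that $\sum_k \eta_k q^k = \left(\mu - \lambda\right) / \left(\mu\left(1 - \lambda\right)\right)$.

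The heart of the proof is dynamical invariance of $\nu$ under the two-type ASEP generator $L$. My approach is to construct, for each bond $\left(i, i+1\right)$, a measure-preserving involution on the input data that implements the local ASEP swap on the output. Concretely, for each reducible local output pattern $\left(x, y\right) \in \left\{\left(1, 0\right), \left(2, 0\right), \left(1, 2\right)\right\}$ at sites $\left(i, i+1\right)$, I would enumerate the pre-image input configurations $\left(a\left(i\right), s\left(i\right), a\left(i+1\right), s\left(i+1\right), Q_i\right)$ and exhibit an involution on them that swaps $d\left(i\right)$ with $d\left(i+1\right)$, preserves $d\left(j\right)$ for $j \notin \left\{i, i+1\right\}$ by leaving $Q_{i-1}$ and $Q_{i+2}$ invariant, and maps the input measure to itself up to exactly the ratio of ASEP swap rates $1$ and $q$. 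The crucial ingredient is the detailed balance relation $\eta_k p_{k, k+1} = \eta_{k+1} p_{k+1, k}$ for the queue chain, which supplies precisely the factor needed to convert a forward input involution into a $q$-weighted output swap. Summing these infinitesimal contributions over bonds then yields $\int L f \, d\nu = 0$ for every cylinder $f$.

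The principal obstacle will be the case $\left(x, y\right) = \left(1, 2\right)$, since it involves the queue-length-dependent rejection probability $q^k$, which must combine with the Boltzmann weight $\eta_k$ to produce the flat ASEP swap weight $q$ independent of $k$. This ultimately rests on the same moment identity $\sum_k \eta_k q^k = \left(\mu - \lambda\right) / \left(\mu\left(1 - \lambda\right)\right)$ used in the density computation, together with the explicit ratios $\eta_{k+1}/\eta_k$ read from \eqref{initial2}. Once the three local identities are in hand, extending the pairwise swap invariance to full generator invariance via a standard cylinder-function density argument completes the proof and identifies $\nu$ with $\nu_{\lambda, \mu - \lambda}^{\left(2\right)}$.
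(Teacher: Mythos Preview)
The paper does not prove this statement. It is stated as Theorem 3.1 of \cite{martin2020stationary} and simply cited as background input; no proof (or even sketch) appears in the present paper. So there is nothing to compare your proposal against here.

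As a separate remark on your proposal itself: the translation-invariance and density paragraphs are fine, but the core step --- a bondwise measure-preserving involution on the input data that realizes the ASEP swap on the output --- is more delicate than you indicate. The $(1,2)$ case is not handled by the scalar identity $\sum_k \eta_k q^k = (\mu-\lambda)/(\mu(1-\lambda))$; what you actually need is a \emph{pathwise} pairing of input configurations at fixed $Q_i=k$ whose probability ratio is exactly $q$, not merely an identity after summing over $k$. Concretely, a second-class output at $i+1$ carries the factor $q^{Q_{i+1}}$, and swapping to put the second-class particle at $i$ changes the relevant queue length by one; the required ratio $q$ then comes from $q^{k}/q^{k-1}$ together with the detailed-balance ratio $\eta_k p_{k,k+1}/(\eta_{k+1}p_{k+1,k})=1$, not from the summed moment. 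If you want to pursue this route, Martin's own proof in \cite{martin2020stationary} proceeds differently, via an interchangeability/Burke-type argument for the $q$-deformed queue rather than a direct generator computation; that is usually cleaner than building the involutions by hand.
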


Since we are interested in the probability of $U_j$ in the convoy of $U_0$, the characterization of the convoy is taken on the conditioned probability space where there is a second-class particle at position. To motivate a theorem later from \cite{martin2020stationary}, we prove the following lemma. 
\begin{lemma}
    Given $d\left(0\right) = 2$, let $\pi_k$ denote the conditional distribution of $Q_1$. Then $\pi_k$ satisfies the recurrence
    \begin{equation}
        \pi_k = \frac{q}{1-q^k}\frac{\lambda}{1 - \lambda}\frac{1 - \mu}{\mu} \pi_{k-1}.\label{initial}
    \end{equation}
\end{lemma}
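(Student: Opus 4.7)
The plan is to condition on $Q_0 = k$ and use Bayes' rule to pass from the stationary law $\eta_k$ of $Q_0$ to the conditional law $\pi_k$ of $Q_1$ given $d(0)=2$. The first observation I would make is that the event $d(0)=2$ is incompatible with any arrival or departure at site $0$: under the update rules, $d(0)=2$ forces $a(0)=0$, $s(0)=1$, and that every one of the $k$ particles currently in the queue reject the incoming service. Consequently, on the event $\{d(0)=2\}$ we have $Q_1 = Q_0$, so $\pi_k = \mathbb{P}\left(Q_0=k \,\middle|\, d(0)=2\right)$.

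Next I would compute $\mathbb{P}\left(d(0)=2 \,\middle|\, Q_0=k\right)$. Since $a(0)$ and $s(0)$ are independent of $Q_0$ (they only drive the chain forward from time $0$), and the $k$ rejection events are independent with rejection probability $q$ each, this conditional probability factors as $(1-\lambda)\mu \cdot q^k$. Bayes' rule then gives
\begin{equation*}
\pi_k \;=\; \frac{\mathbb{P}\left(d(0)=2 \,\middle|\, Q_0=k\right)\,\eta_k}{\mathbb{P}\left(d(0)=2\right)} \;\propto\; q^k\,\eta_k,
\end{equation*}
where the normalizing denominator is independent of $k$ and so drops out of any ratio $\pi_k/\pi_{k-1}$.

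Finally I would substitute the explicit formula \eqref{initial2} for $\eta_k$ and take the ratio. Writing $r = \tfrac{\lambda}{1-\lambda}\cdot\tfrac{1-\mu}{\mu}$, one has $\eta_k/\eta_{k-1} = r/(1-q^k)$, and therefore
\begin{equation*}
\frac{\pi_k}{\pi_{k-1}} \;=\; q \cdot \frac{\eta_k}{\eta_{k-1}} \;=\; \frac{q}{1-q^k}\cdot\frac{\lambda}{1-\lambda}\cdot\frac{1-\mu}{\mu},
\end{equation*}
which is exactly \eqref{initial}. There is no real obstacle here; the only point that deserves care is the conditional independence claim $\mathbb{P}\left(d(0)=2 \,\middle|\, Q_0=k\right) = (1-\lambda)\mu\,q^k$, which requires noting that $a(0)$, $s(0)$, and the rejection coins at time $0$ are drawn independently of the history that produced $Q_0$, so they are independent of $Q_0$ itself.
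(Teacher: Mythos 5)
Your proposal is correct and follows essentially the same route as the paper: Bayes' rule applied to the stationary law $\eta_k$ of $Q_0$, the observation that $d\left(0\right)=2$ forces $Q_1=Q_0$ and occurs with conditional probability proportional to $q^k$ given $Q_0=k$, and then taking the ratio $\pi_k/\pi_{k-1}$ using \eqref{initial2}. Your version is in fact slightly more careful than the paper's, since you make the $\left(1-\lambda\right)\mu$ factor and its $k$-independence explicit rather than silently absorbing it into the normalization.
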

\begin{proof}
    By the fact that $\mathbb{P}\left(Q_1 = k \mid d\left(0\right)=2\right)=\mathbb{P}\left(Q_1 = k, Q_0 = k, d\left(0\right) = 2\right)$ and Bayes' rule
    \[
    \pi_k = \mathbb{P}\left(Q_1 = k \mid d\left(0\right)=2\right) = \frac{\mathbb{P}\left(Q_0 = k\right)}{\mathbb{P}\left(d\left(0\right) = 2\right)} \mathbb{P}\left(Q_1 = k, d\left(0\right) = 2 | Q_0 = k\right)= \frac{\eta_k q^k}{\mathbb{P}\left(d\left(0\right)=2\right)}.
    \]
    Similarly,
    \[
    \pi_{k-1} = \frac{\eta_{k-1} q^{k-1}}{\mathbb{P}\left(d\left(0\right)=2\right)}.
    \]
    Therefore, using \eqref{initial2}
    \[
    \pi_k = \frac{\eta_k q^k}{\mathbb{P}\left(d\left(0\right)=2\right)}
    = \frac{q}{1-q^k}\frac{\lambda}{1-\lambda}\frac{1-\mu}{\mu}\frac{\eta_{k-1} q^{k-1}}{\mathbb{P}\left(d\left(0\right)=2\right)} = \frac{q}{1-q^k}\frac{\lambda}{1-\lambda}\frac{1-\mu}{\mu}\pi_{k-1}.
    \]
\end{proof}

This lemma implies that, when studying properties of the process $Q_i$ under the condition that a second-class particle is present at position $0$, the distribution of $Q_1$ is not the stationary distribution $\eta$, but rather the conditional distribution $\pi$. 

Now we introduce a characterization of the convoy set that was essentially proved by Martin in \cite{martin2020stationary}. We fill some technical details which are omitted in \cite{martin2020stationary}. 
\begin{theorem}[\cite{martin2020stationary}, Proposition 6.4]\label{construction}
     Let $0 \leq \lambda < \mu \le 1$ be two real numbers. Consider an inhomogeneous random walk $\{Q^{\lambda,\mu}_n\}_{n \geq1}$, with initial distribution \(Q^{\lambda,\mu}_1 \sim \pi_k\) \eqref{initial}. We let \(\mathcal{U}^{\lambda,\mu}\) a random set of indices defined by the procedure described below. Conditional on \(Q^{\lambda,\mu}_i=k\), the joint dynamics of \(Q^{\lambda,\mu}_i\) and \(\mathcal{U}^{\lambda,\mu}\) evolve as follows:
    \begin{itemize}
        \item with probability $\lambda\left(1-\mu\right)$, $Q^{\lambda,\mu}_{i+1} = Q^{\lambda,\mu}_i + 1$, $i \notin \mathcal{U}^{\lambda,\mu}$,
        \item with probability $\lambda \mu + \left(1-\lambda\right)\left(1-\mu\right)$ , $Q^{\lambda,\mu}_{i+1} = Q^{\lambda,\mu}_i$, $i \notin \mathcal{U}^{\lambda,\mu}$,
        \item with probability $\left(1-\lambda\right)\mu q^k$ , $Q^{\lambda,\mu}_{i+1} = Q^{\lambda,\mu}_i$, $i \in \mathcal{U}^{\lambda,\mu}$,
        \item with probability $\left(1-\lambda\right)\mu\left(1-q^k\right)$ , $Q^{\lambda,\mu}_{i+1} = Q^{\lambda,\mu}_i - 1$, $i \notin \mathcal{U}^{\lambda,\mu}$.
    \end{itemize}
    
    Let \(U_0 = u\) be the speed of particle \(0\), uniformly distributed on \(\left[-1,1\right]\). Define
    \[
    x = \frac{1+U_0}{2},
    \]
    so that \(x\) is uniform on \(\left[0,1\right]\).
    Let $\mathcal{U}_n^{\lambda,\mu} := \#\{j \in [1,n]: j \in \mathcal{U}^{\lambda,\mu} \}$ be the size of $\mathcal{U}^{\lambda,\mu}$ in $\left[1,n\right]$. The limits of $\mathcal{U}^{\lambda,\mu}$ and $\mathcal{U}^{\lambda,\mu}_n$ exist when $\lambda \uparrow x$ and $\mu \downarrow x$. Denote the limits by $\mathcal{U}^{x,x}$ and $\mathcal{U}^{x,x}_n$ respectively. The convoy set of particle $0$ in the ASEP speed process $\mathcal{C}_0$ is distributed as the random set $\mathcal{U}^{x,x}$. More specifically, we have
    \begin{equation}
        \mathcal{U}^{x,x} \overset{\left(d\right)}{=} \mathcal{C}_0^{\mathbb{Z}^+}, \qquad \mathcal{U}_n^{x,x} \overset{\left(d\right)}{=} \mathcal{C}_0^{n}. \label{randomu} 
    \end{equation}
\end{theorem}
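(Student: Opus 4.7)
The plan is to derive the theorem from Martin's construction of $\nu^{(2)}_{\lambda,\mu-\lambda}$ by performing a two-type color projection of the multi-species ASEP speed process and then letting the color band shrink around the speed $u$ of particle $0$. First I would fix $0 \le \lambda < \mu \le 1$ and project the multi-species ASEP by sending label $j$ to class $1$ if $U_j < 2\lambda-1$, to class $2$ if $2\lambda-1 \le U_j \le 2\mu-1$, and to a hole otherwise. Because the $U_j$ are marginally uniform on $[-1,1]$ and the speed process is translation invariant and ergodic, this yields a stationary two-type configuration of densities $(\lambda,\mu-\lambda)$, which by uniqueness must coincide in distribution with $\nu^{(2)}_{\lambda,\mu-\lambda}$. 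Thus we may couple the projected configuration with the queueing output triple $(a,s,d)$ from Martin's algorithm.

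The second step is to read the four bullet-point dynamics out of the algorithm. The only way to produce $d(i)=2$ is $a(i)=0$, $s(i)=1$, and unanimous rejection inside the queue, which occurs with probability $(1-\lambda)\mu q^{Q_i}$; the other three events $\{a(i)=1,s(i)=0\}$, $\{a(i)=s(i)\}$, and $\{a(i)=0,s(i)=1,\text{accepted}\}$ carry probabilities $\lambda(1-\mu)$, $\lambda\mu+(1-\lambda)(1-\mu)$, and $(1-\lambda)\mu(1-q^{Q_i})$ respectively, and together with the queue update \eqref{queue} they match the four bullets verbatim. Conditioning on $d(0)=2$ pins down a second-class particle at site $0$ and, by the preceding lemma, yields the initial law $\pi_k$ for $Q_1^{\lambda,\mu}$.

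The third step is to pass from positions to labels using Bufetov's inverse-permutation symmetry \cite{bufetov2020interacting}: the joint law of $(X_j(t))_{j\in\mathbb{Z}}$ agrees with that of the inverse permutation, so the random set of labels of second-class particles is distributed as the random set of their positions. Combined with step~2, this identifies the set $\mathcal{U}^{\lambda,\mu}$ (conditional on $0\in\mathcal{U}^{\lambda,\mu}$) with the positive-label part of $\{j:U_j\in[2\lambda-1,2\mu-1]\}$ (conditional on $U_0\in[2\lambda-1,2\mu-1]$); letting $t\to\infty$ is legitimate because the projected configuration is stationary.

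Finally, I would let $\lambda\uparrow x$ and $\mu\downarrow x$. The transition probabilities in Martin's algorithm are continuous in $(\lambda,\mu)$ and the initial distribution $\pi_k$ has a well-defined nontrivial limit as $\lambda,\mu\to x$, so $(Q^{\lambda,\mu},\mathcal{U}^{\lambda,\mu})$ converges in finite-dimensional distributions to a process $(Q^{x,x},\mathcal{U}^{x,x})$ with the stated law. On the speed-process side, the nested sets $\{j>0:U_j\in[2\lambda-1,2\mu-1]\}$ decrease to $\{j>0:U_j=u\}=\mathcal{C}_0^{\mathbb{Z}^+}$ conditional on $U_0=u$, giving the identifications in \eqref{randomu}. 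The step I expect to be hardest is this last limit at the level of random sets: one must show tightness of $\#\mathcal{U}_n^{\lambda,\mu}$ uniformly in the shrinking band and rule out labels entering or leaving the convoy in the limit. This is where the monotonicity of the color projection in $[\lambda,\mu]$ and a dominated convergence argument, applied to each finite window $[1,n]$, become essential; it is also the place where Martin's exposition leaves technical details to be filled in.
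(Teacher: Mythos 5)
Your overall route coincides with the paper's: reduce to Martin's queueing construction at the level of a band $[\lambda,\mu]$ and then shrink the band to the point $x$. The difference is in where the work is placed. Steps 1--3 of your proposal re-derive what the paper simply imports as Martin's Lemma 6.6 (conditional on $\{\lambda\le W_0<\mu\}$, the set $\{j>0:\lambda\le W_j<\mu\}$ is distributed as $\mathcal U^{\lambda,\mu}$), while step 4 --- the shrinking-band limit --- is precisely where the paper's own proof lives, and it is the part you leave as a sketch. Two concrete gaps result. First, in step 1 you claim that translation invariance and ergodicity of the speed process, together with uniqueness of $\nu^{(2)}_{\lambda,\mu-\lambda}$, identify the projected configuration with $\nu^{(2)}_{\lambda,\mu-\lambda}$. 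Uniqueness is among \emph{translation-invariant stationary} measures for the two-type dynamics; translation invariance and ergodicity alone do not give stationarity under the two-type ASEP evolution. You need the nontrivial fact that the speed process is itself stationary for the multi-species dynamics (and the Bufetov symmetry to pass between labels and positions), which is exactly the content packaged in Martin's Proposition 6.4/Lemma 6.6. Re-deriving it requires more than the uniqueness theorem.

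Second, in step 4 you correctly flag the limit as the hard part, but the mechanism you invoke (tightness of $\#\mathcal U_n^{\lambda,\mu}$ plus dominated convergence) is not the one that closes the argument. On the queueing side the paper uses Martin's monotone coupling to get a.s. decreasing sets $U_1\supseteq U_2\supseteq\cdots$ and then shows $\bigcap_n U_n=\mathcal U^{x,x}$ by a finite-dependence argument. On the speed-process side the delicate point is not the decrease of the nested sets for a fixed realization but the convergence of \emph{conditional laws}: conditioning on $U_0\in I_n$ must converge to conditioning on $U_0=u$. The paper handles this with the Lebesgue differentiation theorem applied to $t\mapsto\mathbb E[\,\cdot\mid W_0=t]$ together with the bound $\mathbb P(W_j\in I_n,\,W_j\ne W_0\mid W_0\in I_n)=O(|I_n|)$, which rests on the joint law of $(W_0,W_j)$ having a bounded density off the diagonal. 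Without identifying that input, your step 4 does not rule out the possibility that, under the shrinking conditioning, extra labels with $U_j\ne U_0$ but $U_j\in I_n$ persist in the limit. So the proposal is the right strategy but has a real gap at the justification of step 1 and is incomplete at the decisive limiting step.
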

\begin{remark}
    There are two minor differences between the statement we present here and the one in Martin's original paper. 
    
    Firstly, Martin studied another family $W_i$, which is the limit of the $L$ type multi-ASEP on a ring as $L \to \infty$ and we study the ASEP speed process $U_i$. From the perspective of symmetry \cite{bufetov2020interacting}
    \[
    \{W_i\}_{i \in \mathbb{Z}} \overset{d}{=}\{\frac{1-U_i}{2}\}_{i \in \mathbb{Z}}.
    \]
    In particular, the convoy set of $W_i$ and the convoy set of $U_i$ have the same distribution. Hence, Martin's characterization for the convoy set of $W_i$ can be also regarded as a characterization for our $U_i$. 

    Secondly, Martin only claimed that our statement is apparent from his proof of a weaker result. For the self-containess, we provide a proof of the result established by Martin. Our proof is for $\{W_i\}_{i \in \mathbb{Z}}$. As we already explained in the first part of this remark, this is equivalent to $\{U_i\}_{i \in \mathbb{Z}}$.
\end{remark}

\begin{proof}
    In Martin's work \cite[Lemma 6.6]{martin2020stationary}, he proved that conditional on the event $\{\lambda \le W_0 < \mu \}$
    \[
    \{j>0: \lambda \le W_j < \mu \}\overset{d}{=} \mathcal{U}^{\lambda,\mu}.
    \]
    It suffices to establish the following two limits as $\lambda\uparrow x$ and $\mu\downarrow x$:
    \begin{equation}
    \mathcal{U}^{\lambda,\mu}\ \Rightarrow\ \mathcal{U}^{x,x}
    \end{equation}
    and, writing $I=\left(\lambda,\mu\right)$, conditional on $W_0\in I$ the law of $\{i>0:W_i\in I\}$ converges to the conditional law $\{i>0:W_i=W_0\}\big|W_0=x\}$.

    We first prove the convergence of $\mathcal{U}^{\lambda,\mu}$ to $\mathcal{U}^{x,x}$. Let $\lambda_n=x-\tfrac1n$ and $\mu_n=x+\tfrac1n$, and define $U_n:=\mathcal{U}^{\lambda_n,\mu_n}$. By the stochastic monotonicity argument used by Martin \cite[Lemma 6.5]{martin2020stationary}, for $\left(\lambda_m,\mu_m\right)\supset\left(\lambda_n,\mu_n\right)\supset\left(x,x\right)$ there is a coupling under which
    \[
    U_m\ \supseteq\ U_n\ \supseteq\ U_{x,x}\qquad\text{for all }m<n.
    \]
    Thus $U_1\supseteq U_2\supseteq\cdots\supseteq U_{x,x}$ almost surely; write $U_\infty:=\bigcap_{n\ge1}U_n$. We claim $U_\infty=U_{x,x}$ almost surely under the coupling, which implies $U_n\Rightarrow U_{x,x}$.

    By the construction of $U_\infty$, $U_\infty \supseteq U^{x,x}$. Conversely, we assume $i \in \mathbb{Z}^{+}, i \notin U^{x,x}$. By the coupling, this event depends on finitely many random variables on $\left[0,1\right]$ used in the coupling. Hence, we can find sufficiently large $n$, such that $i \notin U_n$ with probability one (by taking these random variable locating in the segments such that $Q^{x-\frac{1}{n}, x+\frac{1}{n}}$ has exactly the same path as $Q^{x,x}$). 

    Fix $x\in\left(0,1\right)$ and intervals $I_n=\left(x-\tfrac1n,x+\tfrac1n\right)$. Let $J\subset\mathbb Z_+$ be finite and let $\psi:\{0,1\}^J\to\mathbb R$ be a bounded Lipschitz function. We show that
    \[
    \mathbb E\Big[\psi\big(\mathbf 1_{\{W_j\in I_n\}}:j\in J\big)\Big|W_0\in I_n\Big]\longrightarrow\mathbb E\Big[\psi\big(\mathbf 1_{\{W_j=W_0\}}:j\in J\big)\Big|W_0=x\Big],
    \]
    which implies convergence in distribution of the conditional laws on $\{0,1\}^{\mathbb Z_+}$.

    Denote
    \[
    X_n=\psi\left(\mathbf 1_{\{W_j\in I_n\}}:j\in J\right),\qquad X^*=\psi\left(\mathbf 1_{\{W_j=W_0\}}:j\in J\right).
    \]
    By conditioning on the value of $W_0$, 
    \[
    \mathbb E\left[X_n|W_0\in I_n\right] =\frac{1}{|I_n|}\int_{I_n}\mathbb E\left[X_n|W_0=t\right]dt.
    \]
    By the Lebesgue differentiation theorem, for almost every $x$,
    \[
    \frac{1}{|I_n|}\int_{I_n}E\left[X^*|W_0=t\right]dt\longrightarrow\mathbb E\left[X^*|W_0=x\right].
    \]
    It remains to show that 
    \begin{equation}
        \label{toshow}\frac{1}{|I_n|}\int_{I_n}\big|\mathbb E\left[X_n|W_0=t\right]-\mathbb E\left[X^*|W_0=t\right]\big|dt\to0.
    \end{equation}
    For each $t$, the vectors $\left(\mathbf 1_{\{W_j\in I_n\}}\right)_{j\in J}$ and $\left(\mathbf 1_{\{W_j=t\}}\right)_{j\in J}$ differ only if $W_j\in I_n$ but $W_j\ne t$ for some $j$. By Lipschitzness of $\psi$,
    \[
    \big|\mathbb E\left[X_n|W_0=t\right]-\mathbb E\left[X^*|W_0=t\right]\big|\le L_\psi\sum_{j\in J}\mathbb P\big(W_j\in I_n,\ W_j\ne t\big|W_0=t\big).
    \]
    The joint law of $\left(W_0,W_j\right)$ has a bounded density off the diagonal (this is clear from Martin's paper, see the proof of the joint density of $W_0$ and $W_1$ \cite[page 47]{martin2020stationary} for example). Hence
    \[
    \mathbb P\left(W_j\in I_n,\ W_j\ne W_0|W_0\in I_n\right)=O\left(|I_n|\right),
    \]
    which implies \eqref{toshow}. 
\end{proof}
\begin{remark}
    We omit the superscript $x,x$ in related objects such as $Q_i^{x,x}$, $\mathcal{U}^{x,x}$, when the choice of parameter is clear from the context. 
\end{remark}
The main goal of this article is to investigate the asymptotic size of the convoy of $U_0$. By using the previous theorem, we turn this question to the study of random set $\mathcal{U}$. We end this section by proving a technical proposition that we use later in this paper, which gives another expression of the distribution of $Q_1$. 

\begin{proposition}\label{prop.finite}
    Let $\pi_k$ be the initial distribution of $Q_1$ in the case of \eqref{initial} after taking the limit $\lambda \uparrow x, \mu \downarrow x$. We have
    \begin{equation}
        \pi_k = \frac{\left(q;q\right)_{\infty}}{\left(q;q\right)_{k}} q^k.\label{lawofpi}
    \end{equation}
    In addition, $Q_1$ has finite mean for any fixed $q \in [0,1)$ 
    \begin{equation}
        \mathbb{E}\left[Q_1\right] < \infty.\label{Eq1}
    \end{equation}
\end{proposition}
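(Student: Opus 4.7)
The plan is to pass to the limit in the recurrence from Lemma~2.4, iterate it, and then normalize using a standard $q$-series identity. For the finiteness of the mean, I would bound the tail by a geometric series.

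First, I would take the limit $\lambda \uparrow x$ and $\mu \downarrow x$ in the recurrence
\[
\pi_k = \frac{q}{1-q^k}\frac{\lambda}{1-\lambda}\frac{1-\mu}{\mu}\pi_{k-1}.
\]
Since $\frac{\lambda}{1-\lambda}\cdot\frac{1-\mu}{\mu} \to \frac{x}{1-x}\cdot\frac{1-x}{x} = 1$ in this limit, the recurrence becomes $\pi_k = \frac{q}{1-q^k}\pi_{k-1}$. Iterating from $k$ down to $0$ yields
\[
\pi_k = \frac{q^k}{(1-q)(1-q^2)\cdots(1-q^k)}\pi_0 = \frac{q^k}{(q;q)_k}\pi_0.
\]

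Next, I would determine $\pi_0$ by imposing $\sum_{k\ge 0}\pi_k = 1$. Applying the classical $q$-exponential identity
\[
\sum_{k=0}^{\infty}\frac{z^k}{(q;q)_k} = \frac{1}{(z;q)_\infty}, \qquad |z|<1,
\]
at $z=q$ gives $\sum_{k\ge 0}\frac{q^k}{(q;q)_k} = \frac{1}{(q;q)_\infty}$, so $\pi_0 = (q;q)_\infty$ and the formula \eqref{lawofpi} follows. (A mild caveat worth verifying is that taking the limit commutes with normalization; this is harmless here because $\pi_k$ is monotone in $(\lambda,\mu)$ through the explicit factor $\frac{\lambda}{1-\lambda}\cdot\frac{1-\mu}{\mu}$, so one can pass the limit termwise and use dominated convergence to justify the identity for the normalization constant.)

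Finally, for \eqref{Eq1}, I would observe that $(q;q)_k$ is decreasing in $k$ with $(q;q)_k \ge (q;q)_\infty > 0$ for every $k$, so
\[
\mathbb{E}[Q_1] = (q;q)_\infty \sum_{k=0}^{\infty} \frac{k\, q^k}{(q;q)_k}
\le \sum_{k=0}^{\infty} k\, q^k = \frac{q}{(1-q)^2} < \infty,
\]
for any fixed $q\in[0,1)$. I don't expect a real obstacle here; the only point requiring a little care is the justification of interchanging the limit $\lambda\uparrow x$, $\mu\downarrow x$ with the normalization, and this is handled by the dominated convergence argument indicated above.
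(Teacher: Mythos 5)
Your proposal is correct and follows essentially the same route as the paper: both reduce to the limiting recurrence $\pi_k=\frac{q}{1-q^k}\pi_{k-1}$, normalize via the Euler identity $\sum_{k\ge0}\frac{q^k}{\left(q;q\right)_k}=\frac{1}{\left(q;q\right)_\infty}$ (which the paper checks by the same series computation), and bound the mean using $\frac{\left(q;q\right)_\infty}{\left(q;q\right)_k}\le 1$ so that $\mathbb{E}\left[Q_1\right]\le\sum_{k\ge0}kq^k<\infty$. The only cosmetic difference is that you derive the formula by iterating the recurrence while the paper verifies the stated formula against it, and your remark on passing the limit $\lambda\uparrow x$, $\mu\downarrow x$ through the normalization is a harmless extra precaution.
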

\begin{proof}
    We compute
    \[
    \frac{\pi_k}{\pi_{k-1}} = \left(\frac{\left(q;q\right)_\infty}{\left(q;q\right)_k} q^k\right)/\left(\frac{\left(q;q\right)_\infty}{\left(q;q\right)_{k-1}} q^{k-1}\right) = \frac{q}{1-q^k},
    \]
    so the recurrence $\pi_k=\frac{q}{1-q^k}\pi_{k-1}$ holds.
    It remains to check normalization, we have
    \[
    \begin{aligned}
        \sum_{i=0}^{\infty} \pi_k &= \sum_{i=0}^{\infty}  \frac{\left(q;q\right)_{\infty}}{\left(q;q\right)_{k}} q^k = \left(q;q\right)_{\infty} \left(1 + \frac{q}{1-q} + \frac{q^2}{\left(1-q\right)\left(1 - q^2\right)} \dots\right)\\
        &= \frac{\left(q;q\right)_{\infty}}{\left(q;q\right)_{k}} q^k = \left(q;q\right)_{\infty} \left(\frac{1}{1-q} + \frac{q^2}{\left(1-q\right)\left(1 - q^2\right)} \dots\right) = \frac{\left(q;q\right)_{\infty}}{\left(q;q\right)_{\infty}} = 1.
    \end{aligned}
    \]
    
    For the $\mathbb{E}\left[Q_1\right]$, we use the trivial bound $\left(q;q\right)_{\infty} < \left(q;q\right)_{k}$,
    \[
    \mathbb{E}\left[Q_1\right] = \sum_{i=0}^{\infty} k \pi_k = \sum_{i=0}^{\infty} k \frac{\left(q;q\right)_{\infty}}{\left(q;q\right)_{k}} q^k < \sum_{i=0}^{\infty} k q^k = \frac{q}{\left(1-q\right)^2} < \infty.
    \]
\end{proof}

\section{Properties of the inhomogeneous random walk and first applications}\label{sec.prop}
\subsection{Properties of the inhomogeneous random walk}
In this section we study the properties of $Q_i$ in preparation for the analysis that follows. We first prove a symmetric property of the random walk $Q_i$, which is of independent interest. 

\begin{theorem}
Let $0<q<1$ and $x\in[0,1]$. Consider the Markov chain $\left(Q_i\right)_{i\ge1}$ on $\mathbb{Z}_{\ge0}$ with initial law
\[
\mathbb{P}\left(Q_1=k\right)=\pi_k=\frac{\left(q;q\right)_\infty}{\left(q;q\right)_k}q^k,
\]
and one-step transition probabilities (independent of $i$), for $k\ge0$,
    \[
    P\left(k,k+1\right)=x\left(1-x\right),\qquad
    P\left(k,k-1\right)=x\left(1-x\right)\left(1-q^k\right),\qquad
    P\left(k,k\right)=1-x\left(1-x\right)\bigl(2-q^k\bigr),
    \]
with the convention $P\left(0,-1\right)=0$. We write
    \[
    P\left(a,b\right):=\mathbb{P}\bigl(Q_{i+1}=b\big|Q_i=a\bigr)
    \]
    for the one-step transition probability, which does not depend on $i$. 
    Fix $n\ge1$ and a sequence $s_1,\dots,s_n\in\{-1,0,1\}$, and set $s:=\sum_{i=1}^n s_i$. Then
        \begin{equation}    \label{eq:main}\sum_{k\ge0}\mathbb{P}\left(Q_1=k,Q_2=k+s_1,\dots,Q_{n+1}=k+s\right) = q^{-s}\sum_{m\ge0}\mathbb{P}\left(Q_1=m,Q_2=m-s_n,\dots,Q_{n+1}=m-s\right).
    \end{equation}
\end{theorem}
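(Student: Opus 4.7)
The plan is to reduce the identity to a pointwise \emph{quasi-detailed balance} between $\pi$ and the kernel $P$, and then telescope along paths. First I would verify by direct computation that for every $k\ge 1$,
\[
\pi_k\,P(k,k-1) \;=\; q\,\pi_{k-1}\,P(k-1,k).
\]
From Proposition \ref{prop.finite} one has $\pi_k/\pi_{k-1}=q/(1-q^k)$, and the kernel ratio $P(k,k-1)/P(k-1,k)=1-q^k$ cancels the denominator exactly, leaving the factor $q$. Uniformly in $\Delta\in\{-1,0,+1\}$ this can be rewritten as
\[
\pi_k\,P(k,k+\Delta) \;=\; q^{-\Delta}\,\pi_{k+\Delta}\,P(k+\Delta,k),
\]
with the $\Delta=0$ case trivial. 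This is the single-step reversal identity that drives everything; the extra factor $q^{-\Delta}$ replaces the usual detailed-balance relation and accounts for the $q^{-s}$ factor in the statement.

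The second step is a telescoping application along an admissible path $(k_0,k_1,\dots,k_n)$ with $k_i-k_{i-1}=s_i$. Multiplying the $n$ relations and cancelling the $\pi$-ratios telescopically, while the exponents sum to $\sum_i s_i=k_n-k_0$, yields the pathwise identity
\[
\pi_{k_0}\prod_{i=1}^{n}P(k_{i-1},k_i) \;=\; q^{-(k_n-k_0)}\,\pi_{k_n}\prod_{i=1}^{n}P(k_i,k_{i-1}).
\]
Setting $k_0=k$ turns the left side into the summand of the LHS of \eqref{eq:main}. Writing $m:=k_n=k+s$ on the right, and noting that the reversed path $k_n,k_{n-1},\dots,k_0$ has increments $-s_n,-s_{n-1},\dots,-s_1$, identifies the right side with the summand of the RHS of \eqref{eq:main}, with the uniform factor $q^{-s}$ pulled out of the sum.

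It remains to match the summation ranges $k\ge 0$ and $m\ge 0$ under the bijection $k\mapsto m=k+s$, whose images may be shifted. Because $P(0,-1)=0$, any path visiting a negative index contributes zero on both sides; hence the nonzero terms pair off correctly and the ``extra'' summands at the end of the shifted range vanish automatically. I anticipate no genuine analytical obstacle here: the whole argument is a short explicit identity for $\pi$ and $P$ followed by path reversal, and the only delicate point is this boundary bookkeeping, which is handled uniformly by the vanishing of $P(0,-1)$.
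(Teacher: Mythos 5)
Your proposal is correct and follows essentially the same route as the paper: the single-step identity $\pi_k P(k,k+\Delta)=q^{-\Delta}\pi_{k+\Delta}P(k+\Delta,k)$ is exactly the paper's $q$-skew detailed balance relation $\pi_kP(k,\ell)=q^{k-\ell}\pi_\ell P(\ell,k)$, and the telescoping along paths, the change of variable $m=k+s$, and the boundary bookkeeping via vanishing of paths that hit negative states all match the paper's argument.
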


\begin{remark}
    In the subsequent computation, the transition probability takes the form 
    \[
    1 - q^{k+\sum_{i=1}^{k} s_i},
    \]
    where the exponent \(k+\sum_{i=1}^{k} s_i\) could, a priori, be negative. By definition of \(Q_i\), however, the transition probability is only defined on $\mathbb{Z}^{+}$. One might therefore ask whether additional restrictions on the path \(\left(s_i\right)\) are required to guarantee positivity of \(Q_i\). Such restrictions are unnecessary: we extend the definition of weighted paths to the case when \(Q_i\) becomes negative. Indeed, whenever the path crosses the \(x\)-axis, the product acquires a factor $1 - q^{0} = 0$ ,which forces the total weight of that path to vanish. Consequently, paths with negative values of \(Q_i\) make no contribution to the overall sum. 
\end{remark}

\begin{proof}
We first prove a $q$--skew detailed balance identity. For all $k\ge 1$ and $\ell\in\{k-1,k,k+1\}$ one has
\begin{equation}\label{eq:skew-db}
\pi_kP\left(k,\ell\right) = q^{k-\ell}\pi_\ell P\left(\ell,k\right).
\end{equation}
Indeed, using $\left(q;q\right)_{k+1}=\left(1-q^{k+1}\right)\left(q;q\right)_k$ we obtain
\[
\frac{\pi_{k+1}}{\pi_k}=\frac{q}{1-q^{k+1}}.
\]
If $\ell=k+1$, then
\[
\pi_k P\left(k,k+1\right)=\pi_k x\left(1-x\right)
= q^{-1}\pi_{k+1}\bigl[x\left(1-x\right)\left(1-q^{k+1}\right)\bigr]
= q^{-1}\pi_{k+1}P\left(k+1,k\right).
\]
If $\ell=k-1$, then
\[
\pi_k P\left(k,k-1\right)=\pi_k x\left(1-x\right)\left(1-q^k\right)
= q\pi_{k-1}x\left(1-x\right)=q\pi_{k-1}P\left(k-1,k\right).
\]
The case $\ell=k$ is immediate. This proves \eqref{eq:skew-db}.

Now consider any path $x_0,\dots,x_n$ such that $|x_i-x_{i-1}|\le1$. Multiplying \eqref{eq:skew-db} along the edges of the path yields
\[
\pi_{x_0}\prod_{i=1}^n P\left(x_{i-1},x_i\right)
= \Biggl(\prod_{i=1}^n q^{x_{i-1}-x_i}\Biggr)
\pi_{x_n}\prod_{i=1}^n P\left(x_i,x_{i-1}\right)
= q^{x_0-x_n}\pi_{x_n}\prod_{i=1}^n P\left(x_i,x_{i-1}\right).
\]
Equivalently, if we let $x_0=k$ and $x_i=k+\sum_{j=1}^i s_j$, we have $x_n=k+s$ and therefore
\[
\mathbb{P}\left(Q_1=k,Q_2=k+s_1,\dots,Q_{n+1}=k+s\right)
= q^{-s}\mathbb{P}\left(Q_1=k+s,Q_2=k+s-s_n,\dots,Q_{n+1}=k\right).
\]
Summing over $k\ge0$ and letting $m=k+s$ on the right-hand side gives
\begin{equation}
\sum_{k\ge0}\mathbb{P}\left(Q_1=k,Q_2=k+s_1,\dots,Q_{n+1}=k+s\right) = q^{-s}\sum_{m\ge0}\mathbb{P}\left(Q_1=m, Q_2=m-s_n,\dots,Q_{n+1}=m-s\right).
\end{equation}
Renaming $m$ back to $k$ yields the desired identity \eqref{eq:main}. Any path that would visit a negative state has zero probability, so the index shift is valid without further conditions. This completes the proof.
\end{proof}

We introduce a coupling lemma that allows us to express the random variable $\mathcal{U}_n$ in a more tractable form. Fix $\frac{1+U_0}{2} = x$. Our goal is to compute the conditional expectation $\mathbb{E}\left[\mathcal{U}_n | U_0 = x\right]$, which we will denote concisely by $\mathbb{E}_x\left[\mathcal{U}_n\right]$. 

We define a homogeneous random walk $\{P_n\}_{n\geq 1}$ with initial condition
\[
P_1 = Q_1,
\]
and transition probabilities
\[
\begin{cases}
P_{i+1} = P_i + 1 & \text{with prob. } x\left(1-x\right),\\[6pt]
P_{i+1} = P_i     & \text{with prob. } x^2+\left(1-x\right)^2,\\[6pt]
P_{i+1} = P_i - 1 & \text{with prob. } x\left(1-x\right).
\end{cases}
\]

We couple the processes $Q$ and $P$ using the same family of i.i.d.\ random variables $\{R_i\}_{i\ge 1}$, where each $R_i \sim \mathrm{Unif}\left[0,1\right]$. The joint evolution is as follows:

\begin{itemize}
    \item If $R_i < x\left(1-x\right)$, then both chains move up:
    \[
    P_{i+1} = P_i+1, \qquad Q_{i+1} = Q_i+1.
    \]

    \item If $R_i \in \left(x\left(1-x\right), 1-x\left(1-x\right)\right)$, then both chains remain unchanged:
    \[
    P_{i+1} = P_i, \qquad Q_{i+1} = Q_i.
    \]

    \item If $R_i > 1-x\left(1-x\right)$, then $P$ always steps down:
    \[
    P_{i+1} = P_i - 1,
    \]
    while $Q$ behaves differently:
    \[
    Q_{i+1} =
    \begin{cases}
    Q_i, & \text{with prob. } q^{Q_i}, \\
    Q_i - 1, & \text{with prob. } 1-q^{Q_i}.
    \end{cases}
    \]
\end{itemize}

In this last case, if $Q$ refuses to step down (i.e.\ $Q_{i+1}=Q_i$), then $i \in \mathcal U$. Thus the random set $\mathcal U$ records exactly the indices at which a discrepancy between the coupled chains occurs.
\begin{lemma}
    We have:
\begin{enumerate}
    \item For all $n\ge 1$,
    \begin{equation}
        \mathcal U_n = Q_{n+1} - P_{n+1} \quad \text{almost surely}.
    \end{equation}

    In particular,
    \begin{equation}\label{coupling}
        \mathcal U_n \overset{\left(d\right)}{=} Q_{n+1} - P_{n+1}.
    \end{equation}
    \item Taking expectations,
    \begin{equation}
        \mathbb E_x\left[\mathcal U_n\right] = \mathbb E_x\left[Q_{n+1} - Q_1\right].\label{Ecoupling}
    \end{equation}
\end{enumerate}
\end{lemma}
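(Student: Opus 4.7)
The plan is to show the almost-sure identity (1) by observing that the coupling makes the increment of $Q_i - P_i$ coincide with the indicator $\mathbf{1}_{i \in \mathcal{U}}$, and then deduce (2) by taking expectations and exploiting that $P$ is a mean-zero random walk.

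First I would analyze each of the three coupled cases. When $R_i < x\left(1-x\right)$ both chains step up together, and when $R_i$ lies in the middle interval both remain in place; in both cases the difference $Q_i - P_i$ is unchanged and $i \notin \mathcal{U}$. When $R_i > 1 - x\left(1-x\right)$, $P$ decreases by $1$ deterministically, while $Q$ stays put with probability $q^{Q_i}$ (placing $i \in \mathcal{U}$) and decreases by $1$ with probability $1 - q^{Q_i}$ (keeping $i \notin \mathcal{U}$); the first subcase increases $Q - P$ by one, and the second leaves it unchanged. In all subcases,
\[
\left(Q_{i+1} - P_{i+1}\right) - \left(Q_i - P_i\right) = \mathbf{1}_{i \in \mathcal{U}}.
\]
Telescoping from $i = 1$ to $n$ and using $Q_1 = P_1$ immediately yields $\mathcal{U}_n = Q_{n+1} - P_{n+1}$ almost surely, which is stronger than the equality in distribution recorded as (1).

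For (2), I would take expectations under $\mathbb{E}_x\left[\,\cdot\,\right]$. The process $P$ has i.i.d.\ mean-zero bounded increments independent of the initial value $P_1$, hence $\mathbb{E}_x\left[P_{n+1}\right] = \mathbb{E}_x\left[P_1\right] = \mathbb{E}_x\left[Q_1\right]$. Combining this with the almost-sure identity from (1) and the bound $\mathbb{E}\left[Q_1\right] < \infty$ from Proposition \ref{prop.finite} (which justifies separating the finite expectations) gives
\[
\mathbb{E}_x\left[\mathcal{U}_n\right] = \mathbb{E}_x\left[Q_{n+1}\right] - \mathbb{E}_x\left[P_{n+1}\right] = \mathbb{E}_x\left[Q_{n+1} - Q_1\right].
\]

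The only nontrivial point to check is that the boundary case $Q_i = 0$ causes no inconsistency: then $q^{Q_i} = 1$ forces $Q_{i+1} = 0$ and $i \in \mathcal{U}$, while $P$ drops to $P_i - 1$ and may become negative; this is consistent because $P$ is defined as a random walk on $\mathbb{Z}$ with no positivity constraint, so no special care is needed. Apart from this bookkeeping and the mild integrability check for part (2), the argument is a routine telescoping identity and I do not anticipate any substantive obstacle.
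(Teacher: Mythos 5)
Your proof is correct and follows essentially the same route as the paper: part (1) is exactly the observation that the coupling makes the increment of $Q_i - P_i$ equal to $\mathbf{1}_{i\in\mathcal U}$ (the paper states this as immediate from the construction, and your case analysis plus telescoping with $Q_1=P_1$ fills in the details), and part (2) uses the same mean-zero property of $P$ together with $P_1=Q_1$. The extra remarks on the boundary case $Q_i=0$ and on integrability are harmless and consistent with the paper's argument.
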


\begin{proof}
    \eqref{coupling} is direct from the construction of the coupling argument. 
    
    For the expectation \eqref{Ecoupling}, note that $P$ is a homogeneous random walk with the zero mean of the increments. We get
    \[
    \mathbb E\left[\mathcal U_n\right] = \mathbb E\left[Q_{n+1}-P_{n+1}\right] = \mathbb E\left[Q_{n+1}\right] - \mathbb E\left[P_1\right] = \mathbb E\left[Q_{n+1}\right] - \mathbb E\left[Q_1\right],
    \]
    which is the claimed identity. 
\end{proof}

The coupling enjoys a nice property in the TASEP case. 
\begin{lemma}
    When $q = 0$, we have 
    \begin{equation}
        Q_n \overset{\left(d\right)}{=} \Bigl|P_n + \frac{1}{2}\bigr| - \frac{1}{2}.\label{couple2}
    \end{equation}
\end{lemma}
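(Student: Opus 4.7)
The plan is to exploit the fact that at $q=0$ the queue chain collapses to a symmetric simple random walk reflected at the origin, and that the map $\phi\left(k\right):=\left|k+\tfrac12\right|-\tfrac12$, which identifies the pair $\{k,-k-1\}$ with the nonnegative integer $k$, realises this reflection at the level of one-dimensional marginals of $P$.

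The first step is to specialise everything at $q=0$. From Proposition \ref{prop.finite} the initial law degenerates to $\pi_k=\mathbf 1_{k=0}$, so that $Q_1=P_1=0$ almost surely. The transitions of $Q$ simplify accordingly: for $k\ge 1$ the chain moves by $+1,0,-1$ with rates $x\left(1-x\right)$, $1-2x\left(1-x\right)$, $x\left(1-x\right)$, while at $k=0$ the down-step is suppressed, giving $0\to1$ with probability $x\left(1-x\right)$ and $0\to0$ with probability $1-x\left(1-x\right)$. The walk $P$ retains the symmetric rates on all of $\mathbb Z$ with no reflection. I then reformulate the desired identity as the equivalent claim
\[
\mathbb{P}\left(Q_n=k\right)=\mathbb{P}\left(P_n=k\right)+\mathbb{P}\left(P_n=-k-1\right)\qquad\text{for all }k\ge 0,\ n\ge 1,
\]
which I prove by induction on $n$. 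The base case $n=1$ is immediate from $Q_1=0=P_1$.

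For the inductive step, I substitute the hypothesis into the one-step update of $Q$ and regroup. For $k\ge 1$ the eight resulting terms reassemble cleanly into $\mathbb{P}\left(P_{n+1}=k\right)+\mathbb{P}\left(P_{n+1}=-k-1\right)$, because the symmetric $P$-transition rule produces the same up/stay/down rates from $k$ and from $-k-1$. The only delicate step, and the place where I expect the main obstacle, is the boundary $k=0$: one has to check that the suppressed down-step of $Q$ at $0$ corresponds exactly to the combined $P$-flows into $\{0,-1\}$. The cancellation that makes this work is internal: when $P$ transitions from $0$ to $-1$ or vice versa, the value of $\phi\left(P\right)$ does not change, and these two half-contributions together account precisely for the extra holding mass at $0$. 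A short computation confirms
\[
\mathbb{P}\left(P_{n+1}=0\right)+\mathbb{P}\left(P_{n+1}=-1\right)=\left(1-x\left(1-x\right)\right)\left[\mathbb{P}\left(P_n=0\right)+\mathbb{P}\left(P_n=-1\right)\right]+x\left(1-x\right)\left[\mathbb{P}\left(P_n=1\right)+\mathbb{P}\left(P_n=-2\right)\right],
\]
which is exactly the one-step update for $\mathbb{P}\left(Q_{n+1}=0\right)$ under the hypothesis. The conceptual point to highlight is that the shift by $\tfrac12$ in $\phi$ is not cosmetic: reflecting through $0$ would double-count the state $0$ and fail to reproduce the holding rate $1-x\left(1-x\right)$, whereas pairing $0$ with $-1$ absorbs one unit of $x\left(1-x\right)$ into the internal exchange between the two states and produces the correct boundary dynamics.
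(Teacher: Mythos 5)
Your argument is correct, and it rests on the same key idea as the paper's proof: the folding map $\phi\left(k\right)=\bigl|k+\tfrac12\bigr|-\tfrac12$ pairing each nonnegative integer $k$ with $-k-1$, with the boundary pair $\{0,-1\}$ absorbing the suppressed down-step of $Q$ at the origin. The difference is in how the identification is verified. The paper checks lumpability directly: it shows that $\widetilde Q_n:=\phi\left(P_n\right)$ is itself a Markov chain whose transition probabilities coincide with those of $Q_n$ (the transition rates out of each block $\{k,-1-k\}$ are the same from either representative). You instead prove the one-dimensional marginal identity $\mathbb{P}\left(Q_n=k\right)=\mathbb{P}\left(P_n=k\right)+\mathbb{P}\left(P_n=-k-1\right)$ by induction on $n$, and your bookkeeping at the boundary $k=0$ is exactly right. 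The one thing to be aware of is that your induction delivers only equality of the fixed-time marginals, which is what the lemma literally asserts, whereas the lumping argument gives equality of the two processes $\left(Q_n\right)_n$ and $\left(\phi\left(P_n\right)\right)_n$ jointly in $n$. That stronger, process-level statement is what the paper actually invokes in the proof of Theorem \ref{thm.normal}, where convoy membership is identified with the two-time events $\{P_i=0,P_{i+1}=-1\}$ and $\{P_i=-1,P_{i+1}=0\}$; to support that application you would want to upgrade your induction to finite-dimensional distributions, or simply observe (as the paper does) that your one-step computation already shows $\phi\left(P_n\right)$ is Markov with the transition kernel of $Q$, which gives the process-level claim for free.
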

\begin{proof}
    The proof is based on a lumping of Markov chains. Partition \(\mathbb{Z}\) into pairs \(B_k:=\{k,-1-k\}\) for \(k\in\mathbb{Z}_{\ge 0}\), and define the folding map
    \[
    f:\mathbb{Z}\to\mathbb{Z}_{\ge 0},\qquad f\left(p\right):=\left|p+\tfrac{1}{2}\right|-\tfrac{1}{2}.
    \]
    We show that \(\widetilde Q_n:=f\left(P_n\right)\) is a Markov chain with the same transition probabilities as \(\left(Q_n\right)\). This can be done by a direct computation, for simplicity, we only show the case when $\widetilde Q_n = 0$.

    If $\widetilde Q_n = 0$, then \(P_n\in\{0,-1\}\) and \(P_{n+1}\) takes values in \(\{1,0,-1\}\) or \(\{0,-1,-2\}\), respectively, with probabilities \(x\left(1-x\right),x^2+\left(1-x\right)^2,x\left(1-x\right)\). In both situations, applying \(f\) shows that
    \[
    \mathbb{P}\left(\widetilde Q_{n+1}=1\mid \widetilde Q_n=0\right)=x\left(1-x\right),\qquad \mathbb{P}\left(\widetilde Q_{n+1}=0\mid \widetilde Q_n=0\right)=1-x\left(1-x\right).
    \]
\end{proof}
\subsection{Asymptotic size of the TASEP convoy}
In this subsection we focus on the TASEP convoy, where stronger results are available. Since our interest is restricted to the TASEP setting, we henceforth fix $q=0$ (only in this subsection). From this section to simplify the notation, we conventionally denote the speed of particle $0$ by $u$, $x = \frac{1+u}{2}$, and $c = x\left(1-x\right)$. 
\begin{theorem}\label{thm.normal}
    Given the speed of particle, the number of particles $\left[1, n \right]$ in the $\mathcal{C}_0$ of TASEP is asymptotically a folded normal random variable, more specifically,
    \begin{equation}
        \lim_{n \to \infty}\frac{\# \mathcal{C}_0^n}{\sqrt{n}} \overset{\left(d\right)}{=} |\mathcal{N}\left(0,2x\left(1-x\right)\right)|.\label{tasepconvoy}
    \end{equation}
\end{theorem}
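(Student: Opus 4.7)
The plan is to combine Martin's characterization of the convoy (Theorem \ref{construction}) with the explicit coupling of $Q$ and $P$ introduced just before Lemma 2.6, which in the special case $q=0$ collapses to a simple ``reflection at zero'' picture. The conclusion will then follow from Donsker's invariance principle and Lévy's reflection identity for Brownian motion.

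By Theorem \ref{construction} (equation \eqref{randomu}), it suffices to prove that $\mathcal{U}_n^{x,x}/\sqrt{n}$ converges in distribution to $|\mathcal{N}(0,2c)|$, where $c=x(1-x)$. First I would observe that for $q=0$ the initial distribution \eqref{lawofpi} degenerates to $\pi_0=1$, so $Q_1=P_1=0$ almost surely. Second, in the coupling that produces \eqref{coupling} the quantity $q^{Q_i}$ reduces to $\mathbf{1}_{\{Q_i=0\}}$; thus whenever $R_i>1-c$ the walk $P$ decreases by one, while $Q$ decreases by one if $Q_i>0$ and stays put if $Q_i=0$. In all other cases $Q$ and $P$ move identically. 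Consequently, writing $L_n:=Q_n-P_n$, the sequence $L_n$ is non-decreasing, starts at $0$, and increases by one precisely at those indices where $P$ attains a new negative minimum. Since $P_1=0$, this yields the pathwise identity
\begin{equation*}
L_n=-\min_{1\le k\le n}P_k,
\qquad\text{hence}\qquad
\mathcal{U}_n=Q_{n+1}-P_{n+1}=-\min_{1\le k\le n+1}P_k.
\end{equation*}

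Now $(P_i)_{i\ge 1}$ is a lazy symmetric random walk with i.i.d.\ $\{-1,0,+1\}$ increments of probabilities $c, 1-2c, c$, so the increments have mean $0$ and variance $2c$. By Donsker's invariance principle, the linear interpolation of $P_{\lfloor nt\rfloor}/\sqrt{n}$ converges weakly in $C[0,1]$ to $\sqrt{2c}\,B_t$. The functional $\omega\mapsto-\min_{t\in[0,1]}\omega(t)$ is $1$-Lipschitz on $C[0,1]$, and the discrepancy between $\min_{k\le n+1}$ and the continuous minimum of the interpolation is $O(1/\sqrt n)$, so by the continuous mapping theorem
\begin{equation*}
\frac{\mathcal{U}_n}{\sqrt{n}}\xrightarrow[n\to\infty]{d}-\min_{t\in[0,1]}\sqrt{2c}\,B_t.
\end{equation*}
Finally, the classical Lévy identity gives $-\min_{t\in[0,1]}B_t\stackrel{d}{=}|B_1|$, whence $\mathcal{U}_n/\sqrt{n}\xrightarrow{d}\sqrt{2c}\,|B_1|\stackrel{d}{=}|\mathcal{N}(0,2c)|$, which is exactly \eqref{tasepconvoy}.

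The conceptual heart of the argument, and the only nontrivial step, is the identification $\mathcal{U}_n=-\min_{k\le n+1}P_k$: it requires unpacking the coupling carefully in the degenerate case $q=0$, where the ``queue length'' $Q$ is forced to coincide with the reflected walk $P-\min(0,\min_k P_k)$. After that, only textbook tools (Donsker and the reflection principle) are needed; no delicate estimates enter. A minor sanity check worth recording is that the resulting marginal $Q_n\stackrel{d}{=}-\min_{k\le n}P_k$ is consistent with the lumping identity \eqref{couple2} of Lemma 2.7, since both walks have the same law by a standard discrete Lévy-type argument.
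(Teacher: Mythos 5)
Your proposal is correct, and it takes a genuinely different (and arguably cleaner) route than the paper. The paper works with the lumping identity \eqref{couple2}, $Q_n \overset{(d)}{=} |P_n+\tfrac12|-\tfrac12$, identifies convoy indices with transitions of $P$ between the levels $0$ and $-1$, shows by a martingale $L^2$ estimate that the crossing counts $D_n, E_n$ each concentrate around $c\,L_n(0)$, and then invokes the invariance principle for local times of random walks together with Lévy's identity $\ell_1(0)\overset{d}{=}|\mathcal N(0,1)|$. You instead unpack the coupling of $Q$ with $P$ directly: at $q=0$ the initial law \eqref{lawofpi} degenerates to $\delta_0$ and the update rule is exactly the Lindley/Skorokhod reflection, giving the \emph{pathwise} identity $\mathcal U_n=Q_{n+1}-P_{n+1}=-\min_{1\le k\le n+1}P_k$ (your verification is sound: $L_n$ increments exactly at new strict minima of $P$, and since the increments are bounded by $1$ every new minimum arises from a down-step taken at the current minimum). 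From there only Donsker, the continuous mapping theorem for the minimum functional, and the reflection principle $-\min_{[0,1]}B\overset{d}{=}|B_1|$ are needed. What your route buys is the avoidance of the less standard local-time invariance principle and of the martingale concentration step; it replaces an asymptotic identification ($\#\mathcal C_0^n\approx 2cL_n(0)$) by an exact one. What the paper's route makes more visible is precisely where the argument breaks for $q>0$: the local-time formulation shows that for general $q$ convoy indices are produced at every level of $Q$ with level-dependent probabilities, which is the obstruction noted in the remark following the corollary. The two are of course dual via the discrete Lévy equivalence between $(P_n-\min_{k\le n}P_k)$ and the folded walk, as your closing sanity check observes.
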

\begin{proof}
    We define 
    \[
    D_n:=\#\{0\le k<n:\ P_k=0,\ P_{k+1}=-1\},
    \] as the number of down-cross from $0$ to $-1$ in $P_n$ in the first $n$ steps, and similarly
    \[
    E_n:=\#\{0\le k<n:\ P_k=-1,\ P_{k+1}=0\}.
    \]
    From the lumping argument \eqref{couple2}, having a particle $i$ in the convoy is equivalent to $Q_i = Q_{i+1} = 0$ and the down-cross of $Q_i$ is blocked. This is the union the $\{P_i = 0, P_{i+1} =-1\}$ and $\{P_i = -1, P_{i+1} = 0\}$ in terms of random walk $P_n$.
    Thus, 
    \[
    \# \mathcal{C}_0^n = D_n+E_n.
    \]
    To study the properties of $D_n$ and $E_n$, we introduce local time of a discrete random walk. For $n \geq 1$, we denote by 
    \[
    L_n\left(0\right) := \sum_{k=0}^{n-1} \mathbf{1}_{\{P_k = 0\}}
    \]
    the local time at $0$ up to time $n$, i.e., the total number of visits of the random walk to the origin before step $n$. 
    Denote by $\mathcal F_k$ the $\sigma$-algebra $\sigma\left(P_0,\dots,P_k\right)$. By direct computation, we have $\mathbb E\left[D_n - cL_n\left(0\right)\mid \mathcal F_{n-1}\right] = D_{n-1} - cL_{n-1}\left(0\right)$, hence $D_n - cL_n\left(0\right)$ is a martingale with respect to $\mathcal F_n$. 
    Further computation and the tower property yields
    \[
    \mathbb E\left[D_n -cL_n\left(0\right)\right]=0,\qquad
    \mathbb E\left[\left(D_n-cL_n\left(0\right)\right)^2\right]=c\left(1-c\right)\mathbb E\left[L_n\left(0\right)\right].
    \]
    Since $\mathbb E\left[L_n\left(0\right)\right]=\Theta\left(\sqrt n\right)$ for a recurrent aperiodic mean-zero finite-variance walk on $\mathbb Z$, we have
    \(
    \mathbb E\big[\left(\left(D_n-cL_n\left(0\right)\right)/\sqrt n\right)^2\big] = c\left(1-c\right)\mathbb E\left[L_n\left(0\right)\right]/n = O\left(n^{-1/2}\right)\to 0,
    \)
    which implies $\left(D_n-cL_n\left(0\right)\right)/\sqrt n \to 0$ in $L^2$ and hence in probability. Similarly, $\left(E_n-cL_n\left(0\right)\right)/\sqrt n \overset{P}{\to} 0$.
    
    The invariance principle for local times we use can be found in e.g. \cite{borodin1982asymptotic}. Specifically, by \cite[Theorem~1.1]{borodin1982asymptotic}, for a recurrent aperiodic mean-zero random walk with finite variance $\sigma^2$, the rescaled local time satisfies
    \[
    \frac{\sigma L_n\left(0\right)}{\sqrt{n}} \Rightarrow\ell_1\left(0\right),
    \]
    where $\ell_1\left(0\right)$ denotes the Brownian local time at~$0$ up to time~$1$.

    By Lévy’s identity, $\ell_1\left(0\right)$ has the same distribution as $|\mathcal{N}\left(0,1\right)|$, a folded normal random variable. We obtain
    \[
    \lim_{n \to \infty} \frac{\# \mathcal{C}_0^n} {\sqrt{n}} = \lim_{n \to \infty} \frac{D_n + E_n} {\sqrt{n}} \overset{d}{=} \lim_{n \to \infty} \frac{2x\left(1-x\right) L_0\left(n\right)}{\sqrt{n}} \overset{d}{=} |\mathcal{N}\left(0,2x\left(1-x\right)\right)|.
    \]
\end{proof}
\begin{corollary}
    Fix speed $U_0 = 2x -1$. The expected size of the convoy in the TASEP speed process, conditioned on the initial speed $U_0$, satisfies
    \begin{equation}
        \lim_{n \to \infty}\frac{\mathbb{E}_x\left[\# \mathcal{C}_0^n\right]}{\sqrt{n}} = \sqrt{\frac{4x\left(1-x\right)}{\pi}}.\label{fix}
    \end{equation}
    In particular, the unconditional expectation is given by
    \begin{equation}
        \lim_{n \to \infty}\frac{\mathbb{E}\left[\# \mathcal{C}_0^n\right]}{\sqrt{n}} = \frac{\sqrt{\pi}}{4}.\label{nonfix}
    \end{equation}
\end{corollary}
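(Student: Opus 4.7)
The plan is to derive the corollary in two stages from Theorem \ref{thm.normal}. The first stage upgrades the conditional distributional convergence to convergence of conditional expectations, which yields \eqref{fix}. The second stage integrates against the uniform law of $U_0$ to obtain \eqref{nonfix}.

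For the first stage, the mean of the folded normal $|\mathcal{N}(0,\sigma^2)|$ is $\sigma\sqrt{2/\pi}$, so plugging in $\sigma^2=2x(1-x)$ produces the desired constant $\sqrt{4x(1-x)/\pi}$. To promote convergence in distribution to $L^1$ convergence, it suffices to establish uniform integrability of $\{\#\mathcal{C}_0^n/\sqrt n\}_{n\ge 1}$; I will do this via an $L^2$ bound. Using the decomposition $\#\mathcal{C}_0^n=D_n+E_n$ from the proof of Theorem \ref{thm.normal} and the pathwise bounds $D_n\le L_n(0)$, $E_n\le L_n(-1)$, the matter reduces to $\sup_n \mathbb{E}_x[L_n(0)^2]/n<\infty$, which is a standard estimate for the recurrent lazy walk $P$ at any fixed $x\in(0,1)$. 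Vitali's theorem then converts the distributional convergence into convergence of expectations.

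For the second stage, I will integrate over $x$ via Fubini and pass the limit inside using dominated convergence:
\[
\frac{\mathbb{E}[\#\mathcal{C}_0^n]}{\sqrt n}=\int_0^1\frac{\mathbb{E}_x[\#\mathcal{C}_0^n]}{\sqrt n}\,dx.
\]
To justify DCT, I need an upper bound on $\mathbb{E}_x[\#\mathcal{C}_0^n]/\sqrt n$ that is uniform in both $n$ and $x\in[0,1]$. Because a convoy element at step $i$ occurs exactly when a down-step attempt is refused and $Q_i=0$, and since the lumping identity of Section \ref{sec.prop} gives $\{Q_i=0\}=\{P_i\in\{0,-1\}\}$, one has $\mathbb{E}_x[\#\mathcal{C}_0^n]=x(1-x)\sum_{i=1}^n\mathbb{P}_x[P_i\in\{0,-1\}]$. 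A routine local-CLT bound $\mathbb{P}_x[P_i\in\{0,-1\}]\le C\min\{1,1/\sqrt{i\,x(1-x)}\}$ for the lazy walk with step variance $2x(1-x)$ then yields $\mathbb{E}_x[\#\mathcal{C}_0^n]/\sqrt n\le C'\sqrt{x(1-x)}+o(1)$, uniformly bounded by a constant on $[0,1]$. DCT together with the Beta integral $\int_0^1\sqrt{x(1-x)}\,dx=\pi/8$ yields
\[
\lim_{n\to\infty}\frac{\mathbb{E}[\#\mathcal{C}_0^n]}{\sqrt n}=\int_0^1\sqrt{\tfrac{4x(1-x)}{\pi}}\,dx=\frac{2}{\sqrt\pi}\cdot\frac{\pi}{8}=\frac{\sqrt\pi}{4}.
\]

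The main technical point is producing a domination valid uniformly in $x$, since both the walk's variance $2x(1-x)$ and the target constant $\sqrt{4x(1-x)/\pi}$ degenerate at the endpoints $x\in\{0,1\}$. The local-CLT bound absorbs this degeneracy because the vanishing of the down-step probability precisely compensates the blow-up in the normalized local time as $x\to 0,1$, so no separate treatment of the boundary regions is required.
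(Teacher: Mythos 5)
Your proposal follows the same route as the paper's proof: extract the conditional limit \eqref{fix} by taking the mean of the folded normal limit from Theorem \ref{thm.normal}, then obtain \eqref{nonfix} by integrating over the uniform law of $x=\tfrac{1+U_0}{2}$ on $\left[0,1\right]$, using $\int_0^1\sqrt{x\left(1-x\right)}\,dx=\pi/8$. The only difference is that you explicitly verify the two interchanges of limit and expectation (uniform integrability via the $L^2$ local-time bound $D_n\le L_n\left(0\right)$, $E_n\le L_n\left(-1\right)$, and a domination uniform in $x$ via the representation $\mathbb{E}_x\left[\#\mathcal{C}_0^n\right]=x\left(1-x\right)\sum_{i=1}^n\mathbb{P}_x\left(P_i\in\{0,-1\}\right)$ with a concentration/local-CLT bound), which the paper's proof leaves implicit; these justifications are correct and strengthen rather than change the argument.
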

\begin{proof}
    The expectation of a folded Gaussian $|\mathcal{N}\left(0,\sigma\right)|$ is $\sigma \sqrt{\frac{2}{\pi}}$. Using \eqref{tasepconvoy}, we obtain
    \[
    \lim_{n \to \infty}\frac{\mathbb{E}_x\left[\# \mathcal{C}_0^n\right]}{\sqrt{n}} = \mathbb{E}\left[|\mathcal{N}\left(0,2x\left(1-x\right)\right)|\right] =  \sqrt{2x\left(1-x\right)}\sqrt{\frac{2}{\pi}} = \sqrt{\frac{4x\left(1-x\right)}{\pi}}.
    \]
    For non fixed speed $U_0$, note that $U_0$ is uniformly distributed on $\left[-1,1\right]$. Consequently, $x = \frac{1+U_0}{2}$ is uniformly distributed on $ \left[0,1\right]$. Hence, 
    \[
    \lim_{n \to \infty}\frac{\mathbb{E}\left[\# \mathcal{C}_0^n\right]}{\sqrt{n}} = \lim_{n \to \infty}\frac{\mathbb{E}\left[\mathbb{E}_x\left[\# \mathcal{C}_0^n\right]\right]}{\sqrt{n}} = \int_{0}^{1}\sqrt{\frac{4x\left(1-x\right)}{\pi}} dx = \frac{\sqrt{\pi}}{4}.
    \]
\end{proof}
\begin{remark}
    In the following sections, we restrict our proofs to the case of fixed $x$. To obtain \eqref{nonfix} as stated in the introduction, one can start from \eqref{fix} and proceed in exactly the same way: the only additional step is to integrate with respect to $x$, which is uniformly distributed on $\left[0,1\right]$. For simplicity, we omit this integration in the subsequent proofs. 
\end{remark}
\begin{remark}
    We cannot find an analogue proof for general $q$. The main difficulty is with parameter $q$ involved, $Q_i$ can produce a particle in the convoy at any level line with different probabilities. However, for $q = 0$ case, only the local time at level $0$ matters.
\end{remark}
\subsection{An exact formula for the ASEP convoy}
In this section, we formulate a exact formula for the expectation of the size of the ASEP convoy for fixed $q\in [0,1)$. In order to formally state the main theorem, we introduce q-Gandhi polynomials. 
\begin{definition}[\cite{han1999q}, p.2 (3) ]\label{Genocchi}
    Denote $\Delta_q$ as the q-Hahn operator, which is a q-analogue of difference operator on the polynomials.
    \[
    \Delta_q f\left(x\right) := \frac{f\left(1 + qx\right) - f\left(x\right)}{\left(1+qx\right) - x}.
    \]
    The q-Gandhi polynomials $B_n\left(x,q\right)$ are defined by the following recurrence, with $B_1\left(x,q\right) = 1$, and for $n \geq 2$,
    \[
    B_n\left(x,q\right) := \Delta_q\left(x^2  B_{n-1}\left(x,q\right)\right).
    \]
\end{definition}

\begin{remark}
    These q-Gandhi polynomials can be considered as a generalization of the well-known Gandhi polynomials by plugging in $q = 1$. On the other hand, if we plug in $x = 1$ in $B_n\left(x,q\right)$, the coefficients of the polynomials with respect to $q$ form the so called q-Genocchi numbers (A193762 in OEIS)
\end{remark}
\begin{example}We list the first several q-Gandhi polynomials and q-Genocchi numbers here as examples:
    
    The q-Gandhi polynomials:
    \begin{itemize}
        \item[] $B_1\left(x,q\right) = 1$, $B_2\left(x,q\right) = 1 + \left(1+q\right)x$,
        \item[] $B_3\left(x,q\right) = \left(2+q\right) + \left(2+ 4q + 2q^2\right)x + \left(1 + 2q + 2q^2 + q^3\right)x^2$,
    \end{itemize}
    
    The q-Genocchi numbers:
    \begin{itemize}
        \item[] $B_1\left(1,q\right) = 1$, $B_2\left(1,q\right) = 2+q$, $B_3\left(1,q\right) = 5 + 7q + 4q^2 + q^3$,
        \item[] $B_4\left(1,q\right) = 14 + 36q + 45q^2 + 35q^3 + 18q^4 + 6q^5 + q^6$.
    \end{itemize}
\end{example}

In this same paper \cite{han1999q}, the authors also compute the generating function of q-Genocchi numbers. 
\begin{proposition}[\cite{han1999q}, Corollary 5]\label{prop.genocchi}
    We have
    \[
    \sum_{n \geq 1} B_n\left(1,q\right) t^n = \sum_{n \geq 1}\frac{\left([n]_q!\right)^2 q^n t^n}{\left(q + \left[1\right]_q^2t\right)\left(q^2 + \left[2\right]_q^2t\right)\dots\left(q^n + \left[n\right]_q^2t\right)},
    \]
    where $\left[n\right]_q := 1 + q + \dots q^{n-1}$, $\left[n\right]_q! := \prod_{i=1}^{n} \left[i\right]_q $.  
\end{proposition}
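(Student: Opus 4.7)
The plan is to extract a continued-fraction expansion of the generating function of the q-Gandhi polynomials from the defining recurrence, and then specialize to $x = 1$. Set $F(x,t) := \sum_{n \geq 1} B_n(x,q)\, t^n$. Multiplying the recurrence $B_n = \Delta_q(x^2 B_{n-1})$ by $t^n$ and summing from $n = 2$, together with the initial condition $B_1(x,q) = 1$, yields the functional equation
\[
F(x,t) = t + t\,\Delta_q\bigl(x^2 F(x,t)\bigr).
\]
Because $\Delta_q$ only accesses the values of its argument at $x$ and $1 + qx$, iterating this equation unfolds along the orbit $x \mapsto 1 + qx \mapsto 1 + q(1 + qx) \mapsto \cdots$; starting at $x = 1$ this orbit runs through the $q$-integers $[k+1]_q$, which is precisely the source of the $[k]_q$ factors in the target formula.

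Next, I would convert the nested iteration into a generalized continued fraction. Solving the functional equation formally expresses $F(x,t)$ as a linear-fractional transformation of $F(1 + qx, t)$; substituting recursively then produces a continued fraction whose $n$-th partial convergent carries exactly the denominator $\prod_{k=1}^n (q^k + [k]_q^2 t)$. The claimed identity would then follow by showing that successive convergents differ by the corresponding summand $([n]_q!)^2 q^n t^n / \prod_{k=1}^n (q^k + [k]_q^2 t)$, i.e.\ by a standard telescoping identity for J-fractions. Setting $x = 1$ at the end collapses the orbit-dependent coefficients to the explicit entries $q^k$ and $[k]_q^2$ appearing in the proposition.

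The main obstacle I expect to hit is Step~2: because the factor $x^2$ sits inside $\Delta_q$, the effective recurrence coefficients along the iteration are rational functions in the shifted arguments, and one must verify that at $x = 1$ these telescope cleanly into the products displayed. A conceptually cleaner alternative is to identify $B_n(1,q)$ combinatorially first---for instance as a weighted count of Dumont-type permutations or labelled Motzkin-like paths with a suitable $q$-statistic---and then invoke the Flajolet correspondence between weighted lattice paths and J-fractions. This combinatorial route bypasses the algebraic iteration at the cost of constructing a path model, but the denominator product $\prod_{k=1}^n (q^k + [k]_q^2 t)$ then acquires a transparent interpretation as a normalization of step-weights at height $k$, and the overall sum becomes a length decomposition over Motzkin paths of length $2n$ with peak height at most $n$.
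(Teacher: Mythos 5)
The paper offers no proof of this proposition at all: it is imported verbatim from the cited work of Han (Corollary 5 of \cite{han1999q}), so there is no internal argument to compare against, and your derivation is genuinely additional content. Your strategy is essentially correct, and the obstacle you flag at the end is in fact a one-line computation rather than a real difficulty: writing the functional equation $F(x,t)=t+t\,\Delta_q\bigl(x^2F(x,t)\bigr)$ out and rearranging gives
\[
F(x,t)=\frac{t\bigl(1+(q-1)x\bigr)}{1+(q-1)x+tx^2}+\frac{t(1+qx)^2}{1+(q-1)x+tx^2}\,F(1+qx,t),
\]
and along the orbit starting at $x=1$ one has $x=[k]_q$, where $1+(q-1)[k]_q=q^k$ and $1+q[k]_q=[k+1]_q$, so the coefficients collapse exactly to
\[
F\bigl([k]_q,t\bigr)=\frac{t\,q^k}{q^k+[k]_q^2t}+\frac{t\,[k+1]_q^2}{q^k+[k]_q^2t}\,F\bigl([k+1]_q,t\bigr).
\]
The one imprecision in your write-up is the framing of the last step as a continued-fraction/J-fraction telescoping: the recursion above is affine in $F([k+1]_q,t)$, not genuinely fractional, so no convergent calculus is needed. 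Unfolding it $N$ times yields the first $N$ summands of the claimed series plus a remainder that is $O(t^{N+1})$ (each multiplier is $O(t)$ and $F=O(t)$), and letting $N\to\infty$ in the formal power series topology gives the identity; a sanity check at order $t^2$ gives $-1/q+(1+q)^2/q=2+q=B_2(1,q)$, matching the paper's example. The alternative combinatorial route via weighted lattice paths that you mention is viable (and closer in spirit to how such identities are often proved in the combinatorics literature), but it is strictly more work than the direct iteration, which already closes the argument.
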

In addition, by comparing the coefficient of $t^n$ on the both side of the generating function, we get an explicit formula for the q-Genocchi numbers.
\begin{proposition}
    For any $n\geq1$, we have
    \begin{equation}
        B_n\left(1,q\right)=\sum_{k=0}^{n-1}\left(-1\right)^{n-1-k}h_{n-1-k}\Big(\tfrac{\left[1\right]_q^2}{q},\tfrac{\left[2\right]_q^2}{q^2},\dots,\tfrac{\left[k+1\right]_q^2}{q^{k+1}}\Big)\big(\left[k{+}1\right]_q!\big)^{2}\prod_{j=1}^{k}q^{-j},\label{alter}
    \end{equation}
    where $h_k\left(x_1, \dots x_n\right)$ is the homogeneous symmetric function with degree $k$ and variable $x_1 \dots x_n$.
\end{proposition}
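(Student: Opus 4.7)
The plan is to extract the coefficient of $t^n$ directly from the generating function given in Proposition \ref{prop.genocchi}. Write the right-hand side as $\sum_{n'\geq 1} F_{n'}(t)$ where
\[
F_{n'}(t) = \frac{([n']_q!)^2 q^{n'} t^{n'}}{\prod_{j=1}^{n'}(q^j + [j]_q^2 t)}.
\]
First I would factor $q^j$ out of each denominator term to write $q^j + [j]_q^2 t = q^j\bigl(1 + ([j]_q^2/q^j) t\bigr)$, so the full product of $q^j$'s contributes $q^{n'(n'+1)/2}$. Combined with the numerator factor $q^{n'}$, this leaves an overall scalar of $q^{n' - n'(n'+1)/2} = q^{-n'(n'-1)/2} = \prod_{j=1}^{n'-1} q^{-j}$, yielding
\[
F_{n'}(t) = \frac{([n']_q!)^2\, t^{n'}}{q^{n'(n'-1)/2}} \cdot \prod_{j=1}^{n'} \frac{1}{1 + ([j]_q^2/q^j)\, t}.
\]

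Next I would apply the standard generating function identity for the complete homogeneous symmetric polynomials: for any indeterminates $a_1, \dots, a_r$,
\[
\prod_{i=1}^{r} \frac{1}{1 + a_i t} = \sum_{k\geq 0} (-1)^k h_k(a_1, \dots, a_r)\, t^k,
\]
which follows from $h_k$ being homogeneous of degree $k$ together with the usual $\prod (1-a_it)^{-1} = \sum h_k t^k$. Applied with $a_j = [j]_q^2/q^j$, this gives
\[
[t^{n-n'}] \prod_{j=1}^{n'} \frac{1}{1 + ([j]_q^2/q^j)\, t} = (-1)^{n-n'} h_{n-n'}\!\left(\tfrac{[1]_q^2}{q}, \dots, \tfrac{[n']_q^2}{q^{n'}}\right),
\]
so that
\[
[t^n] F_{n'}(t) = \frac{([n']_q!)^2}{q^{n'(n'-1)/2}} (-1)^{n-n'} h_{n-n'}\!\left(\tfrac{[1]_q^2}{q}, \dots, \tfrac{[n']_q^2}{q^{n'}}\right).
\]

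Finally I would sum over $n'$. Since each $F_{n'}(t)$ starts with $t^{n'}$, only $1 \leq n' \leq n$ contribute to $[t^n]$, and reindexing $k := n' - 1$ (so $k$ ranges over $0,\dots,n-1$ and $q^{n'(n'-1)/2} = q^{k(k+1)/2} = \prod_{j=1}^{k} q^j$) yields exactly the claimed formula \eqref{alter}. There is no real obstacle here; the only care needed is bookkeeping the exponent of $q$ extracted from the denominator, and confirming the sign $(-1)^{n-1-k}$ coming from $(-1)^{n-n'}$.
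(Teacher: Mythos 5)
Your proposal is correct and is essentially the paper's own proof: both factor $q^j$ out of each denominator factor (producing the scalar $q^{n'-\binom{n'+1}{2}}=\prod_{j=1}^{n'-1}q^{-j}$), apply the generating-function identity $\prod_i\left(1+a_it\right)^{-1}=\sum_k\left(-1\right)^k h_k\left(a_1,\dots\right)t^k$, and extract the coefficient of $t^n$. Your version just spells out the reindexing $k=n'-1$ and the sign bookkeeping more explicitly, which is fine.
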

\begin{proof}
    By factoring out$q^i$ of each denominator term, we get
    \[
    \sum_{n\ge 1} B_n\left(1,q\right)t^n=\sum_{n\ge 1}\big(\left[n\right]_q!\big)^2q^{n-\binom{n+1}{2}}t^n\prod_{i=1}^{n}\Big(1+\tfrac{\left[i\right]_q^2}{q^i}t\Big)^{-1}.
    \]
    By the generating function for homogeneous symmetric functions
    \[
    \prod_{i=1}^{r}\frac{1}{1+x_i z} =\sum_{m\ge 0}\left(-1\right)^m h_m\left(x_1,\dots,x_r\right)z^m,
    \]
    we have
    \[
    \prod_{i=1}^{n}\Big(1+\tfrac{\left[i\right]_q^2}{q^i}t\Big)^{-1}=\sum_{m\ge0}\left(-1\right)^m h_m\Big(\tfrac{\left[1\right]_q^2}{q},\tfrac{\left[2\right]_q^2}{q^2},\dots,\tfrac{\left[n\right]_q^2}{q^n}\Big)t^m.
    \]
    Hence
    \[
    \sum_{n\ge 1} B_n\left(1,q\right)t^n =\sum_{n\ge 1} \big(\left[n\right]_q!\big)^2 q^{n-\binom{n+1}{2}}t^n \sum_{m\ge0}\left(-1\right)^m h_m\Big(\tfrac{\left[1\right]_q^2}{q},\dots,\tfrac{\left[n\right]_q^2}{q^n}\Big)t^m.
    \]
    Extracting the coefficient of $t^n$ gives the desired result. 
\end{proof}
\begin{theorem}\label{thm.main}
    The expectation of the difference of $Q_{n+1}$ and $Q_1$ can be written as a linear combination of q-Genocchi numbers, we have
    \begin{equation}
        \mathbb{E}_{x}\left[Q_{n+1} - Q_1\right] = \sum_{k=1}^{n}  \left(-1\right)^{k-1}\left(1-q\right)^{2k-1} c^{k}  \binom{n}{k} B_k\left(1,q\right), \label{exact}
    \end{equation}
    where $c=x\left(1-x\right)$.
\end{theorem}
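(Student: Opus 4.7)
The approach is to reduce to computing $q$-exponential moments of the random walk $Q$, solve the resulting linear recurrence, and identify the answer with the generating function for q-Genocchi numbers in Proposition \ref{prop.genocchi}. First, the drift of $Q$ is
\[
\mathbb{E}_x[Q_{i+1}-Q_i\mid Q_i=k] \;=\; c\cdot 1 + c(1-q^k)\cdot(-1) \;=\; cq^k,
\]
so telescoping and the tower property give
\[
\mathbb{E}_x[Q_{n+1}-Q_1] \;=\; c\sum_{i=1}^n \mathbb{E}_x[q^{Q_i}],
\]
reducing the problem to a closed-form expression for the partial sum on the right.

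Second, I would compute the moments $m_i(\ell):=\mathbb{E}_x[q^{\ell Q_i}]$. Proposition \ref{prop.finite} together with the $q$-binomial identity $\sum_{k\ge 0}z^k/(q;q)_k = 1/(z;q)_\infty$ applied to $z=q^{\ell+1}$ yields the clean initial values
\[
m_1(\ell) \;=\; \frac{(q;q)_\infty}{(q^{\ell+1};q)_\infty} \;=\; (q;q)_\ell,\qquad \ell\ge 0,
\]
and a one-step conditioning on $Q_i$ using the three transition rules produces the linear recurrence
\[
m_{i+1}(\ell) \;=\; \alpha_\ell\, m_i(\ell) \;+\; \beta_\ell\, m_i(\ell+1),
\]
where $\alpha_\ell = 1+c(1-q)^2 q^{-\ell}[\ell]_q^2$ and $\beta_\ell = c(1-q^{-\ell})$. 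This recursion is upper-triangular in $\ell$ and can be iterated from the initial row $m_1$.

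The third and main step is to unfold this recurrence through a generating function in the time variable. Writing $M_\ell(t):=\sum_{i\ge 1}m_i(\ell)t^i$, the recurrence turns into the functional equation $(1-\alpha_\ell t)M_\ell(t) = t(q;q)_\ell + \beta_\ell t\, M_{\ell+1}(t)$, which can be iterated in $\ell$ to express $M_1(t)$ as an explicit infinite series. After the rational change of variable $s:=-c(1-q)^2 t/(1-t)$, the denominators $1-\alpha_\ell t$ transform into $(1-t)q^{-\ell}(q^\ell+[\ell]_q^2 s)$, while the products of $\beta_{\ell+j}$ and $(q;q)_{\ell+k}$ collapse into $([k]_q!)^2$ multiplied by explicit powers of $q$ and signs. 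The resulting series can then be matched term by term against Proposition \ref{prop.genocchi},
\[
\sum_{n\ge 1} B_n(1,q)s^n \;=\; \sum_{n\ge 1}\frac{([n]_q!)^2 q^n s^n}{\prod_{i=1}^n(q^i+[i]_q^2 s)}.
\]
Using $A(t):=\sum_n \mathbb{E}_x[Q_{n+1}-Q_1]t^n = c M_1(t)/(1-t)$ together with $\sum_{n\ge k}\binom{n}{k}t^n = t^k/(1-t)^{k+1}$, extraction of the coefficient of $t^n$ from the matched identity then produces \eqref{exact}.

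The main obstacle will be this final bookkeeping of signs and prefactors: aligning the $(1-q)^{2k-1}$, $(-1)^{k-1}$, $([k]_q!)^2$, and $q$-power factors that emerge from iterating the recurrence with those appearing in the Genocchi generating function. To keep the algebra manageable I would pass to the normalized quantities $\tilde m_i(\ell) := m_i(\ell)/(q;q)_\ell$, so that $\tilde m_1(\ell) \equiv 1$ and the recurrence depends only on $c$, $q$, and $[\ell]_q$; this isolates the analytic prefactor $(1-q)^{2k-1}c^k$ from the combinatorial factor $\binom{n}{k}B_k(1,q)$ and reduces the identification to a single algebraic identity between generating functions.
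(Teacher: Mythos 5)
Your first two steps coincide with the paper's: the drift/tower-property reduction is exactly \eqref{tower}, and your moment recurrence $m_{i+1}\left(\ell\right)=\alpha_\ell m_i\left(\ell\right)+\beta_\ell m_i\left(\ell+1\right)$ with $m_1\left(\ell\right)=\left(q;q\right)_\ell$ is exactly \eqref{tower2}--\eqref{q1} (your derivation of the initial values via Euler's identity is cleaner than the paper's induction from \eqref{martin}). You diverge in how the recurrence is unwound: the paper iterates it into complete homogeneous symmetric polynomials in the variables $1+ca_j$, expands these with the binomial formula \eqref{binomial}, and matches coefficients against the explicit formula \eqref{alter} for $B_n\left(1,q\right)$; you instead pass to a generating function in the time variable and try to match the unfolded series directly against the generating function of Proposition \ref{prop.genocchi}.

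The gap is in that final matching, and it is not mere bookkeeping of prefactors. Carrying out your unfolding, $\left(1-\alpha_\ell t\right)M_\ell=\left(q;q\right)_\ell t+\beta_\ell tM_{\ell+1}$ iterates to
\[
M_1\left(t\right)=\sum_{k\ge0}\frac{\left(q;q\right)_{k+1}t^{k+1}\prod_{j=1}^k\beta_j}{\prod_{j=1}^{k+1}\left(1-\alpha_jt\right)}
=-\frac{1}{c\left(1-q\right)}\sum_{m\ge1}\frac{[m]_q!\,[m-1]_q!\,q^ms^m}{\prod_{j=1}^m\left(q^j+[j]_q^2s\right)},
\qquad s=-\frac{c\left(1-q\right)^2t}{1-t},
\]
whereas the series from Proposition \ref{prop.genocchi} has $m$-th term $\frac{\left([m]_q!\right)^2q^ms^m}{\prod_{j=1}^m\left(q^j+[j]_q^2s\right)}$. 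The numerators differ by a factor $[m]_q$ (and after the extra factor of $s$ that the correct target requires, the denominators differ by one factor as well), so there is no term-by-term identification: the two expressions agree only after resummation, and that equality is itself a nontrivial $q$-identity which your sketch does not prove (it can be done by a telescoping induction, or by expanding both sides via \eqref{alter} --- which is in substance what the paper does through symmetric functions). A second point your computation would surface: done correctly, the unfolding yields $\sum_{k=0}^{n-1}\left(-1\right)^k\left(1-q\right)^{2k+1}c^{k+1}\binom{n}{k+1}B_k\left(1,q\right)$, i.e.\ $B_{k-1}\left(1,q\right)$ in the $k$-th summand of \eqref{exact}; the quick check $n=2$, $q=0$ gives $\mathbb{E}_x\left[Q_3-Q_1\right]=2c-c^2$, matching the shifted form but not the literal right-hand side of \eqref{exact}, which evaluates to $2c-2c^2$. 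So aiming to produce $B_k$ exactly as displayed in the statement is not something any correct execution of your plan can achieve.
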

\begin{remark}
    This formula was intended as a solution to the universality problem. However, the asymptotic analysis presents difficulties, since the generating function of the relevant combinatorial object in the exact formula has infinitely many poles near $0$. In the special case $q = 0$, the formula simplifies considerably, allowing us to recover the desired result. It would be interesting to develop a method for proving universality by applying asymptotic analysis directly to this general formula.
\end{remark}
\begin{proof}
    See section \ref{exactformula}.
\end{proof}

\section{Asymptotics size of the ASEP convoy}\label{sec.universal}
In this section, we show that the asymptotic size of the ASEP convoy is universal for fixed $q \in [0,1)$ , using a framework based on orthogonal polynomials. Although our approach shares technical similarities with \cite{bryc2025limits}, the present setting involves a different family of orthogonal polynomials, whose properties require a separate analysis. Understanding why closely related tools arise in these two distinct problems would be an interesting direction for future work. 

\subsection{The continuous big q-Hermite orthogonal polynomials}
There are several equivalent way to define a family of orthogonal polynomials, we define it via the recurrence relations. 
\begin{definition}[\cite{koekoek2010hypergeometric}, p.510 (14.18.4)]
For $|q|<1$ and $a\in[0,1)$, the continuous big $q$-Hermite polynomials $\{H_n\left(x;a\mid q\right)\}_{n=0}^\infty$ are defined by the recurrence relation
\begin{equation}\label{qhermite}
    2x H_n\left(x;a\mid q\right) 
    = H_{n+1}\left(x;a\mid q\right) + a q^n H_n\left(x;a\mid q\right) + \left(1-q^n\right) H_{n-1}\left(x;a\mid q\right),
    \qquad n \geq 1,
\end{equation}
with initial conditions
\[
H_0\left(x;a\mid q\right) = 1, 
\qquad H_{-1}\left(x;a\mid q\right) = 0.
\]
\end{definition}

\begin{remark}
    $H_n\left(x|q\right)$ is in a larger family of orthogonal polynomials which is called Askey-wilson polynomials. Askey-wilson polynomials is a four parameter orthogonal polynomials $H_n\left(x;a,b,c,d|q\right)$, defined firstly in \cite{askey1985some} and studied extensively later (see \cite{koekoek2010hypergeometric}). Take $\left(a,b,c,d\right) = \left(1,0,0,0\right)$ one get $H_n\left(x|q\right)$ from Askey-wilson polynomials. 
    
    We point out that this choice of parameters is not explicitly covered in the standard reference, since they only discuss the case when all parameters have module less than $1$, or one of them has module greater than $1$. Since the reference measure might be ill-defined when $|a| = 1$ in general settings. However, our choice of parameter can be justified by taking $a < 1$ and then let $a \to 1^{-}$. The recurrence definition $H_n\left(x;a|q\right)$ is stable under the limit $a \to 1^{-}$. Moreover, the density function $w\left(x\right)$ is continuous and strictly positive on $\left(-1,1\right)$, vanishes like $\sqrt{1+x}$ at $x = -1$ and has integrable singularity at $x = 1$. By dominated convergence, the orthogonality relation therefore extends to $a = 1$. Later in this paper, we only need the case where $a = 1$, we write $H_n\left(x;a|q\right)$ as $H_n\left(x|q\right)$ in short.
\end{remark}
We summary some properties of the $a = 1$ continuous big q-Hermite orthogonal polynomials, which are used later in this section. Most of the contents are from \cite{koekoek2010hypergeometric} by taking a $a\to 1^-$ limit. The next proposition shows that $H_n\left(x|q\right)$ has an unique reference measure. 
\begin{proposition}[\cite{koekoek2010hypergeometric}, p.510 (14.18.2)]\label{uniqueness}
    Let $|q|<1$ and $x=\cos\theta \in \left(-1,1\right)$.  
    For $\alpha \in \mathbb{C}$, define the weight function
    \begin{equation}
    w\left(x\right) := \frac{\left(q;q\right)_{\infty}}{2\pi \sqrt{1-x^{2}}} \frac{\left|\left(e^{2i\theta};q\right)_{\infty}\right|^{2}}{\left|\left(e^{i\theta};q\right)_{\infty}\right|^{2}},\quad x=\cos\theta.\label{refdef1}
\end{equation}

    $H_n\left(x|q\right)$ is orthogonal with respect to the reference measure $w\left(x\right)$, i.e.
    \begin{equation}
        \int_{-1}^{1} H_n\left(x|q\right) H_m\left(x|q\right) w\left(x\right) dx = \delta_{m,n} \left(q;q\right)_{n},\label{ortho}
    \end{equation}
    where $\delta_{m,n} = 1 \iff m=n$.
    In addition, $w\left(x\right)$ is the unique reference measure such that \eqref{ortho} holds. 
\end{proposition}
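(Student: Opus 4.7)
The statement has two parts, orthogonality and uniqueness, and I would address them separately.

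For the orthogonality relation \eqref{ortho}, I plan to proceed exactly by the $a\to 1^{-}$ continuation suggested in the remark preceding the proposition. For the Askey--Wilson parameter choice $(a,0,0,0)$ with $a\in[0,1)$, all four parameters have modulus strictly less than $1$, and the orthogonality with the weight
\[
w(x;a)=\frac{(q;q)_\infty}{2\pi\sqrt{1-x^2}}\,
\frac{|(e^{2i\theta};q)_\infty|^2}{|(ae^{i\theta};q)_\infty|^2},\qquad x=\cos\theta,
\]
is a standard case in \cite{koekoek2010hypergeometric}. The polynomials $H_n(x;a\mid q)$ depend continuously on $a$ through the recurrence \eqref{qhermite} with $a$-independent initial data, so $H_n(x;a\mid q)\to H_n(x\mid q)$ uniformly on $[-1,1]$ as $a\to 1^{-}$. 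To pass the limit through the integral I would invoke dominated convergence: on any compact subinterval of $(-1,1)$, $w(x;a)$ is jointly continuous in $(a,x)$ and hence uniformly bounded; near $x=1$, the identity $|1-ae^{i\theta}|^2=(1-a)^2+2a(1-\cos\theta)\ge 2a(1-x)$ gives, for $a\in[\tfrac12,1]$, a uniform integrable majorant of the form $C/\sqrt{1-x}$; near $x=-1$, the factor $|(e^{2i\theta};q)_\infty|^2$ contributes the zero $|1-e^{2i\theta}|^2=4(1-x^2)$, which together with $\sqrt{1-x^2}$ in the denominator produces the claimed $\sqrt{1+x}$ vanishing and causes no convergence issue.

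For uniqueness of the reference measure, I would appeal to the theory of bounded Jacobi matrices. The coefficients $aq^n\le 1$ and $1-q^n\le 1$ in \eqref{qhermite} are uniformly bounded, so the associated Jacobi operator $J$ on $\ell^2(\mathbb{Z}_{\ge 0})$ is bounded and self-adjoint with spectrum contained in $[-1,1]$. By the spectral theorem the spectral measure of $J$ at the cyclic vector $e_0$ is uniquely determined, and this spectral measure coincides with the orthogonality measure of the associated polynomial family (equivalently, the Hausdorff moment problem on $[-1,1]$ is automatically determinate). Since $\int_{-1}^1 w(x)\,dx=(q;q)_0=1$, so that $w(x)\,dx$ is a probability measure, any probability measure $\mu$ satisfying $\int H_nH_m\,d\mu=\delta_{m,n}(q;q)_n$ must coincide with $w(x)\,dx$.

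The principal technical difficulty will be the uniform $L^{1}$ control of $w(x;a)$ as $a\uparrow 1$, since precisely in this limit a $(1-x)^{-1/2}$-type singularity appears at $x=1$ that is absent for $a<1$; the elementary lower bound $|1-ae^{i\theta}|^{2}\ge 2a(1-x)$ is the key input to make the dominated convergence argument work. Once this is settled, both the recurrence-side limit and the spectral-theoretic uniqueness argument are routine.
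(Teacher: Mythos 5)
Your proposal is correct and follows essentially the same route as the paper, which imports this proposition from the standard reference and justifies the boundary case $a=1$ in the preceding remark by exactly the $a\to1^{-}$ continuation with dominated convergence that you carry out (your bound $|1-ae^{i\theta}|^{2}\ge 2a(1-x)$, combined with the zero $|1-e^{2i\theta}|^{2}=4(1-x^{2})$ of the numerator, correctly produces the integrable majorant near $x=1$ and the $\sqrt{1+x}$ vanishing near $x=-1$). For uniqueness the paper simply defers to the reference, so your bounded-Jacobi-matrix/determinacy argument is a legitimate and standard way to supply the missing detail.
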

\begin{remark}
    Alternatively, $w\left(x\right)$ can be written in a different form, we have
    \begin{equation}
        w\left(x\right) = \frac{\left(q;q\right)_\infty}{2\pi} \frac{1}{\sqrt{1-x^2}} h\left(x,-1\right)h\left(x,\sqrt{q}\right)h\left(x,-\sqrt{q}\right), \qquad x\in\left(-1,1\right),\label{refdef2}
    \end{equation}
    where
    \[
    h\left(x,\alpha\right) := \prod_{k=0}^\infty \bigl(1-2\alpha xq^k + \alpha^2 q^{2k}\bigr)= \left(\alpha e^{i\theta},\alpha e^{-i\theta};q\right)_\infty.
    \]
\end{remark}

\begin{proposition}[\cite{koekoek2010hypergeometric}, p.512 (14.18.13)]
    The generating function of $H_n\left(x|q\right)$ is the following.
    \begin{equation}
    \sum_{n=0}^{\infty} \frac{H_{n}\left(\cos\theta \mid q\right)}{\left(q;q\right)_{n}} t^{n} = \frac{\left(t; q\right)_{\infty}}{\left(e^{i\theta}t, e^{-i\theta}t; q\right)_{\infty}}.\label{Hgenerating}
    \end{equation}
\end{proposition}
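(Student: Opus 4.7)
The plan is to derive a functional equation for $F(t):=\sum_{n\ge 0}\frac{H_n(\cos\theta\mid q)}{(q;q)_n}t^n$ directly from the three-term recurrence \eqref{qhermite} with $a=1$, and then iterate it. First I would extract the initial value $H_1(x\mid q)=2x-1$ from the recurrence taken at $n=0$ (which is consistent with the convention $H_{-1}=0$, even though the recurrence is stated for $n\ge 1$). Then I multiply \eqref{qhermite} by $t^n/(q;q)_n$ and sum over $n\ge 1$, treating each of the three sums separately.

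The three sums are standard after a shift of index. The first one gives $\sum_{n\ge 1}\frac{H_{n+1}}{(q;q)_n}t^n=\frac{1}{t}\sum_{m\ge 2}\frac{(1-q^m)H_m}{(q;q)_m}t^m=\frac{F(t)-F(qt)}{t}-H_1$, using $(1-q^m)/(q;q)_m=1/(q;q)_{m-1}$. The second is $\sum_{n\ge 1}\frac{q^nH_n}{(q;q)_n}t^n=F(qt)-1$. The third, again by $(1-q^n)/(q;q)_n=1/(q;q)_{n-1}$, collapses to $tF(t)$. Substituting and using $H_1=2x-1$ and $H_0=1$, the linear terms cancel cleanly and one is left, after clearing denominators, with the multiplicative functional equation
\begin{equation*}
F(t)=\frac{1-t}{1-2xt+t^{2}}\,F(qt)=\frac{1-t}{(1-e^{i\theta}t)(1-e^{-i\theta}t)}\,F(qt),
\end{equation*}
where $x=\cos\theta$.

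Finally I iterate this equation $N$ times:
\begin{equation*}
F(t)=\prod_{k=0}^{N-1}\frac{1-q^{k}t}{(1-e^{i\theta}q^{k}t)(1-e^{-i\theta}q^{k}t)}\,F(q^{N}t),
\end{equation*}
and take $N\to\infty$. Since $|q|<1$, $F(q^N t)\to F(0)=H_0/(q;q)_0=1$, and the product converges to $\frac{(t;q)_\infty}{(e^{i\theta}t,e^{-i\theta}t;q)_\infty}$ uniformly on compact subsets of a neighborhood of $0$, yielding \eqref{Hgenerating}. By analytic continuation the identity extends to the full common domain of analyticity.

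The only real subtlety, and the step where I would be most careful, is the bookkeeping around the $n=0$ and $n=1$ boundary terms when shifting indices in the three sums, because the recurrence is declared only for $n\ge 1$ while $F$ starts at $n=0$; getting the cancellations right is what produces the clean factor $(1-t)$ in the numerator of the functional equation. Everything else (convergence of the infinite product, uniqueness of the power series as a formal identity, justification of the $N\to\infty$ limit for $|t|<1$) is routine and follows from $|q|<1$.
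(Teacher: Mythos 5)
Your derivation is correct, and it is genuinely different from what the paper does: the paper offers no proof at all for this identity, simply citing it from the standard reference (Koekoek--Lesky--Swarttouw, (14.18.13)), whereas you give a self-contained derivation from the three-term recurrence \eqref{qhermite}. Your bookkeeping checks out: with $F(t)=\sum_{n\ge0}H_n(\cos\theta\mid q)t^n/(q;q)_n$, the three shifted sums are as you state, and with $H_1=2x-1$ (forced by the $n=0$ instance of the recurrence together with $H_{-1}=0$, and consistent with the paper's later use of $H_1=2x-1$) the linear terms cancel and one lands on
\begin{equation*}
\left(1-2xt+t^{2}\right)F(t)=(1-t)F(qt),\qquad x=\cos\theta,
\end{equation*}
which iterates to the claimed product. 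Two small remarks. First, to justify the analytic step $F(q^{N}t)\to F(0)=1$ you need $F$ to have positive radius of convergence; this follows, e.g., from the crude bound $|H_n|\le 4^n$ that the paper proves later, or you can sidestep analysis entirely by noting that the functional equation together with $F(0)=1$ determines the coefficients of $F$ uniquely as a formal power series, and the right-hand side of \eqref{Hgenerating} satisfies the same $q$-difference equation with the same constant term — you gesture at this, and either route closes the argument. Second, your approach has a mild advantage in this paper's context: the recurrence-based derivation works verbatim for the boundary parameter $a=1$ (giving the numerator $(t;q)_\infty$ directly), so it does not rely on the parameter restrictions of the standard reference, which the paper otherwise has to address by an $a\to1^-$ limiting remark for the orthogonality measure.
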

\begin{remark}
    To match the notation of a theorem that we use later, we define a normalized version of $H_{n}\left(x|q\right)$. We denote it by $\hat{H}_{n}\left(x|q\right)$. Let 
    $\hat{H}_{n}\left(x|q\right) = \frac{H_{n}\left(x|q\right)}{\sqrt{\left(q;q\right)_{\infty}}}$, $\hat{H}_{n}\left(x|q\right)$, which satisfies the recurrence relation:
    \[
    x \hat{H}_{n}\left(x|q\right) = \frac{\sqrt{1 - q^{n+1}}}{2} \hat{H}_{n+1}\left(x|q\right) + \frac{q^n}{2} \hat{H}_{n}\left(x|q\right) + \frac{\sqrt{1 - q^{n}}}{2} \hat{H}_{n-1}\left(x|q\right).
    \]
    With respect to the same reference measure, we have
    \[
    \int_{-1}^{1} \hat{H}_n\left(x|q\right) \hat{H}_m\left(x|q\right) w\left(x\right) dx = \delta_{m,n}.
    \]
\end{remark}

We require pointwise asymptotics for the $H_n\left(x|q\right)$ at the upper endpoint of
the orthogonality interval. Such pointwise limits have been studied, see for example \cite{aptekarev1993asymptotics, 10.1093/imrn/rny042}). For our purposes, it suffices to invoke a result established in \cite{aptekarev1993asymptotics}.

\begin{proposition}[\cite{aptekarev1993asymptotics}, Theorem 1]\label{prop.near}
    Consider sequences $\{q_n\left(x\right)\}_{n = 0}^{\infty}$ of orthogonal polynomials with respect to $d\mu\left(x\right)$, such that
    \begin{equation}
        b_n q_{n+1}\left(x\right) + a_n q_{n}\left(x\right) + b_{n-1} q_{n-1}\left(x\right) = xq_n\left(x\right). \label{near}
    \end{equation}
    Suppose that $\lim_{n \to \infty } b_n = \frac{1}{2}$, $\lim_{n \to \infty } a_n = 0$, in addition,
    \begin{equation}
        \frac{q_{n+1}\left(1\right)}{q_n\left(1\right)}  \simeq 1 + \frac{\alpha+\frac{1}{2}}{n} + o\left(1\right), \quad \alpha > -1.\label{near2}
    \end{equation}
    Let $\hat{q}_n\left(x\right) = \frac{q_n\left(x\right)}{q_n\left(1\right)}$, 
    \begin{equation}
        \lim_{n \to \infty}\hat{q}_n\left(1 - \frac{z^2}{2 n^2}\right)  = 2^\alpha \Gamma\left(\alpha+1\right)z^{-\alpha}J_{\alpha}\left(z\right),
    \end{equation}
    where $J_{\alpha}\left(z\right)$ is the first kind Bessel function. The convergence holds uniformly for $z$ in a compact subset of $\mathbb{R}$.
\end{proposition}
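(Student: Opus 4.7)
The plan is to extract a limiting ODE from the three-term recurrence via a two-variable rescaling, identify the resulting profile with a normalized Bessel function, and then control error accumulation through a discrete energy estimate.

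First, I would rewrite the recurrence in a discrete Sturm--Liouville form. With $r_n := q_{n+1}(1)/q_n(1)$ and $\hat q_n(x) := q_n(x)/q_n(1)$, evaluating \eqref{near} at $x = 1$ gives the identity $b_n r_n + a_n + b_{n-1}/r_{n-1} = 1$. Subtracting this identity (times $\hat q_n(x)$) from \eqref{near} yields
\begin{equation*}
b_n r_n \bigl[\hat q_{n+1}(x) - \hat q_n(x)\bigr] \;-\; \frac{b_{n-1}}{r_{n-1}} \bigl[\hat q_n(x) - \hat q_{n-1}(x)\bigr] \;=\; (x-1)\, \hat q_n(x),
\end{equation*}
where the nontrivial large-$n$ information is concentrated in the small asymmetry $b_n r_n - b_{n-1}/r_{n-1} \sim (\alpha+\tfrac12)/n$, supplied by \eqref{near2}.

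Next, I would fix $z$ and $x = 1 - z^2/(2n^2)$, and view $v_k^{(n)} := \hat q_k\bigl(1 - z^2/(2n^2)\bigr)$ for $0 \le k \le n$ as a discretization on the grid $t_k := k/n \in [0,1]$ of an unknown profile $V(t,z)$. Using the identifications $v_{k+1}^{(n)} - v_k^{(n)} \approx n^{-1} \partial_t V$ and $v_{k+1}^{(n)} - 2 v_k^{(n)} + v_{k-1}^{(n)} \approx n^{-2} \partial_t^2 V$, formal Taylor expansion of the Sturm--Liouville recurrence above shows that $V$ satisfies
\begin{equation*}
\partial_t^2 V + \frac{2\alpha+1}{t}\, \partial_t V + z^2\, V = 0, \qquad V(0,z) = 1,
\end{equation*}
whose unique analytic bounded solution is $V(t,z) = 2^\alpha \Gamma(\alpha+1)\,(zt)^{-\alpha} J_\alpha(zt)$; indeed the substitution $w = zt$ reduces the equation to the standard Bessel form $V_{ww} + (2\alpha+1) w^{-1} V_w + V = 0$. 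Evaluating at $t = 1$ recovers the claimed limit.

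To upgrade the formal computation into a proof, I would establish uniform bounds and equicontinuity of $\{v_k^{(n)}\}$, jointly in $z$ ranging in a compact set and in $k \le n$. The natural device is a weighted discrete energy of the form $E_k^{(n)} := \tfrac12 (v_{k+1}^{(n)} - v_k^{(n)})^2 + \tfrac{z^2}{2 n^2} (v_k^{(n)})^2$ multiplied by a discrete analogue of the integrating factor $t^{2\alpha+1}$; a Gronwall-type argument on the normalized recurrence then controls $E_k^{(n)}$ uniformly. Arzel\`a--Ascoli extracts a subsequential limit $V$, which must solve the limiting ODE above; uniqueness with $V(0,z) = 1$ identifies the limit and upgrades convergence along subsequences to the uniform-on-compacts statement. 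The main obstacle is that the nominal $o(1/n)$ corrections in $b_n$, $a_n$, and $r_n$ enter the recurrence over $\Theta(n)$ steps and could, a priori, aggregate to an $O(1)$ discrepancy; taming this near the singular coefficient $1/t$ at the turning point $t = 0$ is the technical heart of Aptekarev's argument in the cited paper.
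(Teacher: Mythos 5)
You should first note a mismatch of scope: the paper does not prove Proposition \ref{prop.near} at all — it is imported as a black-box citation of Theorem 1 of Aptekarev's paper \cite{aptekarev1993asymptotics}, and the text around it makes clear the authors only \emph{invoke} the result. So there is no internal proof to compare your argument against; what you have written is an attempt to reprove the cited theorem. On the positive side, your formal derivation is the standard Mehler--Heine-type heuristic and it is internally consistent: the rewriting $b_n r_n\left[\hat q_{n+1}-\hat q_n\right]-\frac{b_{n-1}}{r_{n-1}}\left[\hat q_n-\hat q_{n-1}\right]=\left(x-1\right)\hat q_n$ is correct, the scaling $x=1-z^2/\left(2n^2\right)$, $t=k/n$ does lead formally to $V''+\frac{2\alpha+1}{t}V'+z^2V=0$, and the bounded solution with $V\left(0\right)=1$ is exactly $2^\alpha\Gamma\left(\alpha+1\right)\left(zt\right)^{-\alpha}J_\alpha\left(zt\right)$, which reproduces the claimed constant at $t=1$.

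As a proof, however, the sketch has genuine gaps beyond the one you flag. First, the hypotheses give only $\lim b_n=\tfrac12$, $\lim a_n=0$ with no rate, and \eqref{near2} controls the ratio $r_n$ only up to $o\left(1\right)$ (which must in fact be read as $o\left(1/n\right)$ for the statement to carry content); from this alone you cannot conclude $b_kr_k-b_{k-1}/r_{k-1}=\left(\alpha+\tfrac12\right)/k+\text{(summable error)}$ \emph{for all} $k\le n$, uniformly, which is what your discrete ODE needs at every grid point — the asymmetry could, consistently with the stated hypotheses, oscillate at scale $1/\sqrt k$ and destroy the limit, so either stronger (e.g.\ bounded-variation or rate) assumptions must be made explicit or the argument must be restructured. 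Second, the identification step is not automatic: the limiting equation has a second solution singular at $t=0$ (behaving like $t^{-2\alpha}$, or $\log t$ when $\alpha=0$), and ruling out contamination by it requires quantitative a priori control precisely in the regime $k=o\left(n\right)$, where the Taylor/continuum replacement $v_{k\pm1}\approx V\pm n^{-1}V'+\tfrac12 n^{-2}V''$ is weakest and where your proposed energy, weighted by $t^{2\alpha+1}$, degenerates when $\alpha\in\left(-1,-\tfrac12\right)$; Arzel\`a--Ascoli plus "uniqueness with $V\left(0,z\right)=1$" does not by itself exclude the singular branch. These are exactly the points where the cited proof does real work, so your proposal should be regarded as a correct heuristic roadmap rather than a replacement for the citation — which, for the purposes of this paper, is all that is required.
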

As a corollary, we have the pointwise asymptotics for the $H_n\left(x|q\right)$ near $1$.
\begin{corollary}
    Fix $q \in [0,1)$, let $H_n\left(x|q\right)$ be the $a = 1$ continuous big q-Hermite polynomial, then for $u\in \left(0,1\right)$,
    \begin{equation}
        \lim_{n \to \infty} H_n\left(1 - \frac{u^2}{2 n^2}\right) = \cos{u}. \label{polynear}
    \end{equation}
\end{corollary}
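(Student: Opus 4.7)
The plan is to deduce the corollary directly from Proposition \ref{prop.near} by applying it to the orthonormalized continuous big $q$-Hermite polynomials $\hat{H}_n(x|q) = H_n(x|q)/\sqrt{(q;q)_n}$ introduced in the remark just after Proposition \ref{uniqueness}. Their three-term recurrence already has exactly the symmetric form $b_n \hat{H}_{n+1} + a_n \hat{H}_n + b_{n-1}\hat{H}_{n-1} = x \hat{H}_n$ with $b_n = \tfrac{1}{2}\sqrt{1-q^{n+1}}$ and $a_n = \tfrac{1}{2}q^n$. Since $q\in[0,1)$, both sequences converge exponentially to the required limits $\tfrac{1}{2}$ and $0$, so the first two hypotheses of Proposition \ref{prop.near} are immediate.

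The core computation is to identify the parameter $\alpha$ from the ratio of values at $x = 1$. Evaluating the generating function \eqref{Hgenerating} at $\theta = 0$ gives $\sum_n H_n(1|q) t^n/(q;q)_n = (t;q)_\infty/(t;q)_\infty^2 = 1/(t;q)_\infty$, and the $q$-binomial theorem expands the right-hand side as $\sum_n t^n/(q;q)_n$. Matching coefficients yields the clean identity $H_n(1|q) = 1$ for every $n \ge 0$. Consequently
\[
\frac{\hat{H}_{n+1}(1|q)}{\hat{H}_n(1|q)} = \frac{1}{\sqrt{1-q^{n+1}}} = 1 + O(q^n),
\]
which is of the form $1 + (\alpha + \tfrac{1}{2})/n + o(1/n)$ precisely when $\alpha = -\tfrac{1}{2}$, a value satisfying the required lower bound $\alpha > -1$.

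Applying Proposition \ref{prop.near} with this $\alpha$ and observing that $\hat{q}_n(x) = \hat{H}_n(x|q)/\hat{H}_n(1|q) = H_n(x|q)/H_n(1|q) = H_n(x|q)$ thanks to $H_n(1|q) = 1$ yields
\[
\lim_{n \to \infty} H_n\!\left(1 - \frac{u^2}{2n^2}\,\Big|\,q\right) = 2^{-1/2}\,\Gamma(\tfrac{1}{2})\, u^{1/2}\, J_{-1/2}(u).
\]
Plugging in $\Gamma(\tfrac12)=\sqrt{\pi}$ together with the closed form $J_{-1/2}(u) = \sqrt{2/(\pi u)}\,\cos u$ collapses the right-hand side to $\cos u$, as claimed.

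The only delicate point in this plan is the reading of the hypothesis \eqref{near2}: our ratio is in fact $1 + O(q^n)$, which is exponentially small rather than of order $1/n$, so one must recognize that this forces the coefficient $\alpha + \tfrac{1}{2}$ to vanish, producing the unique admissible value $\alpha = -\tfrac{1}{2}$. Once this identification is made, the remainder is a mechanical simplification using the elementary formula for the half-integer Bessel function; no further asymptotic analysis is required.
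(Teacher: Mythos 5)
Your proposal is correct and follows essentially the same route as the paper: apply Proposition \ref{prop.near} to the orthonormalized polynomials $\hat{H}_n$, use $H_n(1\mid q)=1$ to read off $\alpha=-\tfrac12$ from the ratio $\hat H_{n+1}(1)/\hat H_n(1)=1/\sqrt{1-q^{n+1}}$, and simplify with $J_{-1/2}(u)=\sqrt{2/(\pi u)}\cos u$. The only cosmetic difference is that you verify $H_n(1\mid q)=1$ via the generating function rather than by induction on the recurrence.
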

\begin{proof}
    The recurrence relation of $\hat{H}_n\left(x|q\right)$ satisfies condition \eqref{near} in Proposition \ref{prop.near}, with $b_n = \frac{\sqrt{1 - q^{n+1}}}{2} $ and $a_n = \frac{q^n}{2}$. Clearly, $\lim_{n \to \infty } b_n = \frac{1}{2}$, $\lim_{n \to \infty } a_n = 0$ is satisfied. By definition $H_{n}\left(1\right)  = 1$, hence $\hat{H}_n\left(1\right) = \frac{H_{n}\left(1\right)}{\sqrt{\left(q;q\right)_n}} = \frac{1}{\sqrt{\left(q;q\right)_n}}$, condition \eqref{near2} holds with $\alpha = -\frac{1}{2}$. By applying \eqref{prop.near} to $\hat{H}_n$, and recalling that $H_n\left(x\right)=\frac{\hat{H}_n\left(x\right)}{\hat{H}_n\left(1\right)}$, together with the identity for the Bessel function of the first kind $J_{-\frac{1}{2}}\left(u\right) = \sqrt{\frac{2}{\pi u}} \cos{u}$, we obtain
    \[
    \lim_{n \to \infty} H_n\left(1 - \frac{u^2}{2 n^2}\right) = 2^{-\frac{1}{2}} \Gamma\left(\frac{1}{2}\right) \sqrt{\frac{2}{\pi u}} \cos{u} = \cos{u}.
    \]
\end{proof}

Recall that the density function $w\left(x\right)$ has integrable singularity at $x = 1$. We present here a more precise description for later usage. For this purpose, we need the following property of q-Pochhammer symbol.
\begin{lemma}
    For $q \in [0,1)$,
    \begin{equation}
        \left(-q;q\right)_{\infty}\left(q;q^2\right)_{\infty} = 1.\label{qiden}
    \end{equation}
\end{lemma}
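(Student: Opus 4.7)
The plan is to reduce the lemma to two classical splittings of $q$-Pochhammer products. First I would expand both factors by definition. On the one hand,
\[
(-q;q)_\infty = \prod_{k=1}^\infty (1+q^k),
\qquad
(q;q^2)_\infty = \prod_{k=0}^\infty (1-q^{2k+1}),
\]
so the product $(-q;q)_\infty (q;q^2)_\infty$ is an infinite product over all positive integers of the factors $(1+q^k)$ together with $(1-q^k)$ for odd $k$ only.

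Next I would use two elementary identities. The first is $(1-q^k)(1+q^k) = 1 - q^{2k}$, which telescopes the full product $(q;q)_\infty(-q;q)_\infty$ into $(q^2;q^2)_\infty$. Explicitly,
\[
(-q;q)_\infty (q;q)_\infty \;=\; \prod_{k=1}^{\infty}(1-q^k)(1+q^k) \;=\; \prod_{k=1}^\infty(1-q^{2k}) \;=\; (q^2;q^2)_\infty,
\]
so $(-q;q)_\infty = (q^2;q^2)_\infty/(q;q)_\infty$. The second identity splits $(q;q)_\infty$ according to the parity of the index:
\[
(q;q)_\infty \;=\; \prod_{k \text{ even}, k\ge 2}(1-q^k) \cdot \prod_{k \text{ odd}, k\ge 1}(1-q^k) \;=\; (q^2;q^2)_\infty \, (q;q^2)_\infty.
\]
Both manipulations are legitimate since $|q|<1$ guarantees absolute convergence of the infinite products, so rearranging factors is harmless.

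Combining the two gives
\[
(-q;q)_\infty (q;q^2)_\infty \;=\; \frac{(q^2;q^2)_\infty}{(q;q)_\infty} \cdot \frac{(q;q)_\infty}{(q^2;q^2)_\infty} \;=\; 1,
\]
as required. There is no real obstacle here; the only thing to be careful about is to invoke absolute convergence (from $q \in [0,1)$) before rearranging the infinite products. The proof is essentially two lines once the two standard $q$-Pochhammer identities are in place.
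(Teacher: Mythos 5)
Your proof is correct and follows essentially the same route as the paper: both arguments hinge on pairing $(1-q^k)(1+q^k)=1-q^{2k}$ and the even/odd split of $(q;q)_\infty$, the only cosmetic difference being that you express $(-q;q)_\infty$ and $(q;q^2)_\infty$ as ratios and multiply, while the paper multiplies the identity through by $(q;q)_\infty$ and cancels it at the end. Nothing further is needed.
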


\begin{proof}
    \[
    \begin{aligned}
         \left(-q;q\right)_{\infty}\left(q;q^2\right)_{\infty}\left(q;q\right)_{\infty}&= \prod_{k=1}^{\infty}\left(1 + q^k\right) \prod_{k=1}^{\infty}\left(1 - q^k\right)\prod_{k=0}^{\infty}\left(1 - q^{2k+1}\right) \\
        &=  \prod_{k=1}^{\infty}\left(1 - q^{2k}\right)\prod_{k=0}^{\infty}\left(1 - q^{2k+1}\right)= \prod_{k=1}^{\infty}\left(1 - q^k\right)= \left(q;q\right)_{\infty}.
    \end{aligned}
    \]
    Clearly, $\left(q;q\right)_{\infty} \neq 0$, after canceling out $\left(q;q\right)_{\infty}$ from both side, we derive $\left(-q;q\right)_{\infty}\left(q;q^2\right)_{\infty} = 1$.
\end{proof}

\begin{lemma}
    Let $w\left(x\right)$ be the density function of the reference measure of $H_n\left(x|q\right)$, supported on $\left[-1,1\right]$. For fixed $q \in [0,1)$, we have 
    \begin{equation}
        \lim_{\epsilon \to 0} \sqrt{\epsilon}w\left( 1 - \epsilon\right) = \frac{\sqrt{2} \left(q;q\right)_{\infty}}{\pi}. \label{dennear}
    \end{equation}
    In addition, $\sqrt{1-x}w\left( x\right)$ has a uniform upper bound for $x \in [0,1]$.
\end{lemma}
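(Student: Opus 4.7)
The plan is to exploit the factored form of the density given in \eqref{refdef2}, in which the singular behavior at the endpoint $x=1$ is isolated in the factor $1/\sqrt{1-x^{2}}$, while the remaining piece $h(x,-1)h(x,\sqrt{q})h(x,-\sqrt{q})$ is an analytic function of $x$ in a neighborhood of $1$. Writing $1-x^{2}=(1-x)(1+x)$, one sees $\sqrt{\epsilon}/\sqrt{1-(1-\epsilon)^{2}} = 1/\sqrt{2-\epsilon}\to 1/\sqrt{2}$ as $\epsilon\to 0^{+}$, so the claimed limit reduces to computing the value $h(1,-1)h(1,\sqrt{q})h(1,-\sqrt{q})$ explicitly and multiplying by $(q;q)_{\infty}/(2\pi\sqrt{2})$.

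To evaluate this product I would use that at $\theta=0$ the definition collapses to $h(1,\alpha)=\bigl((\alpha;q)_{\infty}\bigr)^{2}$, so the product becomes $\bigl((-1;q)_{\infty}(\sqrt{q};q)_{\infty}(-\sqrt{q};q)_{\infty}\bigr)^{2}$. Pairing the two half-integer-shift factors via $(1-\sqrt{q}q^{k})(1+\sqrt{q}q^{k})=1-q^{2k+1}$ gives $(\sqrt{q};q)_{\infty}(-\sqrt{q};q)_{\infty}=(q;q^{2})_{\infty}$. Splitting off the $k=0$ term yields $(-1;q)_{\infty}=2(-q;q)_{\infty}$, and the identity \eqref{qiden} in the form $(-q;q)_{\infty}(q;q^{2})_{\infty}=1$ collapses the whole expression to $2$. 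Squaring, $h(1,-1)h(1,\sqrt{q})h(1,-\sqrt{q})=4$, and combining with the prefactor produces exactly $\sqrt{2}(q;q)_{\infty}/\pi$.

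For the uniform bound I would write
\[
\sqrt{1-x}\,w(x)=\frac{(q;q)_{\infty}}{2\pi\sqrt{1+x}}\,h(x,-1)\,h(x,\sqrt{q})\,h(x,-\sqrt{q}),
\]
which removes the endpoint singularity entirely. On $[0,1]$ the factor $1/\sqrt{1+x}$ is bounded by $1$, and every individual factor $1-2\alpha xq^{k}+\alpha^{2}q^{2k}$ appearing in the three $h$-products is continuous and strictly positive for $q\in[0,1)$ and $\alpha\in\{-1,\pm\sqrt{q}\}$; this can be checked by direct inspection (for $\alpha=-1$ the factor is a sum of nonnegative terms, while for $\alpha=\pm\sqrt{q}$ positivity on $[0,1]$ follows from $\tfrac{1+q^{2k+1}}{2\sqrt{q}\,q^{k}}\ge 1$). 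Since the infinite products converge uniformly on $[0,1]$, each $h(x,\alpha)$ is continuous on the compact interval $[0,1]$, and the full expression is therefore bounded.

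The main obstacle is the algebraic identification of the constant at $x=1$; once \eqref{qiden} is available, everything reduces to a clean limit and a routine continuity argument. The only subtlety I anticipate is correctly tracking the factor of two coming from the $k=0$ term in $(-1;q)_{\infty}$, which is the easiest point at which to lose the normalization.
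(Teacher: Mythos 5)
Your proposal is correct and follows essentially the same route as the paper: both start from the factored form \eqref{refdef2}, isolate the $1/\sqrt{1-x^2}$ singularity, and reduce the endpoint constant to the identity $(-q;q)_\infty(q;q^2)_\infty=1$ (the paper factors out the $(2+2x)$ term and studies the remaining product $r(x)$, while you evaluate $h(1,\alpha)=((\alpha;q)_\infty)^2$ directly — the same computation in slightly different packaging). The only minor difference is in the uniform bound, where the paper exhibits the explicit constant $\tfrac{\sqrt{2}}{\pi}(q;q)_\infty(-q;q)_\infty^4$ and you argue by continuity on the compact interval; both are valid.
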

\begin{proof}
    By the definition of $w\left(x\right)$ \eqref{refdef2}, we have
    \[
    \begin{aligned}
        w\left(x\right) &= \frac{\left(q;q\right)_{\infty}}{2 \pi} \frac{1}{\sqrt{1 - x^2}} h\left(x,-1\right) h\left(x,\sqrt{q}\right) h\left(x,-\sqrt{q}\right)\\
        &=  \frac{\left(q;q\right)_{\infty}}{2 \pi} \frac{1}{\sqrt{1 - x^2}}  \prod_{k=0}^{\infty}\left(1 + 2 x q^k + q^{2k}\right)\prod_{k=0}^{\infty}\left(1 - 2 x q^{k+\frac{1}{2}} + q^{2k+1}\right) \prod_{k=0}^{\infty}\left(1 + 2 x q^{k+\frac{1}{2}} + q^{2k+1}\right)\\
        &= \frac{\left(q;q\right)_{\infty}}{2 \pi} \frac{2+2x}{\sqrt{1 - x^2}}  \prod_{k=1}^{\infty}\left(1 + 2 x q^k + q^{2k}\right)\prod_{k=0}^{\infty}\left(1  + 2q^{2k+1} + q^{4k+2} - 4 x^2 q^{2k+1}\right).
    \end{aligned}
    \]
    Let $r\left(x\right) := \prod_{k=1}^{\infty}\left(1 + 2 x q^k + q^{2k}\right)\prod_{k=0}^{\infty}\left(1  + 2q^{2k+1} + q^{4k+2} - 4 x^2 q^{2k+1}\right)$.
    \[
    \begin{aligned}
        \lim_{x \to 1} r\left(x\right) &= \lim_{x \to 1}\prod_{k=1}^{\infty}\left(1 + 2 x q^k + q^{2k}\right)\prod_{k=0}^{\infty}\left(1  + 2q^{2k+1} + q^{4k+2} - 4 x^2 q^{2k+1}\right)\\
        &= \prod_{k=1}^{\infty}\left(1 + 2 q^k + q^{2k}\right)\prod_{k=0}^{\infty}\left(1  + 2q^{2k+1} + q^{4k+2} - 4 q^{2k+1}\right) = \left(-q;q\right)_{\infty}^{2}\left(q;q^2\right)_{\infty}^{2}.\\
    \end{aligned}
    \]
    We obtain $\lim_{x \to 1} r\left(x\right) = 1$ by using \eqref{qiden}. Finally, 
    \[
    \lim_{x \to 1} \sqrt{1-x}  w\left(x\right) = \lim_{x \to 1}r\left(x\right) \lim_{x \to 1} \frac{\left(q;q\right)_{\infty}}{ \pi} \sqrt{1+x} = \frac{\sqrt{2} \left(q;q\right)_{\infty}}{\pi}.
    \]
    For the uniform bound of $\sqrt{1-x}w\left( x\right)$, we have
    \begin{equation}
        \sqrt{1-x}w\left( x\right) =  \frac{\left(q;q\right)_{\infty}}{ \pi} r\left(x\right) \sqrt{1+x}  \le \frac{\sqrt{2}}{\pi}\left(q;q\right)_{\infty}\left(-q;q\right)_{\infty}^{4} := C\left(q\right). \label{Uniboundw}
    \end{equation}
\end{proof}

The key difference between the polynomials \(H_n\) considered in this work and those studied in \cite{bryc2025limits} lies in their uniform boundedness. In \cite{bryc2025limits}, the orthogonal polynomials admit a simple uniform bound by their value at $1$ (see \cite[Corollary~3.1]{bryc2025limits}). By contrast, our polynomials \(H_n\) do not enjoy such a convenient property: at \(x=-1\), they exhibit large oscillations. Nevertheless, we establish a uniform bound for \(H_n\) on every compact subset of \((-1,1]\). This weaker but sufficient control still allows us to carry out the asymptotic analysis. The method we use to prove such a bound is called singularity analysis of generating functions, which is a powerful tool in analytical combinatorics \cite{flajolet2009analytic}. 

\begin{proposition}
    Fix $0<q<1$ and $\theta_{0}\in\left(0,\pi\right)$. Then there exists a constant $C=C\left(q,\theta_{0}\right)>0$ such that for all $n\geq 0$ and all $\theta\in[0,\theta_{0}]$ (equivalently, $x=\cos\theta\in[\cos\theta_{0},1]$) one has
    \begin{equation}
        |H_{n}\left(\cos\theta\mid q\right)| \leq C.\label{Hunibound}
    \end{equation}
\end{proposition}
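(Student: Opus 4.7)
The plan is to apply singularity analysis to the generating function \eqref{Hgenerating}. Set
$$F(t,\theta) := \sum_{n=0}^\infty \frac{H_n(\cos\theta\mid q)}{(q;q)_n}\,t^n = \frac{(t;q)_\infty}{(te^{i\theta},te^{-i\theta};q)_\infty}.$$
For $\theta\in(0,\pi)$, the only singularities of $F(\,\cdot\,,\theta)$ of modulus $<q^{-1}$ are two simple poles at $t=e^{\pm i\theta}$, both on the unit circle; the next nearest poles lie on the circle $|t|=q^{-1}$.

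First, I would fix an auxiliary radius $R\in(1,q^{-1})$ and, for $\theta\in(0,\theta_0]$, push the Cauchy contour $|t|=r<1$ outward to $|t|=R$, picking up residues at $e^{\pm i\theta}$:
$$\frac{H_n(\cos\theta\mid q)}{(q;q)_n} = -\mathrm{Res}_{t=e^{i\theta}}\frac{F(t,\theta)}{t^{n+1}} - \mathrm{Res}_{t=e^{-i\theta}}\frac{F(t,\theta)}{t^{n+1}} + \frac{1}{2\pi i}\oint_{|t|=R}\frac{F(t,\theta)}{t^{n+1}}\,dt.$$
Using $(te^{-i\theta};q)_\infty=(1-te^{-i\theta})(qte^{-i\theta};q)_\infty$ together with the identity $1-te^{-i\theta}=-e^{-i\theta}(t-e^{i\theta})$, a direct computation gives
$$\mathrm{Res}_{t=e^{i\theta}}\frac{F(t,\theta)}{t^{n+1}} = -\frac{e^{-in\theta}(e^{i\theta};q)_\infty}{(e^{2i\theta};q)_\infty (q;q)_\infty},$$
together with its complex conjugate at $t=e^{-i\theta}$.

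Second, I would establish uniform control on $[0,\theta_0]$. Factoring both infinite products once,
$$\frac{(e^{i\theta};q)_\infty}{(e^{2i\theta};q)_\infty} = \frac{1}{1+e^{i\theta}}\cdot\frac{(qe^{i\theta};q)_\infty}{(qe^{2i\theta};q)_\infty},$$
and the right-hand side extends continuously to $\theta=0$ with value $\tfrac12$ and is jointly continuous on $[0,\theta_0]$ (as $1+e^{i\theta}$ and $(qe^{2i\theta};q)_\infty$ are nonzero there). Since $|e^{\mp in\theta}|=1$, each residue is bounded by a constant depending only on $q$ and $\theta_0$, independently of $n$. The outer integral is straightforward: $F(t,\theta)$ has no denominator zero on the compact set $\{|t|=R\}\times[0,\theta_0]$ (because $R\in(1,q^{-1})$), so $|F(t,\theta)|\le M(q,\theta_0,R)$ there, giving
$$\left|\frac{1}{2\pi i}\oint_{|t|=R}\frac{F(t,\theta)}{t^{n+1}}\,dt\right|\le\frac{M}{R^n}\le M.$$
Combined with $(q;q)_n\le 1$, these bounds yield $|H_n(\cos\theta\mid q)|\le C(q,\theta_0)$ for $\theta\in(0,\theta_0]$; at $\theta=0$ the cancellation $F(t,0)=1/(t;q)_\infty$ gives $H_n(1\mid q)\equiv 1$ directly.

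The main obstacle is the apparent merging of the two simple poles $e^{\pm i\theta}$ into a double pole as $\theta\to 0^+$, which would naively blow up each residue individually. The resolution lies in the zero of the numerator $(t;q)_\infty$ at $t=1$, which exactly cancels one order of the apparent double pole; the factorization displayed above makes this cancellation explicit and shows that each residue, while singular-looking, extends continuously through $\theta=0$. Once this continuity is verified, uniform boundedness follows from ordinary compactness, and the outer contour contribution is essentially free.
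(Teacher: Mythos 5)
Your proposal is correct and follows essentially the same route as the paper: singularity analysis of the generating function \eqref{Hgenerating}, extraction of the two simple poles at $t=e^{\pm i\theta}$ (your residue agrees with the paper's amplitude $A_{-}$), a uniform bound on their contribution for $\theta\in[0,\theta_0]$, and a bounded remainder from the regular part. The only differences are cosmetic: you push the contour to $|t|=R\in(1,q^{-1})$ and bound the residues by continuity and compactness (handling the $\theta\to0$ pole merging via the explicit factorization $\tfrac{1}{1+e^{i\theta}}$), whereas the paper subtracts principal parts, integrates on $|t|=1$, and bounds the amplitudes with explicit $q$-Pochhammer inequalities.
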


\begin{proof}
The generating function of $H_n$ is \eqref{Hgenerating}
\[
\sum_{n=0}^{\infty}\frac{H_{n}\left(\cos\theta\mid q\right)}{\left(q;q\right)_{n}}t^{n}
=\frac{\left(t;q\right)_{\infty}}{\left(e^{i\theta}t;q\right)_{\infty}\left(e^{-i\theta}t;q\right)_{\infty}}
=:F_{\theta}\left(t\right),\qquad |t|<1.
\]
On the unit circle $|t|=1$, $F_{\theta}$ has two simple poles at $t=e^{\pm i\theta}$. Subtracting their principal parts gives the decomposition
\[
F_{\theta}\left(t\right)
= \frac{A_{+}}{1-e^{i\theta}t}
+ \frac{A_{-}}{1-e^{-i\theta}t}
+ R_{\theta}\left(t\right),
\]
where $R_{\theta}$ is analytic on $|t|\leq 1$. A residue computation yields
\[
A_{+}=\frac{\left(e^{-i\theta};q\right)_{\infty}}{\left(q;q\right)_{\infty}\left(e^{-i2\theta};q\right)_{\infty}},
\qquad
A_{-}=\frac{\left(e^{i\theta};q\right)_{\infty}}{\left(q;q\right)_{\infty}\left(e^{i2\theta};q\right)_{\infty}}.
\]
Expanding the geometric series
\(
\left(1-e^{\pm i\theta}t\right)^{-1}=\sum_{n\ge0}e^{\pm in\theta}t^{n}
\)
and comparing coefficients shows that
\[
\frac{H_{n}\left(\cos\theta\mid q\right)}{\left(q;q\right)_{n}}
=A_{+}e^{in\theta}+A_{-}e^{-in\theta}+r_{n}\left(\theta\right),
\]
where $r_{n}\left(\theta\right)=\left[t^{n}\right]R_{\theta}\left(t\right)$. Since $R_{\theta}$ is analytic on $|t|\leq 1$, Cauchy’s formula implies
\[
|r_{n}\left(\theta\right)|\le \sup_{|t|=1}|R_{\theta}\left(t\right)|.
\]
This quantity depends continuously on $\theta$ and is therefore bounded on the compact interval $\left[0,\theta_{0}\right]$ by $M\left(q,\theta_{0}\right)$.

Using the elementary bounds valid for $|z|=1$,
\[
\left(q;q\right)_{\infty}|1-z|\ \le\ |\left(z;q\right)_{\infty}|\ \le\ |1-z|\left(-q;q\right)_{\infty},
\]
and applying them to $z=e^{-i\theta}$ and $z=e^{-i2\theta}$, we obtain
\[
|\left(e^{-i\theta};q\right)_{\infty}|\le |1-e^{-i\theta}|\left(-q;q\right)_{\infty},\qquad
|\left(e^{-i2\theta};q\right)_{\infty}|\ge |1-e^{-i2\theta}|\left(q;q\right)_{\infty}.
\]
Hence
\[
|A_{+}|\le
\frac{|1-e^{-i\theta}|}{|1-e^{-i2\theta}|}
\frac{\left(-q;q\right)_{\infty}}{\left(q;q\right)_{\infty}^{2}}
=\frac{1}{2\cos\left(\theta/2\right)}\frac{\left(-q;q\right)_{\infty}}{\left(q;q\right)_{\infty}^{2}}.
\]
The same estimate holds for $A_{-}$. Therefore, for $\theta\in[0,\theta_{0}]$,
\begin{equation}\label{eq:AmpBound}
|A_{\pm}|\ \le\ \frac{1}{2\cos\left(\theta_{0}/2\right)}\frac{\left(-q;q\right)_{\infty}}{\left(q;q\right)_{\infty}^{2}}
=:C_{A}\left(q,\theta_{0}\right).
\end{equation}

Since $\left(q;q\right)_{n} < 1$, we obtain
\[
|H_{n}\left(\cos\theta\mid q\right)| \le 2C_{A}\left(q,\theta_{0}\right)+M\left(q,\theta_{0}\right)=:C\left(q,\theta_{0}\right),
\]
uniformly in $n\ge0$ and $\theta\in[0,\theta_{0}]$.
\end{proof}
\begin{remark}
    From the proof it also becomes clear why a uniform bound for \(H_n\left(x\right)\) for all \(x \in [-1,1]\) and all \(n\) cannot exist. At the endpoint \(x = -1\) (equivalently, \(\theta = \pi\)), the generating function \(F_{\theta}\) has a double pole at \(t = -1\), which is its dominant singularity. As a consequence, \(H_n\left(-1\right)\) grows linearly with \(n\).
\end{remark}

\subsection{Birth and death chains and orthogonal polynomials}
The motivation for introducing orthogonal polynomials lies in their connection with random walks, particularly in the context of birth–death processes. Recall that the stochastic process $Q_n$ which characterises the properties of the convoy is a birth–death chain. The following result is known in the literature as the Karlin–McGregor theory \cite{karlin1959random}.
\begin{definition}
    Let $X_n$ be a discrete Markov chain on $\mathbb{N}$, with transition probabilities
    \[
    \mathbb{P}_{ij} := \mathbb{P}\left(X_{n+1} = j | X_n = i\right).
    \]
    We say that $X_n$ is a birth-death chain if its transition probabilities $\mathbb{P}_{ij}$ is tridiagonal; that is, 
    \[
    P_{ij} = 0, \text{if} \quad |i-j|>1.
    \]
    In this case, the nonzero entries are given by $P_{i,i} = r_i, P_{i,i+1} = p_i, P_{i,i-1} = q_i$. We also define the m step transition probabilities,
    \[
    \mathbb{P}^{m}_{ij} := \mathbb{P}\left(X_{n+m} = j | X_n = i\right).
    \]
\end{definition}
\begin{proposition}[\cite{karlin1959random}, Theorem 1]
    Let $\left(X_n\right)_{n \geq 0}$ be a birth--death chain with transition probabilities $\left(p_i, q_i, r_i\right)$, and define $\theta_n = \frac{\prod_{i=0}^{n-1}p_i}{\prod_{i=1}^{n}q_i}$. Consider the sequence of polynomials $\{R_n\}_{n \geq 0}$ determined by the three-term recurrence
    \begin{equation}
        \left\{
        \begin{aligned}
            &xR_n\left(x\right)  = p_n R_{n+1}\left(x\right) + r_n R_{n}\left(x\right) + q_n R_{n-1}\left(x\right), \\
            &R_0\left(x\right) =1, R_{-1}\left(x\right) = 0.
        \end{aligned}
        \right.\label{randomwalk}
    \end{equation}
    Then there is an unique positive density function $\psi\left(x\right)$ such that  $\text{supp}\left(\psi\right) \subset [-1,1]$ and the $n$-step transition probabilities of $X_n$ satisfy
    \begin{equation}
        \mathbb{P}^{n}_{ij} = \theta_j \int_{-1}^{1} x^n R_i\left(x\right) R_{j}\left(x\right)\psi\left(x\right) dx.\label{transp}
    \end{equation}   
\end{proposition}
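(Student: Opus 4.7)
The plan is to exploit the reversibility of the birth--death chain and apply Favard's theorem on orthogonal polynomials. First, I would check that the $\theta_n$ given in the statement are reversibility weights: from $\theta_n = \left(\prod_{i<n} p_i\right)/\left(\prod_{i \leq n} q_i\right)$ one verifies directly that $\theta_n p_n = \theta_{n+1} q_{n+1}$, which is the detailed-balance identity. Consequently the transition matrix $P$ is self-adjoint on the weighted space $\ell^2\left(\mathbb{N}, \theta\right)$, and in this weighted basis it becomes a symmetric Jacobi matrix with entries determined by $r_n$ and $\sqrt{p_n q_{n+1}}$. This symmetrization is the structural input that makes the spectral theory applicable.

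Next, I would apply Favard's theorem to the three-term recurrence \eqref{randomwalk}. Since $p_n, q_{n+1} > 0$, Favard produces a positive Borel measure $\psi\left(x\right) dx$ on $\mathbb{R}$ with respect to which $\{R_n\}_{n \geq 0}$ are orthogonal, and a direct induction using the recurrence yields the squared norms $\int R_n^2 \psi\, dx = 1/\theta_n$. For the support condition, one uses that $P$ is a stochastic operator and self-adjoint on $\ell^2\left(\theta\right)$, so its spectral radius is at most one; the spectral theorem identifies the support of $\psi$ with the spectrum of the associated Jacobi matrix, hence $\operatorname{supp}\left(\psi\right) \subset \left[-1,1\right]$. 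For uniqueness, once the support is compact the Hamburger moment problem is determinate, so $\psi$ is the unique such measure.

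The remaining computation is purely algebraic. By induction on $n$ one shows
\[
x^n R_i\left(x\right) = \sum_{j \geq 0} \left(P^n\right)_{ij} R_j\left(x\right),
\]
the base case $n=1$ being exactly \eqref{randomwalk}, and the inductive step an application of $P$ to both sides of the already-established identity for $x^{n-1}$. Multiplying by $R_j\left(x\right) \psi\left(x\right)$, integrating, and using orthogonality yields
\[
\int_{-1}^{1} x^n R_i\left(x\right) R_j\left(x\right) \psi\left(x\right) dx = \left(P^n\right)_{ij}\, \|R_j\|^2 = \frac{\mathbb{P}^n_{ij}}{\theta_j},
\]
which rearranges to \eqref{transp}.

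The main technical obstacle is that the state space is infinite: one must justify convergence of the series $\sum_j \left(P^n\right)_{ij} R_j\left(x\right)$ pointwise (or in $L^2\left(\psi\right)$) and the interchange of sum and integral against $R_j\left(x\right)\psi\left(x\right)$. This is where the boundedness of $P$ on $\ell^2\left(\theta\right)$ and the moment determinacy of $\psi$ enter decisively; a clean way is to observe that the orthonormalized polynomials $\sqrt{\theta_n}\, R_n$ form a complete orthonormal basis of $L^2\left(\psi\right)$, so the Parseval expansion of $x^n R_i$ in this basis is automatically convergent and the spectral representation of $P^n$ against $\delta_i, \delta_j$ becomes legitimate.
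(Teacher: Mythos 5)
The paper does not prove this proposition; it is quoted verbatim (up to notation) from Karlin--McGregor and used as a black box, so there is no in-paper argument to compare against. Your reconstruction is the standard proof of that theorem and is essentially correct: the detailed-balance identity $\theta_n p_n = \theta_{n+1}q_{n+1}$ does follow from the definition of $\theta_n$, Favard's theorem applies because $p_{n-1}q_n>0$, the norm computation $\int R_n^2\psi\,dx = 1/\theta_n$ is right, and since the recurrence coefficients are bounded by $1$ the Jacobi operator is bounded and self-adjoint, so the support of $\psi$ is its spectrum, which lies in $[-1,1]$ because $P$ is a stochastic contraction on $\ell^2\left(\theta\right)$; compact support then gives determinacy and hence uniqueness. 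Two small remarks. First, the ``main technical obstacle'' you flag at the end is not actually present: because $P$ is tridiagonal, $\left(P^n\right)_{ij}=0$ whenever $|i-j|>n$, so $x^nR_i = \sum_j\left(P^n\right)_{ij}R_j$ is a finite sum for each fixed $n$ and $i$, and the interchange of sum and integral is trivial --- no completeness or Parseval argument is needed. Second, strictly speaking Favard's theorem produces a positive measure, not necessarily one with a density; the statement's phrasing ``positive density function'' is an imprecision inherited from the paper, and in the paper's application the measure is indeed absolutely continuous, but your proof (correctly) only delivers a measure in general.
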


\begin{remark}
    Note that the polynomials $R_n$ are orthogonal with respect to $\psi\left(x\right)$. It follows from plugging $n=0$ into \eqref{transp}. 
\end{remark}
This proposition enables us to analyze the queuing process $Q_n$ via orthogonal polynomials, since $Q_n$ is a birth-death chain with parameters $p_i = c$, $q_i = c\left(1 - q^i\right)$ and $r_i = 1 - 2c + cq^i$.

\begin{proposition}
    The transition probabilities of the queuing process $Q_n$ \eqref{queue} can be expressed by orthogonal polynomials. Specifically, We have 
    \begin{equation}
        \mathbb{P}\left(Q_{n+1} = j| Q_1 = i\right) = \frac{1}{\left(q;q\right)_j}\int_{1 - 4c}^{1}x^n P_i\left(x\right) P_{j}\left(x\right)\psi\left(x\right) dx,\label{Qtrans}
    \end{equation}
    where $P_n\left(x\right) = H_n\left(\frac{x}{2c}-\frac{1}{2c}+1\right)$, $H_n\left(x\right)$ is the $a = 1 $ continuous big q-Hermite polynomials, $\psi\left(x\right) = \frac{1}{2c} w\left(\frac{x}{2c}-\frac{1}{2c}+1\right)$, $w\left(x\right)$ is the reference measure of $H_n\left(x\right)$. 
    Equivalently, 
    \begin{equation}
        \mathbb{P}\left(Q_{n+1} = j| Q_1 = i\right) = \frac{1}{\left(q;q\right)_j}\int_{-1}^{1}\left(1-2c+2cz\right)^n H_i\left(x\right) H_{j}\left(x\right)w\left(x\right) dx.\label{Htrans}
    \end{equation}
\end{proposition}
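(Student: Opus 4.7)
The plan is to recognize $Q_n$ as a birth--death chain and identify its Karlin--McGregor polynomials with an affine rescaling of the continuous big q-Hermite polynomials $H_n$, so that the uniqueness of the orthogonality measure of $H_n$ (Proposition \ref{uniqueness}) pins down $\psi$ completely. The two identities in the proposition will then differ only by a change of integration variable.

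First, reading off from Martin's construction in Theorem \ref{construction} after the limit $\lambda\uparrow x$, $\mu\downarrow x$, I would record the birth--death parameters $p_k=c$, $q_k=c(1-q^k)$, $r_k=1-2c+cq^k$ with $c=x(1-x)$. A direct computation gives $\theta_n=\prod_{i=0}^{n-1}p_i/\prod_{i=1}^{n}q_i=1/(q;q)_n$, matching the prefactor of \eqref{Qtrans}. The recurrence \eqref{randomwalk} for the associated Karlin--McGregor polynomials $R_n$ then reads $xR_n(x)=cR_{n+1}(x)+(1-2c+cq^n)R_n(x)+c(1-q^n)R_{n-1}(x)$ with $R_0=1$, $R_{-1}=0$.

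Second, I would perform the affine change of variable $y=(x-1+2c)/(2c)$, equivalently $x=1-2c+2cy$, and set $P_n(x):=H_n(y\mid q)$. Starting from the recurrence \eqref{qhermite} for the $a=1$ continuous big q-Hermite polynomials, multiplying through by $c$ and using $2cy=x-1+2c$ reproduces exactly the recurrence displayed above. Since a three-term recurrence with prescribed initial data $R_0=P_0=1$, $R_{-1}=P_{-1}=0$ determines the polynomial sequence uniquely, we conclude $R_n=P_n$.

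Third, Karlin--McGregor supplies a positive weight $\psi$ supported in $[-1,1]$ with $\mathbb{P}(Q_{n+1}=j\mid Q_1=i)=\theta_j\int x^n P_i(x)P_j(x)\psi(x)\,dx$. The push-forward of the big q-Hermite weight $w$ under $y\mapsto 1-2c+2cy$ is a candidate weight for the $P_n$, supported in $[1-4c,1]$; since $c=x(1-x)\le 1/4$ this interval is contained in $[-1,1]$, so Proposition \ref{uniqueness} forces $\psi(x)=(1/2c)\,w((x-1+2c)/(2c))$, which yields \eqref{Qtrans}. The equivalent form \eqref{Htrans} then follows from the substitution $z=(x-1+2c)/(2c)$ in the integral. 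The only delicate point is this last step: Proposition \ref{uniqueness} controls uniqueness only within the class of weights on $[-1,1]$, so it is essential that the transformed support $[1-4c,1]$ lie inside $[-1,1]$; otherwise a separate uniqueness argument would be required. This inclusion is automatic from $c\le 1/4$, and so no genuine obstacle is expected.
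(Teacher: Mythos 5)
Your proposal is correct and follows essentially the same route as the paper: identify $Q_n$ as a birth--death chain, compute $\theta_j=1/(q;q)_j$, match the Karlin--McGregor three-term recurrence with the affinely rescaled $a=1$ continuous big $q$-Hermite recurrence \eqref{qhermite}, and invoke uniqueness of the orthogonality measure (Proposition \ref{uniqueness}) to identify $\psi$ as the push-forward of $w$. Your extra remark on the support of the transformed weight is a reasonable (if slightly differently oriented) way of justifying the uniqueness step that the paper passes over quickly; both measures are compactly supported, so the moment problem is determinate and the identification is valid.
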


\begin{proof}
    It is obvious that $\theta_n = \frac{\prod_{i=0}^{n-1}p_i}{\prod_{i=1}^{n}q_i} = \frac{c^n}{c^n \prod_{i=1}^{n}\left(1 - q^i\right)} = \frac{1}{\left(q;q\right)_n}.$
    It is sufficient to prove that $P_n\left(x\right)$ and $H_n\left(x\right)$ are the same up to a change of variable, the statement of the density follows directly from the orthogonality and the uniqueness of reference measure provided by Proposition \ref{uniqueness}. 
    
    Using \eqref{randomwalk} with $p_n = c$, $q_n = c\left(1 - q^n\right)$, and $r_n = 1 - 2c + cq^n$, the orthogonal polynomials $P_n$ associated with the random walk $Q_n$ satisfy the recurrence relation
    \[
    xP_n\left(x\right)  = c P_{n+1}\left(x\right) + \left(1 - 2c + cq^n\right) P_{n}\left(x\right) + c\left(1 - q^n\right) P_{n-1}\left(x\right).
    \]
    To prove that $P_n\left(x\right) = H_n\left(\frac{x}{2c}-\frac{1}{2c}+1\right)$, it is sufficient to show that $H_n\left(\frac{x}{2c}-\frac{1}{2c}+1\right)$ satisfy the same recurrence relation. Let $y = \frac{x}{2c}-\frac{1}{2c}+1$,
    \[
    \begin{aligned}
        &\quad \quad \quad xH_n\left(y\right)   = c H_{n+1}\left(y\right) + \left(1 - 2c + cq^n\right) H_{n}\left(y\right) + c\left(1 - q^n\right) H_{n-1}\left(y\right)\\
        &\iff \left(2cy + 1 -2c\right)H_n\left(y\right)  = c H_{n+1}\left(y\right) + \left(1 - 2c + cq^n\right) H_{n}\left(y\right) + c\left(1 - q^n\right) H_{n-1}\left(y\right)\\
        &\iff 2y H_n\left(y\right)  = H_{n+1}\left(y\right) + q^n H_{n}\left(y\right) + \left(1 - q^n\right) H_{n-1}\left(y\right).
    \end{aligned}
    \]
    Using the recurrence relation \eqref{qhermite} for $H\left(x\right)$,
    \[
    2x H_n\left(x\right) = H_{n+1}\left(x\right) + q^n H_n\left(x\right) + \left(1 - q^n \right)H_{n-1}\left(x\right).
    \]
    Therefore, the polynomials $P_n\left(x\right)$ and $H_n\left(\frac{x}{2c}-\frac{1}{2c}+1\right)$ coincide. 
\end{proof}
\subsection{Proof of Theorem \ref{thm.3}}
Before we proceed to the proof of Theorem \ref{thm.3}, we formulate a technical lemma about an improper integral, which can be proved via elementary calculus. 
\begin{lemma}
    For any $a \in \mathbb{R}$, we have
    \begin{equation}
        \int_{0}^{\infty} e^{-x^2} \cos{ax} dx = \frac{\sqrt{\pi}}{2} e^{-\frac{1}{4}a^2}.\label{improper}
    \end{equation}
\end{lemma}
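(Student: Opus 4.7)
The plan is to recognize this as a standard Gaussian integral identity and prove it by the Feynman trick of differentiating under the integral sign, which gives a first-order linear ODE in the parameter $a$.

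First, I would define $I(a) := \int_{0}^{\infty} e^{-x^2}\cos(ax)\,dx$ and note that the integrand is dominated in absolute value by $e^{-x^2}$ uniformly in $a$, with the partial derivative in $a$ dominated by $x e^{-x^2}$. Both dominating functions are integrable on $[0,\infty)$, so differentiation under the integral sign is justified and gives
\begin{equation*}
I'(a) = -\int_{0}^{\infty} x\, e^{-x^2}\sin(ax)\,dx.
\end{equation*}

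Next, I would integrate by parts with $u = \sin(ax)$ and $dv = -x e^{-x^2}\,dx$, so that $v = \tfrac{1}{2}e^{-x^2}$ and $du = a\cos(ax)\,dx$. The boundary term $\bigl[\tfrac{1}{2}e^{-x^2}\sin(ax)\bigr]_{0}^{\infty}$ vanishes at both endpoints, leaving the identity $I'(a) = -\tfrac{a}{2}\,I(a)$. This is a separable ODE whose solution is $I(a) = I(0)\,e^{-a^2/4}$.

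Finally, I would identify the initial value using the classical Gaussian integral $I(0) = \int_{0}^{\infty} e^{-x^2}\,dx = \tfrac{\sqrt{\pi}}{2}$, which yields the desired formula. There is no serious obstacle here; the only points to be careful about are justifying the interchange of differentiation and integration (handled by the dominated convergence bounds above) and confirming the vanishing of the boundary terms in the integration by parts.
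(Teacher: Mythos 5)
Your proof is correct: the differentiation under the integral sign is properly justified by the dominating function $x e^{-x^2}$, the integration by parts and the resulting ODE $I'(a) = -\tfrac{a}{2}I(a)$ are right, and the initial value $I(0)=\tfrac{\sqrt{\pi}}{2}$ closes the argument. The paper itself omits any proof, stating only that the identity follows from elementary calculus, and your Feynman-trick argument is exactly the kind of standard elementary derivation being alluded to, so there is nothing to reconcile.
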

\begin{theorem}\label{thm.uni}
    The inhomogeneous random walk $Q_n$ converges in distribution to a q-independent limit after scaling. That is, for any $x\in\left(0,1\right)$, $c = x\left(1-x\right) \in \left(0, \frac{1}{4}\right)$, we have
    \begin{equation}
        \lim_{n \to \infty} \frac{Q_{n+1}}{\sqrt{n}} \overset{\left(d\right)}{=} |\mathcal{N}\left(0,2c\right)|.\label{scaling}
    \end{equation}
    Moreover, conditionally on the event $U_0 = u$ with $x = \tfrac{1+u}{2}$, the size of the convoy is asymptotically universal, 
    \[
    \lim_{n \to \infty} \frac{\mathbb{E}_x \left[\mathcal{C}_0^{n}\right]}{\sqrt{n}} =  \sqrt{\frac{4c}{\pi}}. 
    \]
\end{theorem}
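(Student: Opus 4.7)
The plan is to perform a singularity analysis at $z=1$ of the Karlin--McGregor representation~\eqref{Htrans}, leveraging the asymptotics of the continuous big q-Hermite polynomials and of their weight that were established earlier in Section~\ref{sec.universal}.

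Summing~\eqref{Htrans} over $i$ against the initial law $\pi$ of Proposition~\ref{prop.finite} gives
\[
\mathbb{P}_x\left(Q_{n+1}=j\right)=\frac{1}{\left(q;q\right)_j}\int_{-1}^{1}\bigl(1-2c\left(1-z\right)\bigr)^{n}\,\phi\left(z\right)\,H_j\left(z\mid q\right)\,w\left(z\right)\,dz,
\]
where, by~\eqref{Hgenerating} evaluated at $t=q$ combined with $\pi_i=\left(q;q\right)_\infty q^i/\left(q;q\right)_i$,
\[
\phi\left(z\right)=\sum_{i\ge 0}\pi_i H_i\left(z\mid q\right)=\frac{\left(q;q\right)_\infty^{2}}{\left(qe^{i\theta},qe^{-i\theta};q\right)_\infty},\qquad z=\cos\theta,
\]
is continuous on $[-1,1]$ with $\phi\left(1\right)=1$. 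The $n$-th power $\left(1-2c\left(1-z\right)\right)^n$ localises mass near $z=1$ on the scale $1-z\sim 1/n$, which dictates the substitution $z=1-s/n$ and the scaling $j=\lfloor y\sqrt n\rfloor$. Plugging in and combining
\[
\bigl(1-\tfrac{2cs}{n}\bigr)^{n}\to e^{-2cs},\qquad H_{\lfloor y\sqrt n\rfloor}\!\bigl(1-\tfrac{s}{n}\bigm|q\bigr)\to\cos\left(y\sqrt{2s}\right),\qquad w\!\left(1-\tfrac{s}{n}\right)\sim\frac{\sqrt{2n}\,\left(q;q\right)_\infty}{\pi\sqrt s}
\]
(by elementary expansion, by~\eqref{polynear} applied with $j$ in the role of $n$, and by~\eqref{dennear}) with $\phi\left(1-s/n\right)\to 1$ and $\left(q;q\right)_{\lfloor y\sqrt n\rfloor}\to\left(q;q\right)_\infty$, the $\left(q;q\right)_\infty$ factors cancel after multiplying by $\sqrt n$ to pass from a point mass to a density, and one obtains
\[
\rho\left(y\right)=\frac{\sqrt 2}{\pi}\int_0^\infty\frac{e^{-2cs}\cos\left(y\sqrt{2s}\right)}{\sqrt s}\,ds
=\frac{2}{\pi}\int_0^\infty e^{-cv^{2}}\cos\left(yv\right)\,dv
=\frac{1}{\sqrt{\pi c}}\,e^{-y^{2}/\left(4c\right)}
\]
via the substitution $v=\sqrt{2s}$ followed by~\eqref{improper}. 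This is the density of $|\mathcal N\left(0,2c\right)|$ on $\mathbb R_{>0}$, which proves~\eqref{scaling}.

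The main obstacle is the rigorous interchange of limit and integral, which I would handle by splitting $[-1,1]$ into a shrinking neighbourhood of $1$ and its complement. Outside a fixed neighbourhood of $1$, the factor $\left(1-2c\left(1-z\right)\right)^n$ is exponentially small, while the uniform bound~\eqref{Hunibound} on $H_j$ over compact subsets of $\left(-1,1\right]$ together with the integrability of $w$ controls the rest. In the relevant window near $z=1$, the endpoint estimate~\eqref{Uniboundw} combined with the boundedness of $\phi$ and of $H_j$ on that window produces an $n$-uniform dominating function of the form $Ce^{-cs}/\sqrt s$, so that dominated convergence applies to the rescaled integral; to avoid local-limit-theorem issues altogether, I would actually run this argument at the level of Laplace transforms $\mathbb{E}_x\bigl[e^{-\lambda Q_{n+1}/\sqrt n}\bigr]$, in which the sum $\sum_j e^{-\lambda j/\sqrt n} H_j\left(z\right)/\left(q;q\right)_j$ is evaluated in closed form via the generating function~\eqref{Hgenerating}, and then invert the Laplace transform of $|\mathcal N\left(0,2c\right)|$.

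For the expectation claim, the coupling identity~\eqref{Ecoupling} gives $\mathbb E_x\left[\#\mathcal C_0^n\right]=\mathbb E_x\left[Q_{n+1}\right]-\mathbb E_x\left[Q_1\right]$, and~\eqref{Eq1} provides $\mathbb E_x\left[Q_1\right]<\infty$ independently of $n$, so it suffices to upgrade~\eqref{scaling} to convergence of first moments via uniform integrability. I would obtain this from a uniform second-moment bound $\sup_n\mathbb E_x\left[\left(Q_{n+1}/\sqrt n\right)^{2}\right]<\infty$, either by applying the same spectral representation to the second factorial moment (using the second derivative at $t=q$ of the generating function~\eqref{Hgenerating}) or by the direct comparison $Q_{n+1}^2\le 2\mathcal U_n^{2}+2P_{n+1}^{2}$ with the coupled symmetric walk $P_n$, whose second moment is $O\left(n\right)$. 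Combined with $\mathbb E\left[|\mathcal N\left(0,2c\right)|\right]=\sqrt{4c/\pi}$, this yields the claimed limit.
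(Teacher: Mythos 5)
Your core argument for \eqref{scaling} is essentially the paper's: both analyses run the Karlin--McGregor representation \eqref{Htrans} near the endpoint $z=1$ under the scaling $j=\lfloor y\sqrt n\rfloor$, $1-z\asymp 1/n$, using the endpoint asymptotics \eqref{polynear} and \eqref{dennear}, the uniform bounds \eqref{Hunibound} and \eqref{Uniboundw} for domination, and \eqref{improper} to identify the folded-Gaussian density. The genuine differences are organizational and mostly to your credit: the paper first proves the local limit theorem for $Q_1=0$ (Lemma \ref{lem.local0}), then for each fixed $Q_1=k$ via $P_k\left(1\right)=1$, and finally averages over $\pi_k$, whereas you fold the initial law into the kernel at the start through $\phi\left(z\right)=\sum_i\pi_i H_i\left(z\mid q\right)=\left(q;q\right)_\infty^2/\left(qe^{i\theta},qe^{-i\theta};q\right)_\infty$, obtained from \eqref{Hgenerating} at $t=q$; this is correct ($\phi$ is continuous, bounded, $\phi\left(1\right)=1$) and slightly cleaner, but you should add a line justifying the interchange of $\sum_i$ with the integral (the crude bound \eqref{bound} only covers $q<1/4$; a bound on $\sup_{[-1,1]}|H_i|$ growing at most linearly in $i$, as suggested by the pole analysis behind \eqref{Hunibound} and the remark following it, suffices since $\pi_i$ decays geometrically). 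Your Laplace-transform variant, evaluating $\sum_j e^{-\lambda j/\sqrt n}H_j\left(z\right)/\left(q;q\right)_j$ in closed form via \eqref{Hgenerating} at $t=e^{-\lambda/\sqrt n}<1$, is a viable way to bypass the local-limit-to-weak-convergence step that the paper handles in a footnote.

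For the expectation claim your structure (\eqref{randomu}, \eqref{Ecoupling}, \eqref{Eq1}, then convergence of $\mathbb E_x\left[Q_{n+1}\right]/\sqrt n$) matches the paper, and you are right that a uniform integrability input is needed to upgrade \eqref{scaling} to convergence of first moments --- a point the paper's own proof passes over silently. However, both of your proposed justifications are shaky. The comparison $Q_{n+1}^2\le 2\mathcal U_n^{2}+2P_{n+1}^{2}$ is circular: by \eqref{coupling}, $\mathcal U_n=Q_{n+1}-P_{n+1}$, so a bound $\mathbb E\left[\mathcal U_n^2\right]=O\left(n\right)$ is exactly as unproven as the bound on $\mathbb E\left[Q_{n+1}^2\right]$ you want. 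The generating-function route is also problematic, since the second moment requires $\sum_j j^2 H_j\left(z\right)/\left(q;q\right)_j$, which sits at the boundary point $t=1$ of \eqref{Hgenerating} (not at $t=q$), where the series does not converge. A simple replacement closes the gap: a one-step drift computation gives $\mathbb E\left[Q_{i+1}^2\mid Q_i=k\right]\le k^2+2c\,k q^{k}+2c\le k^2+C_q$ because $\sup_{k\ge 0}kq^k<\infty$, hence $\mathbb E_x\left[Q_{n+1}^2\right]\le\mathbb E_x\left[Q_1^2\right]+C_q n=O\left(n\right)$ (with $\mathbb E_x\left[Q_1^2\right]<\infty$ by the same estimate as \eqref{Eq1}), which yields the required uniform integrability and completes your argument.
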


\begin{remark}
    It is possible to pursue a stronger convergence result for $Q_{\lfloor ns\rfloor}$ as a stochastic process, as in \cite{bryc2025limits}. However, we do not prove this, as the scaling limit for random variable $Q_n$ already provides sufficient information about the size of the convoy. 
\end{remark}
\begin{lemma}\label{lem.local0}
    Under the condition of $Q_1 = 0$, the local limit theorem for $\mathbb{P}\left(Q_{n+1} = \lfloor y \sqrt{n} \rfloor\right)$ holds. We have
    \begin{equation}
        \lim_{n \to \infty} \sqrt{n}\mathbb{P}\left(Q_{n+1} = \lfloor y \sqrt{n} \rfloor | Q_1 = 0\right) = \frac{1}{\sqrt{c\pi}} e^{-\frac{y^2}{4c}}.
    \end{equation}
    Thus, 
    \[
    \lim_{n \to \infty} \frac{Q_{n+1}}{\sqrt{n}}|_{Q_1 = 0} \overset{\left(d\right)}{=} |\mathcal{N}\left(0,2c\right)|.
    \]
\end{lemma}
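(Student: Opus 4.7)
The plan is to apply the Karlin--McGregor spectral representation \eqref{Htrans} with starting state $i=0$, which, since $H_0\equiv 1$, reduces the conditional probability to a single integral against the reference measure:
\[
\mathbb P(Q_{n+1}=j\mid Q_1=0)=\frac{1}{(q;q)_j}\int_{-1}^{1}(1-2c+2cz)^n H_j(z)\,w(z)\,dz.
\]
Specializing $j=\lfloor y\sqrt n\rfloor$, so $(q;q)_j\to(q;q)_\infty$, reduces the problem to the asymptotics of this integral. Because $(1-2c+2cz)^n$ peaks at $z=1$ and decays away from it, one expects the dominant contribution to come from a shrinking neighborhood of the upper spectral edge.

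First I would truncate the integral. On $[-1+\delta,1-\delta]$, $(1-2c+2cz)^n\le(1-2c\delta)^n$ and $|H_j|$ is uniformly bounded by \eqref{Hunibound}, giving exponentially small contribution even after multiplication by $\sqrt n$. On a small neighborhood of $-1$, the factor $(1-2c+2cz)^n$ is bounded by $(1-4c+2c\delta)^n<1$, and although $H_j(-1)$ grows linearly in $j\sim\sqrt n$ (as noted in the remark after \eqref{Hunibound}), $w$ is integrable there, so the product still decays exponentially.

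On the remaining window $z\in[1-\delta,1]$ I would change variables $z=1-t^2/n$, giving $(1-2c+2cz)^n=(1-2ct^2/n)^n\to e^{-2ct^2}$, the Jacobian $|dz|=(2t/n)\,dt$, and, by \eqref{dennear}, $w(1-t^2/n)\sim \frac{\sqrt 2\,(q;q)_\infty\,\sqrt n}{\pi t}$. Rescaling the index, \eqref{polynear} yields $H_{\lfloor y\sqrt n\rfloor}(1-t^2/n)\to \cos(\sqrt 2\,y t)$ uniformly for $t$ in compacts (match $t^2/n=u^2/(2m^2)$ with $m\sim y\sqrt n$ to get $u=\sqrt 2\,yt$). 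Multiplying these four pieces, the integrand times $\sqrt n$ converges pointwise to $\frac{2\sqrt 2\,(q;q)_\infty}{\pi}\,e^{-2ct^2}\cos(\sqrt 2\,yt)$; the uniform bound \eqref{Uniboundw} on $\sqrt{1-z}\,w(z)$ combined with \eqref{Hunibound} absorbs the $1/t$ blow-up against the Jacobian and produces a dominator of the form $K\,e^{-2ct^2}$, integrable in $t$ uniformly in $n$. Dominated convergence and the Gaussian--cosine identity \eqref{improper} (after $x=\sqrt{2c}\,t$) then yield
\[
\sqrt n\,\mathbb P(Q_{n+1}=\lfloor y\sqrt n\rfloor\mid Q_1=0)\longrightarrow \frac{1}{\sqrt{c\pi}}\,e^{-y^2/(4c)}.
\]
Since the right-hand side is exactly the density of $|\mathcal N(0,2c)|$ on $[0,\infty)$, a Scheffé-type argument applied to the non-negative variables $Q_{n+1}/\sqrt n$ upgrades the local limit to convergence in distribution.

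The main technical obstacle I anticipate is the dominated-convergence bookkeeping on the rescaled window: the singularity of $w$ like $(1-z)^{-1/2}$ at $z=1$ must be balanced exactly by the Jacobian $(2t/n)\,dt$ to yield an $n$-independent dominator in $t$, while preserving the uniform bound \eqref{Hunibound} on the moving compact subset $[1-\delta,1]$. The far-region estimate near $z=-1$ is elementary but relies on the at-most-linear growth of $H_j(-1)$ in $j$, which is only implicit in the generating-function analysis preceding \eqref{Hunibound} and needs to be made quantitative enough to beat the $\sqrt n$ normalization.
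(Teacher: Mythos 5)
Your proposal is correct and follows essentially the same route as the paper: the Karlin--McGregor representation \eqref{Htrans} with $H_0\equiv 1$, the edge change of variables $z=1-t^2/n$ (the paper uses $z=1-u^2/(2n)$, an equivalent normalization), the pointwise limits \eqref{polynear} and \eqref{dennear}, dominated convergence via \eqref{Hunibound} and \eqref{Uniboundw}, the Gaussian--cosine integral \eqref{improper}, and a smoothing/Scheff\'e step to pass from the local limit to weak convergence. The one loose end you flag --- controlling $H_j$ near $z=-1$, where pointwise linear growth of $H_j(-1)$ is not by itself a uniform bound on a neighborhood --- is handled in the paper more crudely but more robustly by the recurrence bound $\sup_{[-1,1]}|H_j|\le 4^j$, which with $j\sim y\sqrt n$ is still annihilated by the exponential decay of $(1-2c+2cz)^n$ on $[-1,0]$.
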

\begin{proof}
    We apply \eqref{Htrans} to $i = 0$, $j = \lfloor y \sqrt{n} \rfloor$, using the fact that $H_0 = 1$, 
    \[
    \sqrt{n}\mathbb{P}\left(Q_{n+1} = \lfloor y \sqrt{n} \rfloor| Q_1 = 0\right) = \frac{\sqrt{n}}{\left(q;q\right)_{\lfloor y \sqrt{n} \rfloor}}\int_{-1}^{1} \left(2cz + 1 - 2c\right)^n H_{\lfloor y \sqrt{n} \rfloor}\left(z\right) w\left(z\right) \; dz
    \]
    We observe that the integral  
    \[
    \int_{-1}^{1} \left(2cz + 1 - 2c\right)^n  H_{\lfloor y \sqrt{n} \rfloor}\left(z\right) w\left(z\right) \; dz
    \]
    is negligible on the interval \(\left[-1,0\right]\). To establish this, we first derive an upper bound for \(H_n\).

    \begin{lemma}
        Let $h_n:=\sup_{x\in[-1,1]}H_n\left(x\right)$ be the supremum of $H_n$ on $\left[-1,1\right]$. Then for any $n \geq 0$,
        \begin{equation}
            h_n \leq 4^n.\label{bound}
        \end{equation}
    \end{lemma}
    \begin{proof}
    $H_0 = 1 $, $H_1 = 2x - 1 \leq 3 < 4$, so the inequality holds for $n = 0,1$. For any $n\geq 2$, we prove it by induction.
    By definition \eqref{qhermite}, 
        \[
        H_{n+1}\left(x\right) = \left(2x-q^n\right) H_n\left(x\right) - \left(1-q^n\right) H_{n-1}\left(x\right).
        \]
    Taking the absolute value and the supremum on both sides of the inequality, we obtain
    \[
    h_{n+1} = \sup_{x\in[-1,1]} \big| \left(2x-q^n\right) H_n\left(x\right) - \left(1-q^n\right) H_{n-1}\left(x\right) \big| < 3 h_{n} + h_{n-1} < 4^{n+1}.
    \]
    \end{proof}
    When $z \in [-1,0]$ and $c \in \left(0, \frac{1}{4}\right)$, $2cz + 1 - 2c < 1-2c< 1$. We have 
    \[
    \big|\int_{-1}^{0} \left(2cz + 1 - 2c\right)^n H_{\lfloor y \sqrt{n} \rfloor}\left(z\right) w\left(z\right) \; dz\big|< \left(1-2c\right)^n h_{y\sqrt{n}} \int_{-1}^{0} w\left(z\right) \; dz < \left(1-2c\right)^n 4^{y\sqrt{n}}.
    \]
    We conclude by the fact that $\left(1-2c\right)4^{y\sqrt{n}} \to 0$, when $n \to \infty$. 
    From the previous observation, we obtain  
    \[
    \begin{aligned}
    \sqrt{n}\mathbb{P}\left(Q_{n+1} = \lfloor y \sqrt{n} \rfloor \middle| Q_1 = 0\right) 
    &= \frac{\sqrt{n}}{\left(q;q\right)_{\lfloor y \sqrt{n} \rfloor}} \int_{0}^{1} \left(2cz + 1 - 2c\right)^n  H_{\lfloor y \sqrt{n} \rfloor}\left(z\right) w\left(z\right) \; dz + o\left(1\right).
    \end{aligned}
    \]

    We now perform the change of variables
    \[
    z = 1 - \frac{u^2}{2n}, \qquad \; dz = -\frac{u}{n}du,
    \]
    which maps \(z \in [0,1]\) to \(u \in [\sqrt{2n},0]\). Substituting into the integral gives
    \begin{equation}
        \sqrt{n}\mathbb{P}\left(Q_{n+1} = \lfloor y \sqrt{n} \rfloor \middle| Q_1 = 0\right) = \frac{1}{\left(q;q\right)_{\lfloor y \sqrt{n} \rfloor}}\int_{0}^{\sqrt{2n}}\left(1 - \frac{cu^2}{n}\right)^{n} H_{\lfloor y \sqrt{n} \rfloor}\left(1 - \tfrac{u^2}{2n}\right) w\left(1 - \tfrac{u^2}{2n}\right)\frac{udu}{\sqrt{n}}.\label{integrand}
    \end{equation}
    Letting $n \to \infty$, we have the following limits for any $u \in \mathbb{R}$ and $y >0$,
    \[
    \lim_{n \to \infty}\left(1 - \frac{cu^2}{n}\right)^{n} = e^{-cu^2}, \lim_{n \to \infty}\frac{1}{\left(q;q\right)_{\lfloor y \sqrt{n} \rfloor}} = \frac{1}{\left(q;q\right)_{\infty}}.
    \]
    To analyze the asymptotics of the integrand, we apply \eqref{polynear} and \eqref{dennear} with the appropriate change of variables. 
    By \eqref{polynear} with $u \mapsto yu$ and $n \mapsto y\sqrt{n}$, we obtain
    \[
    \lim_{n \to \infty} H_{\lfloor y \sqrt{n} \rfloor}\left(1 - \frac{u^2}{2n}\right) = \cos\left(yu\right).
    \]
    Similarly, setting $\epsilon = \tfrac{u^2}{2n}$ in \eqref{dennear} gives
    \[
    \lim_{n \to \infty} \frac{u}{\sqrt{n}} w\left(1 - \frac{u^2}{2n}\right) = \frac{2}{\pi} \left(q;q\right)_{\infty}.
    \]
    By using the bounds \eqref{Hunibound} and \eqref{Uniboundw} that we proved in the previous section, we have the following bounds (uniform in $n$) for the integrand in \eqref{integrand} before the limit:
    \[
    \left(1 - \frac{cu^2}{n}\right)^{n} \le e^{-cu^2}, \qquad H_{\lfloor y\sqrt{n} \rfloor}\left(1 - \frac{u^2}{2n}\right) \le C\left(q, \theta_0 = \frac{\pi}{2}\right) \qquad \frac{u}{\sqrt{n}} w\left(1 - \frac{u^2}{2n}\right) \le C\left(q\right),
    \]
    Hence, by applying the dominated convergence theorem, we obtain,
    \[
    \lim_{n \to \infty}\sqrt{n}\mathbb{P}\left(Q_{n+1} = \lfloor y \sqrt{n} \rfloor| Q_1 = 0\right) = \frac{2}{\pi}\int_{0}^{\infty} e^{-cu^2} \cos{yu} \;du.
    \]
    Apply the change of variables $u=v/\sqrt{c}$. Then $du=dv/\sqrt{c}$ and
    \[
    \int_{0}^{\infty} e^{-c u^{2}}\cos\left(yu\right)du =\frac{1}{\sqrt{c}}\int_{0}^{\infty} e^{-v^{2}}\cos\left(\frac{y}{\sqrt{c}}v\right)dv.
    \]
    By lemma \eqref{improper} with $a=y/\sqrt{c}$, we obtain
    \[
    \int_{0}^{\infty} e^{-c u^{2}}\cos\left(yu\right)du =\frac{1}{\sqrt{c}}\cdot \frac{\sqrt{\pi}}{2}e^{-\frac{y^{2}}{4c}}.
    \]
    Multiplying by $\frac{2}{\pi}$ yields
    \[ \lim_{n\to\infty}\sqrt{n}\mathbb{P}\left(Q_{n+1}=\bigl\lfloor y\sqrt{n}\bigr\rfloor \middle| Q_1=0\right) =\frac{1}{\sqrt{c\pi}}e^{-\frac{y^2}{4c}},
    \]
    as claimed in the first statement of Lemma \ref{lem.local0}.

    As for the second statement of the lemma, observe that 
    \[
    \frac{1}{\sqrt{c\pi}}e^{-\frac{y^2}{4c}}, \quad y \in [0,\infty),
    \]
    is the density of the random variable \(|\mathcal{N}\left(0,2c\right)|\). The passage from the local limit theorem to the corresponding weak convergence statement is standard in probability theory; see, for example, \cite[Theorem~3.3]{billingsley2013convergence}. \footnote{When the pointwise local limit theorem holds, one can consider $\frac{Q_{n+1} + U_n}{\sqrt{n}}$, where $U_n$ is a family of i.i.d uniform random variable in $[0,1]$. This new sequence of variables $\frac{Q_{n+1} + U_n}{\sqrt{n}}$ has density, which converges to the density $\frac{1}{\sqrt{c\pi}}e^{-\frac{y^2}{4c}}$. The convergence of density implies convergence in distribution. By Slutsky's theorem, $\frac{Q_{n+1}}{\sqrt{n}}$ also converges in distribution to the same limit.}
\end{proof}

\begin{proof}[Proof of the theorem]
    We first establish, for any fixed $k$, that  
    \begin{equation}
        \lim_{n \to \infty} \sqrt{n}\mathbb{P}\left(Q_{n+1} = \lfloor y \sqrt{n} \rfloor | Q_1 = k\right) = \frac{1}{\sqrt{c\pi}} e^{-\frac{y^2}{4c}}.\label{localk}
    \end{equation}
    
    To see this, we apply \eqref{Qtrans} to $\mathbb{P}\left(Q_{n+1} = \lfloor y \sqrt{n} \rfloor \mid Q_1 = k\right)$ with $i = k$ and $j = \lfloor y \sqrt{n} \rfloor$. The only difference compared to the case $Q_1 = 0$ is that while $P_0 = 1$, for general $k$, $P_k$ is a polynomial of degree $k$. However, from the definition of $P_k$ \eqref{randomwalk}, substituting $x = 1$ gives
    \[
    P_k\left(1\right) = p_nP_{k+1}\left(1\right) + q_nP_k\left(1\right) + r_nP_{k-1}\left(1\right).
    \]
    We therefore deduce that $P_k\left(1\right) = 1$ for all $k \geq 0$, which implies that the sum of the coefficients of $P_k$ equals $1$. It follows that 
    \[
    \frac{\sqrt{n}}{\left(q;q\right)_{\lfloor y \sqrt{n} \rfloor}}\int_{-1}^{1} x^n P_k\left(x\right) P_{\lfloor y \sqrt{n} \rfloor}\left(x\right) \psi\left(x\right)  dx
    \]
    can be expressed as a linear combination of terms of $\frac{\sqrt{n}}{\left(q;q\right)_{\lfloor y \sqrt{n} \rfloor}} \int_{-1}^{1} x^{n+k} P_{\lfloor y \sqrt{n} \rfloor}\left(x\right) \psi\left(x\right)  dx$ for various $k$, with coefficients summing to $1$. Hence, to prove \eqref{localk}, it suffices to establish the following limit:
    \begin{equation}
        \lim_{n \to \infty} \frac{\sqrt{n}}{\left(q;q\right)_{\lfloor y \sqrt{n} \rfloor}} \int_{-1}^{1} x^{n+k} P_{\lfloor y \sqrt{n} \rfloor}\left(x\right) \psi\left(x\right)  dx = \frac{1}{\sqrt{c\pi}} e^{-\frac{y^2}{4c}}. \label{iden}
    \end{equation}
    
    Indeed, \eqref{iden} holds because the additional constant $k$ does not affect the term $\left(1 - \frac{u^2}{2n}\right)^{n+k}$ as $n \to \infty$. Thus, it converges to the same limit as in the case $k=0$, by exactly the same argument used in Lemma \ref{lem.local0}.
    
    Similar to the proof of Theorem \ref{thm.uni}, from the local limit theorem \eqref{localk}, we obtain
    \[
    \lim_{n \to \infty} \frac{Q_{n+1}}{\sqrt{n}} \Big|_{Q_1 = k} \overset{\left(d\right)}{=} |\mathcal{N}\left(0,2c\right)|.
    \]
    To conclude, we still need to show the unconditional convergence of $\frac{Q_{n+1}}{\sqrt{n}}$. By the Portmanteau theorem for convergence in distribution, for every bounded continuous test function $\varphi$ and for each fixed $k$, we have
    \[
    \mathbb{E}\left[\varphi\left(\tfrac{Q_{n+1}}{\sqrt{n}}\right)\Big|Q_1=k\right] \xrightarrow[n\to\infty]{} \mathbb{E}\left[\varphi\left(|\mathcal{N}\left(0,2c\right)|\right)\right].
    \]
    Since $\pi_k$ is a probability measure and $\varphi$ is bounded, the dominated convergence theorem gives
    \[
    \mathbb{E}\left[\varphi\left(\frac{Q_{n+1}}{\sqrt{n}}\right)\right] = \sum_{k\ge 0}\pi_k\mathbb{E}\left[\varphi\left(\frac{Q_{n+1}}{\sqrt{n}}\right)\big|Q_1 = k\right] \xrightarrow[n\to\infty]{} \sum_{k\ge 0}\pi_k\mathbb{E}\left[\varphi\left(|\mathcal{N}\left(0,2c\right)|\right)\right] = \mathbb{E}\left[\varphi\left(|\mathcal{N}\left(0,2c\right)|\right)\right].
    \]
    By Portmenteau theorem again, this is equivalent to the convergence stated in \eqref{scaling}:
    \[
    \lim_{n \to \infty}\frac{Q_{n+1}}{\sqrt{n}} \overset{d}{=} |\mathcal{N}\left(0,2c\right)|.
    \]
    
    Now we turn to the second statement of the theorem. From \eqref{scaling}, together with the fact that the expectation of a folded Gaussian $|\mathcal{N}\left(0,\sigma\right)|$ is $\sigma\sqrt{\tfrac{2}{\pi}}$, and using Martin’s characterization of the convoy set \eqref{randomu}, the coupling argument \eqref{Ecoupling}, and the finiteness of $\mathbb{E}\left[Q_1\right]$ \eqref{Eq1}, we obtain
    \[
    \lim_{n \to \infty} \frac{\mathbb{E}_x\left[\mathcal{C}_0^{n}\right]}{\sqrt{n}} = \lim_{n \to \infty}\frac{\mathbb{E}_x\left[\mathcal{U}_{n}\right]}{\sqrt{n}} = \lim_{n \to \infty} \frac{\mathbb{E}_x\left[Q_{n+1} - Q_1\right]}{\sqrt{n}} = \lim_{n \to \infty} \frac{\mathbb{E}_x\left[Q_{n+1}\right]}{\sqrt{n}} = \sqrt{\frac{4x\left(1-x\right)}{\pi}}.
    \]

\end{proof}

\section{Weakly asymmetric limit of the convoy size}\label{qto1}
In this section, we study asymptotic result for the size of convoy as $q \to 1$. Throughout this section, we fix $z\in\mathbb{R}$ and set
\begin{equation}
    \label{standardnotation}q_n = e^{-\gamma/\sqrt{n}}, \qquad m_n\left(z\right) = \lfloor z\sqrt{n} + \tfrac{1}{\gamma}\sqrt{n}\log\sqrt{n}\rfloor.
\end{equation}

\begin{definition}
For $0<q<1$ and $z\in\mathbb{C}\setminus\{0,-1,-2,\ldots\}$, the $q$-Gamma function is
\begin{equation}
    \Gamma_q\left(z\right):= \left(1-q\right)^{1-z}\frac{\left(q;q\right)_\infty}{\left(q^{z};q\right)_\infty}.\label{qgamma}
\end{equation}

\end{definition}
\begin{lemma}
If $z \neq 0,-1,-2,\ldots$ then
\begin{equation}
\lim_{q \nearrow 1} \Gamma_{q}\left(z\right) = \Gamma\left(z\right).\label{qconverge}
\end{equation}
\end{lemma}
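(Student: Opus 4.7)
The plan is to use the $q$-functional equation to reduce to the positive real axis, and then to prove convergence there by comparing with Euler's limit formula.

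First I would verify the $q$-functional equation. From \eqref{qgamma} and the telescoping $(q^z;q)_\infty = (1-q^z)(q^{z+1};q)_\infty$, direct substitution yields
\[
\Gamma_q(z+1) = [z]_q\,\Gamma_q(z), \qquad [z]_q := \frac{1-q^z}{1-q}.
\]
The elementary limit $\lim_{q \nearrow 1}(1-q^z)/(1-q)=z$ (via L'H\^opital), combined with $\Gamma(z+1) = z\,\Gamma(z)$, produces the iterated identity $\Gamma_q(z) = \Gamma_q(z+N)/\prod_{k=0}^{N-1}[z+k]_q$ together with its classical analogue, reducing the lemma to the case $\mathrm{Re}(z) > 0$: for any admissible $z$, choose $N$ with $\mathrm{Re}(z)+N > 0$ and invoke both identities.

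For $\mathrm{Re}(z) > 0$, using $(a;q)_\infty = (a;q)_N\,(aq^N;q)_\infty$, I would decompose
\[
\Gamma_q(z) = \underbrace{(1-q)^{1-z}\frac{(q;q)_N}{(q^z;q)_N}}_{A_N(q)} \cdot \underbrace{\frac{(q^{N+1};q)_\infty}{(q^{N+z};q)_\infty}}_{T_N(q)},
\]
and choose a joint scaling $N = N(q)$ with $N(q)(1-q) \to \infty$ as $q \nearrow 1$ (e.g.\ $N(q) = \lfloor (1-q)^{-2}\rfloor$). Then $q^{N(q)} \to 0$ forces $T_{N(q)}(q) \to 1$ directly since both infinite products tend to $1$. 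For the head factor $A_{N(q)}(q) = (1-q)^{1-z}\prod_{k=0}^{N(q)-1}\tfrac{[k+1]_q}{[z+k]_q}$, I would split the product at the intermediate scale $M_q \sim 1/(1-q)$: on $\{k < M_q\}$ we have $[k+1]_q \sim k+1$ and $[z+k]_q \sim z+k$, so the partial product is asymptotically $\Gamma(z)\,M_q^{1-z}$ by Euler's formula; on $\{M_q \le k < N(q)\}$ the ratios $(1-q^{k+1})/(1-q^{z+k})$ are $1+o(1)$ uniformly since $q^k$ is exponentially small. Combining with $(1-q)^{1-z}$ via $M_q(1-q) \to 1$, so that $(M_q(1-q))^{1-z} \to 1$, yields $A_{N(q)}(q) \to \Gamma(z)$.

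The main obstacle is the cancellation between $(1-q)^{1-z}$ and the infinite product $(q;q)_\infty/(q^z;q)_\infty$: each individually diverges or vanishes as $q \nearrow 1$ (with opposite behavior depending on $\mathrm{Re}(z)$), and only their product has a finite nonzero limit. Tracking this cancellation forces the delicate double cutoff $N(q) \gg M_q \sim 1/(1-q)$ described above. A technically cleaner alternative is to take logarithms and treat
\[
\log\Gamma_q(z) = (1-z)\log(1-q) + \sum_{n\ge 0}\bigl[\log(1-q^{n+1}) - \log(1-q^{n+z})\bigr]
\]
by Euler–Maclaurin in $\epsilon = -\log q$; the logarithmic divergence $(1-z)\log\epsilon$ coming from the integral approximation then precisely cancels the $(1-z)\log(1-q) \sim (1-z)\log\epsilon$ prefactor, leaving $\log\Gamma(z)$.
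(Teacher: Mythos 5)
The paper does not actually prove this lemma; it is quoted as a classical fact about the $q$-Gamma function, so your proposal has to stand on its own. Its final conclusion is true, but the central step is broken. With your choice $M_q\sim 1/(1-q)$, the assertion that on $\{M_q\le k<N(q)\}$ the ratios $(1-q^{k+1})/(1-q^{k+z})$ contribute $1+o(1)$ "since $q^k$ is exponentially small'' is false: at $k\approx M_q$ one has $q^{k}\approx e^{-1}$, which is of constant order, and although each individual ratio is $1+O(1-q)$, there are order $1/(1-q)$ factors carrying non-negligible $q^k$, so the product over that range converges to $\left(1-e^{-1}\right)^{z-1}\neq 1$. The case $z=2$ makes this transparent by telescoping: $\prod_{k=M_q}^{N-1}\frac{1-q^{k+1}}{1-q^{k+2}}=\frac{1-q^{M_q+1}}{1-q^{N+1}}\to 1-e^{-1}$. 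Symmetrically, your first sub-claim also fails: $[k+1]_q\sim k+1$ is not uniform up to $k\sim 1/(1-q)$ (at $k=M_q$ the ratio is about $1-e^{-1}$), and again for $z=2$ the head product is $\frac{1-q}{1-q^{M_q+1}}\sim\frac{1-q}{1-e^{-1}}$, not $\Gamma(2)M_q^{-1}$. The two errors are exact reciprocals, $\left(1-e^{-1}\right)^{1-z}$ and $\left(1-e^{-1}\right)^{z-1}$, so the product of your two wrong asymptotics gives the right answer — but that cancellation is precisely the delicate point of the lemma, and your argument never establishes it. Choosing instead $M_q(1-q)\to 0$ repairs the head estimate but then the middle range contributes a factor asymptotic to $\left(M_q(1-q)\right)^{z-1}$, which is again not $1+o(1)$; either way the middle-range contribution must be computed, not dismissed.

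The reduction to $\mathrm{Re}\,z>0$ via the functional equation and the tail estimate $T_{N(q)}(q)\to 1$ are fine, and the logarithmic Euler--Maclaurin route you mention at the end is a standard correct strategy. But as written it is only an assertion that the $(1-z)\log(1-q)$ prefactor cancels the divergence of the sum and "leaves $\log\Gamma(z)$''; to close the argument you would have to carry out that comparison with an explicit remainder (or identify the resulting convergent expression with a known integral representation of $\log\Gamma$), or alternatively keep your splitting but honestly evaluate the $[M_q,N(q))$ block as a Riemann sum for $\int_{M_q(1-q)}^{\infty}\frac{e^{-t}}{1-e^{-t}}\,dt$ and exhibit the cancellation with the head term.
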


\begin{lemma}
Let $x\in \mathbb{R}$, $\gamma > 0$ and $n\in \mathbb{N}$. Let $q_n $ and $m_n\left(x\right)$ be as in our standard notation \eqref{standardnotation}. Then, 
\begin{equation}
    \lim_{n\to\infty} \left(q_n^{m_n\left(z\right)};q_n\right)_\infty = \exp\Big(-\tfrac{1}{\gamma} e^{-\gamma z}\Big).
    \label{den1}
\end{equation}
\end{lemma}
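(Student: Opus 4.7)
The plan is to take logarithms of the $q$-Pochhammer product and show that the resulting series is a small perturbation of a single geometric series whose limit yields the claimed exponential.

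First I would pin down the asymptotic of the base $q_n^{m_n(z)}$. Writing $m_n(z) = z\sqrt{n} + \tfrac{1}{\gamma}\sqrt{n}\log\sqrt{n} - \theta_n$ with $\theta_n \in [0,1)$ the fractional part, the exponent satisfies
\[
\frac{\gamma m_n(z)}{\sqrt n} = \gamma z + \log\sqrt n + O\bigl(1/\sqrt n\bigr),
\]
so that
\[
q_n^{m_n(z)} = \exp\!\Bigl(-\tfrac{\gamma}{\sqrt n}m_n(z)\Bigr) = \frac{e^{-\gamma z}}{\sqrt n}\bigl(1+o(1)\bigr) \longrightarrow 0.
\]
Combined with $1-q_n = \tfrac{\gamma}{\sqrt n} + O(1/n)$, this gives
\[
\frac{q_n^{m_n(z)}}{1-q_n} \xrightarrow[n\to\infty]{} \frac{e^{-\gamma z}}{\gamma}.
\]

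Next I would take logarithms of $\left(q_n^{m_n(z)};q_n\right)_\infty = \prod_{k\ge 0}\bigl(1 - q_n^{m_n(z)+k}\bigr)$. For $n$ sufficiently large every factor $q_n^{m_n(z)+k}$ lies in $[0,1/2]$, so the expansion $\log(1-x) = -x + r(x)$ with $|r(x)| \le x^2$ applies uniformly. The leading term telescopes:
\[
-\sum_{k=0}^\infty q_n^{m_n(z)+k} = -\frac{q_n^{m_n(z)}}{1-q_n} \longrightarrow -\frac{e^{-\gamma z}}{\gamma}.
\]
The remainder is dominated by another geometric series,
\[
\Bigl|\sum_{k=0}^\infty r\bigl(q_n^{m_n(z)+k}\bigr)\Bigr| \le \sum_{k=0}^\infty q_n^{2(m_n(z)+k)} = \frac{q_n^{2m_n(z)}}{1-q_n^{2}} = O\bigl(1/\sqrt n\bigr),
\]
since $q_n^{2m_n(z)} = O(1/n)$ and $1-q_n^2 \sim 2\gamma/\sqrt n$. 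Exponentiating yields \eqref{den1}.

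There is no serious obstacle here; the only technical care required is to confirm that the $O(1)$ contribution from the floor in $m_n(z)$ only disturbs the exponent by $O(1/\sqrt n)$, which is absorbed into the $1+o(1)$ factor and does not affect either the leading geometric term or the quadratic error bound.
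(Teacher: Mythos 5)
Your proposal is correct and follows essentially the same route as the paper: take logarithms of the product, expand $\log(1-y)=-y+O(y^{2})$ uniformly since $q_n^{m_n(z)+k}=O(n^{-1/2})$, sum the geometric series to get $-q_n^{m_n(z)}/(1-q_n)\to -e^{-\gamma z}/\gamma$, and bound the quadratic remainder by $O(n^{-1/2})$. The only cosmetic difference is that you track the floor via the fractional part $\theta_n$ explicitly, whereas the paper works with the unfloored exponent and notes the floor perturbs the logarithm by $o(1)$.
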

\begin{proof}
    It is sufficient to prove the claim for the non-integer quantity
    \[
    k_n\left(z\right) := \sqrt{n}z+\tfrac{1}{\gamma}\sqrt{n}\log\left(\sqrt{n}\right).
    \]  
    since the floor in the lemma changes $\log\left(q_n^{k_n};q_n\right)_\infty$ by only $o\left(1\right)$.
    We have
    \[
    \left(q_n^{k_n\left(z\right)};q_n\right)_\infty = \exp\left(\sum_{j=0}^\infty \log\bigl(1-q_n^{k_n\left(z\right)+j}\bigr)\right), \qquad y_{n,j} := q_n^{k_n\left(z\right)+j}.
    \]
    With $q_n = e^{-\gamma/\sqrt{n}}$,
    \[
    y_{n,j} = e^{-\left(\gamma/\sqrt{n}\right)\left(k_n\left(z\right)+j\right)} = e^{-\gamma z} e^{-\log\left(\sqrt{n}\right)} e^{-\gamma j/\sqrt{n}} = e^{-\gamma z}n^{-1/2}e^{-\gamma j/\sqrt{n}}.
    \]
    Hence $0< y_{n,j}\le e^{-\gamma z}/\sqrt{n}$. In particular, once $n$ is large enough we have $y_{n,j}\le \delta$ for all $j$, with some $\delta\in\left(0,1/2\right)$. Thus the $y_{n,j}$ are uniformly small.
    For $0\le y\le \delta$, 
    \[
    \log\left(1-y\right) = -y + R\left(y\right), \qquad |R\left(y\right)| \le C y^2.
    \]
    Therefore
    \[
    \sum_{j=0}^\infty \log\left(1-y_{n,j}\right) = - \sum_{j=0}^\infty y_{n,j} + O\left( \sum_{j=0}^\infty y_{n,j}^2 \right).
    \]
    Since $y_{n,j} = e^{-\gamma z}n^{-1/2}e^{-\gamma j/\sqrt{n}}$,
    \[
    \sum_{j=0}^\infty y_{n,j} = \frac{e^{-\gamma z}}{\sqrt{n}} \sum_{j=0}^\infty e^{-\gamma j/\sqrt{n}} = \frac{e^{-\gamma z}}{\sqrt{n}}\cdot \frac{1}{1-e^{-\gamma/\sqrt{n}}} = \frac{1}{\gamma} e^{-\gamma z} + o\left(1\right).
    \]
    Similarly, we can prove the error term is $O\left(\frac{1}{n}\right)$. Thus, 
    \[
    \left(q_n^{k_n\left(z\right)};q_n\right)_\infty = \exp\Big(-\tfrac{1}{\gamma} e^{-\gamma z} + o\left(1\right)\Big) \longrightarrow \exp\Big(-\tfrac{1}{\gamma} e^{-\gamma z}\Big).
    \]
\end{proof}
\begin{proposition}
Consider a sequence of random variables $X^n_0$ on $\mathbb{Z}^{+}$, whose distribution is defined by
\[
\mathbf{P}\left(X^n_0=k\right)=\frac{\left(q_n;q_n\right)_\infty}{\left(q_n;q_n\right)_k}q_n^k,\qquad k=0,1,2,\dots
\]
Then, as $n\to\infty$
\begin{equation}
    \frac{X^n_0-\tfrac{1}{\gamma}\sqrt{n}\log\left(\sqrt{n}\right)}{\sqrt{n}} \overset{d}{\Rightarrow} X,\label{x0weakly}
\end{equation}
where $X$ has density
\[
f_X\left(x\right)=e^{-\gamma x}\exp\Big(-\tfrac{1}{\gamma} e^{-\gamma x}\Big),\qquad x\in\mathbb{R}.
\]
\end{proposition}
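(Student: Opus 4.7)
The plan is to prove a local limit theorem directly and then upgrade to weak convergence using the same auxiliary–uniform trick that appeared in the proof of Lemma \ref{lem.local0}. Writing $p_k^{(n)} := \mathbb{P}(X_0^n = k) = \frac{(q_n;q_n)_\infty}{(q_n;q_n)_k}q_n^k$ and using the factorization $(q_n;q_n)_\infty = (q_n;q_n)_k (q_n^{k+1};q_n)_\infty$, I rewrite
\[
\sqrt{n}\,p_{m_n(z)}^{(n)} = \bigl(\sqrt{n}\, q_n^{m_n(z)}\bigr)\cdot\bigl(q_n^{m_n(z)+1};q_n\bigr)_\infty .
\]
The two factors will be analyzed independently.

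For the first factor, since $q_n = e^{-\gamma/\sqrt{n}}$ and $m_n(z) = z\sqrt{n}+\tfrac{1}{\gamma}\sqrt{n}\log\sqrt{n}+O(1)$, we get
\[
\sqrt{n}\,q_n^{m_n(z)} = \sqrt{n}\exp\Bigl(-\gamma z - \log\sqrt{n} + O(n^{-1/2})\Bigr)\longrightarrow e^{-\gamma z}.
\]
For the second factor, the shift of the exponent by $+1$ is negligible because $q_n^{m_n(z)+1}/q_n^{m_n(z)} = q_n \to 1$, so by the lemma \eqref{den1} applied to $z$,
\[
\bigl(q_n^{m_n(z)+1};q_n\bigr)_\infty \longrightarrow \exp\Bigl(-\tfrac{1}{\gamma}e^{-\gamma z}\Bigr).
\]
Multiplying the two limits yields $\sqrt{n}\,p_{m_n(z)}^{(n)} \to e^{-\gamma z}\exp(-\tfrac{1}{\gamma}e^{-\gamma z}) = f_X(z)$, which is the desired local limit statement. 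A quick substitution $u = \tfrac{1}{\gamma}e^{-\gamma x}$ verifies $\int_{\mathbb{R}} f_X = 1$, so $f_X$ is indeed a probability density.

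To pass from the local limit theorem to weak convergence, I introduce i.i.d.\ $U_n \sim \mathrm{Unif}[0,1]$ independent of $X_0^n$, and set
\[
Y_n := \frac{X_0^n + U_n - \tfrac{1}{\gamma}\sqrt{n}\log\sqrt{n}}{\sqrt{n}}.
\]
The random variable $Y_n$ has density $g_n(z) = \sqrt{n}\,p_{m_n(z)}^{(n)}$ on each interval of length $1/\sqrt{n}$, and the pointwise limit $g_n(z) \to f_X(z)$ combined with Scheffé's lemma gives $Y_n \Rightarrow X$. Since $(X_0^n-\tfrac{1}{\gamma}\sqrt{n}\log\sqrt{n})/\sqrt{n}$ differs from $Y_n$ by $U_n/\sqrt{n} \to 0$, Slutsky's theorem yields the claimed weak convergence \eqref{x0weakly}.

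The argument is mostly routine; the only delicate point is confirming uniform control of the two factors on compact $z$-intervals so that the Scheffé step is justified. The first factor is already uniform via elementary estimates on $\log q_n$, while for the second factor I would reuse the dominated-convergence bookkeeping done in the proof of \eqref{den1}, noting that $q_n^{m_n(z)+j}$ remains bounded by a summable envelope $e^{-\gamma z_0}n^{-1/2}e^{-\gamma j/\sqrt{n}}$ uniformly for $z$ in a compact set $[-z_0,z_0]$. This is the main, though still modest, technical obstacle.
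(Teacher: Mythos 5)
Your proposal is correct and follows essentially the same route as the paper: the same factorization $\sqrt{n}\,p^{(n)}_{m_n(z)}=\bigl(\sqrt{n}\,q_n^{m_n(z)}\bigr)\bigl(q_n^{m_n(z)+1};q_n\bigr)_\infty$, the same appeal to the limit \eqref{den1} for the $q$-Pochhammer factor, and the same uniform-smoothing upgrade (add an independent $\mathrm{Unif}[0,1]$, apply Scheff\'e, then Slutsky) that the paper invokes via the footnote in Lemma \ref{lem.local0}. The only remark is that Scheff\'e's lemma needs only pointwise convergence of the densities to a probability density, so the uniform-on-compacts control you flag as the main obstacle is not actually required.
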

\begin{remark}
    The initial distribution $\pi_k^{q}$ of $Q_1^{q}$ is defined in \eqref{lawofpi}, which is the same as $X_0^{n}$ if we choose $q = e^{-\gamma/\sqrt{n}}$.
\end{remark}
\begin{proof}
    Similar to the proof of Theorem \ref{thm.uni}, it is sufficient to prove the local limit theorem for $X_0^{n}$, i.e.
    \[
    \lim_{n \to \infty}\sqrt{n}\mathbb P\big(X^n_0=m_n\left(x\right)\big)=\lim_{n \to \infty}\sqrt{n}q_n^{m_n\left(x\right)}\frac{\left(q_n;q_n\right)_\infty}{\left(q_n;q_n\right)_{m_n\left(x\right)}} =e^{-\gamma x}\exp\Big(-\tfrac{1}{\gamma} e^{-\gamma x}\Big).
    \]
    By \eqref{den1}, $\lim_{n \to \infty}\frac{\left(q_n;q_n\right)_\infty}{\left(q_n;q_n\right)_{m_n\left(x\right)}} = \lim_{n \to \infty}\left(q_n^{m_n\left(x\right)};q_n\right)_\infty = \exp\Big(-\tfrac{1}{\gamma} e^{-\gamma x}\Big)$.
    We can write $m_n\left(x\right)=\sqrt{n}x+\tfrac{1}{\gamma}\sqrt{n}\log\left(\sqrt{n}\right)-\varepsilon_n$ with $\varepsilon_n\in[0,1)$. We conclude our proof by
    \[
    \sqrt{n}q_n^{m_n\left(x\right)}=\sqrt{n}\exp\Big(-\gamma x-\tfrac{\gamma}{\sqrt{n}}\tfrac{1}{\gamma}\sqrt{n}log\left(\sqrt{n}\right)+\tfrac{\gamma\varepsilon_n}{\sqrt{n}}\Big)= e^{-\gamma x}\left(1+o\left(1\right)\right)\Big).
    \]
\end{proof}
\begin{proposition}
Let $x \in \mathbb{R}$ and $u \geq 0$ be fixed. For each $n \in \mathbb{N}$, set $m_n\left(x\right)$ $q_n$ under our standard condition \eqref{standardnotation} and $x_n = \cos\left(\frac{u}{\sqrt{n}}\right)$. 
Then, for the continuous big $q$-Hermite polynomials \eqref{qhermite}, the following limit holds:
\begin{equation}
    \lim_{n\to\infty} H_{m_n\left(x\right)}\bigl(x_n\mid q_n\bigr) =\exp\left(\tfrac{1}{\gamma} e^{-\gamma x}\right)2\Re\left[\bigl(\gamma e^{\gamma x}\bigr)^{\tfrac{i u}{\gamma}}\frac{\Gamma\left(2iu/\gamma\right)}{\Gamma\left(iu/\gamma\right)}{}_1F_{1}\left(1 - \tfrac{iu}{\gamma};1-\tfrac{2iu}{\gamma};-\tfrac{1}{\gamma}e^{-\gamma x}\right)\right].\label{qherm1}
\end{equation}
Here ${}_1F_{1}$ denotes the confluent hypergeometric function, defined by the absolutely convergent series
\[
{}_1F_{1}\left(a;c;z\right)
=\sum_{k=0}^{\infty} \frac{\left(a\right)_k}{\left(c\right)_k}\frac{z^k}{k!},
\qquad \left(a\right)_k = a\left(a+1\right)\cdots\left(a+k-1\right).
\]
\end{proposition}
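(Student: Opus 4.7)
The plan is to derive an exact representation of $H_{N}(\cos\theta\mid q)$ from the generating function \eqref{Hgenerating} by a residue expansion, and then take the limit $(q,\theta,N)=(q_{n},u/\sqrt{n},m_{n}(x))$ factor by factor.

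I would first view the generating function
\[
G_{\theta}(t):=\sum_{n\ge0}\frac{H_{n}(\cos\theta\mid q)}{(q;q)_{n}}\,t^{n}=\frac{(t;q)_{\infty}}{(e^{i\theta}t;q)_{\infty}(e^{-i\theta}t;q)_{\infty}}
\]
as a meromorphic function on $\mathbb{C}$, with simple poles at $t_{k}^{\pm}:=e^{\pm i\theta}q^{-k}$ for $k\ge 0$. The Jacobi triple product yields the growth estimate $|(t;q)_{\infty}|=\exp(O((\log|t|)^{2}))$, so $G_{\theta}(t)\,t^{-N-1}$ decays faster than any power along any sequence of expanding circles, and Cauchy's theorem gives
\[
[t^{N}]G_{\theta}(t)=-\sum_{k\ge 0}\bigl(\mathrm{Res}_{t_{k}^{+}}+\mathrm{Res}_{t_{k}^{-}}\bigr)\bigl[G_{\theta}(t)\,t^{-N-1}\bigr].
\]
Each residue is computed using the identity $(aq^{-k};q)_{\infty}=(-a)^{k}q^{-k(k+1)/2}(q/a;q)_{k}(a;q)_{\infty}$, and collecting the $t_{k}^{+}$-sum as a basic hypergeometric ${}_{1}\phi_{1}$ (and noting that the $t_{k}^{-}$-sum is its complex conjugate, since $H_{N}$ has real coefficients) yields
\begin{equation}\label{plan.pp}
    H_{N}(\cos\theta\mid q)=2\,\Re\left[\frac{(q;q)_{N}\,(e^{-i\theta};q)_{\infty}}{(q;q)_{\infty}(e^{-2i\theta};q)_{\infty}}\,e^{iN\theta}\,{}_{1}\phi_{1}\bigl(qe^{i\theta};qe^{2i\theta};q,e^{i\theta}q^{N+1}\bigr)\right].
\end{equation}
A consistency check at $\theta=0$ recovers $H_{N}(1\mid q)=1$, using the $q$-exponential identity $\sum_{k}q^{k(k-1)/2}(-z)^{k}/(q;q)_{k}=(z;q)_{\infty}$.

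With \eqref{plan.pp} in hand, I substitute $q=q_{n}$, $\theta=u/\sqrt{n}$, $N=m_{n}(x)$ and pass to the limit in each factor. The real prefactor $(q_{n};q_{n})_{m_{n}(x)}/(q_{n};q_{n})_{\infty}=1/(q_{n}^{m_{n}(x)+1};q_{n})_{\infty}$ tends to $\exp\bigl(\tfrac{1}{\gamma}e^{-\gamma x}\bigr)$ by \eqref{den1}, giving the outer factor of the target formula. For the complex factor inside $\Re$, I write $e^{-i\theta_{n}}=q_{n}^{iu/\gamma}$ and $e^{-2i\theta_{n}}=q_{n}^{2iu/\gamma}$, apply the $q$-Gamma identity \eqref{qgamma} together with $\Gamma_{q_{n}}\to\Gamma$ from \eqref{qconverge}, and obtain the asymptotic $(e^{-i\theta_{n}};q_{n})_{\infty}/(e^{-2i\theta_{n}};q_{n})_{\infty}\sim(1-q_{n})^{iu/\gamma}\,\Gamma(2iu/\gamma)/\Gamma(iu/\gamma)$. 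Multiplying by $e^{iN\theta_{n}}=e^{iux}(\sqrt{n})^{iu/\gamma}$ and using $(1-q_{n})^{iu/\gamma}\sim(\gamma/\sqrt{n})^{iu/\gamma}$, the two $\sqrt{n}^{\,iu/\gamma}$ factors cancel and leave exactly $(\gamma e^{\gamma x})^{iu/\gamma}\,\Gamma(2iu/\gamma)/\Gamma(iu/\gamma)$. Finally, each summand of the ${}_{1}\phi_{1}$ converges pointwise: the $q\to 1$ asymptotic $(q^{a};q)_{k}\sim(1-q)^{k}(a)_{k}$ together with the compensation $(1-q_{n})^{-k}(e^{i\theta_{n}}q_{n}^{N+1})^{k}\to(e^{-\gamma x}/\gamma)^{k}$ drives the $k$-th term to $\frac{(1-iu/\gamma)_{k}}{(1-2iu/\gamma)_{k}\,k!}(-e^{-\gamma x}/\gamma)^{k}$, whose sum is ${}_{1}F_{1}(1-iu/\gamma;1-2iu/\gamma;-e^{-\gamma x}/\gamma)$.

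The main obstacle is justifying the termwise passage to the limit in the ${}_{1}\phi_{1}$ series uniformly in $n$. The Gaussian damping $q_{n}^{k(k-1)/2}=\exp(-\gamma k(k-1)/(2\sqrt{n}))$ is weak since $1-q_{n}=O(1/\sqrt{n})$ and only helps for $k\gtrsim n^{1/4}$. For the bulk regime $k\ll\sqrt{n}$, I would apply the uniform estimate $|(q_{n}^{a};q_{n})_{k}|\le C(a)(1-q_{n})^{k}(|a|+1)_{k}$ for complex $a$, which dominates each summand by the summable sequence $\frac{|(1-iu/\gamma)_{k}|}{|(1-2iu/\gamma)_{k}|\,k!}(e^{-\gamma x}/\gamma)^{k}$; for the tail $k\gtrsim\sqrt{n}$ the Gaussian damping takes over. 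A separate uniform-in-$n$ lower bound on $|(q_{n}e^{2i\theta_{n}};q_{n})_{k}|$ is needed to control the denominator; here one writes $1-q_{n}^{1+j-2iu/\gamma}=(1-q_{n}^{1+j})+q_{n}^{1+j}(1-e^{2iu/\sqrt{n}})$ and verifies that the modulus is bounded below by a constant times $\min(1,(j+1)/\sqrt{n})$. Dominated convergence then yields ${}_{1}\phi_{1}\to{}_{1}F_{1}$, and assembling the three limits proves \eqref{qherm1}.
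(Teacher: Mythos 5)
Your proposal is correct in substance, but it organizes the argument in a genuinely different order from the paper. The paper starts from the same generating function and Cauchy's formula, but immediately changes variables $t=q_n^z$ to convert the coefficient extraction into a Mellin--Barnes-type integral over a vertical line, rewrites the integrand via $q$-Gamma functions, passes to the limit \emph{under the integral sign} (using the bound \eqref{qgammabound} imported from \cite{bryc2025limits} for domination), and only then evaluates the limiting integral $\mathcal{J}(x,u)$ by shifting the contour and summing Gamma-function residues to produce the ${}_1F_1$. You instead perform the full residue expansion at finite $n$ — obtaining an exact ${}_1\phi_1$ representation of $H_N(\cos\theta\mid q)$ — and then take the limit term by term; in effect you commute the residue evaluation with the $n\to\infty$ limit. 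Your route has the advantage of avoiding the contour shift of the limiting Mellin--Barnes integral (whose vanishing remainder the paper does not justify in detail) and of not needing the vertical-line $q$-Gamma estimates from \cite{bryc2025limits}; its cost is that the dominated-convergence step now lives on the ${}_1\phi_1$ series, where you must control ratios of partial $q$-Pochhammer products uniformly in $n$. You correctly identify this as the main obstacle, and your two-regime strategy (bulk $k\lesssim\sqrt{n}$ via $|1-q_n^{1+j}e^{i\alpha/\sqrt{n}}|\asymp (j+1+O(1))/\sqrt{n}$, tail via the Gaussian factor $q_n^{k(k-1)/2}$) does close it; note that the paper itself carries out essentially your residue expansion and bulk estimates in the later lemma bounding $|H_{m_n(z)}(x\mid q_n)|$ uniformly, so the technical ingredients you need are already validated there. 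Two small points to watch: your exact formula \eqref{plan.pp} degenerates at $\theta=0$ (the conjugate pole families merge into double poles), so the consistency check $H_N(1\mid q)=1$ and the case $u=0$ require a limiting interpretation; and in the tail regime the lower bound on the denominator Pochhammer products degrades to $(q_n;q_n)_\infty\approx e^{-\pi^2\sqrt{n}/(6\gamma)}$, which is still beaten by $(q_n^{N+1})^k\le (C/\sqrt{n})^{\epsilon\sqrt{n}}$ but should be stated explicitly.
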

\begin{proof}
    By using generating function of $H\left(x|q\right)$ \eqref{Hgenerating} and Cauchy’s theorem with contour $|t|=q_n <1$, we have
    \[
    \frac{H_{m_n\left(x\right)}\left(\cos \frac{u}{\sqrt{n}} \mid q_n\right)}{\left(q_n;q_n\right)_{m_n\left(x\right)}} = \frac{1}{2\pi i} \oint_{|t|=q_n} \frac{\left(t;q_n\right)_\infty}{\left(q^{iu/\gamma+z};q\right)_\infty \left(q^{-iu/\gamma+z};q_n\right)_\infty} \frac{dt}{t^{m_n\left(x\right)+1}}.
    \]

    With the change of variables
    \[
    t = q^z_n, \qquad dt = q^z \ln\left(q\right)\; dz, \qquad z \in [1-i\frac{\pi\sqrt{n}}{\gamma},1+i\frac{\pi\sqrt{n}}{\gamma}),
    \]
    we have

    \[
    H_{m_n\left(x\right)}\left(\cos \frac{u}{\sqrt{n}} \mid q_n\right) = \left(q_n;q_n\right)_{m_n\left(x\right)} \frac{1}{2\pi i} \frac{\gamma}{\sqrt{n}}\int_{1-i\frac{\pi\sqrt{n}}{\gamma}}^{1+i\frac{\pi\sqrt{n}}{\gamma}} \frac{\left(q^z_n;q_n\right)_\infty}{\left(q_n^{iu/\gamma+z};q_n\right)_\infty \left(q_n^{-iu/\gamma+z};q_n\right)_\infty} q_n^{-m_{n}\left(x\right)z}\; dz.
    \]
    Using $m_n\left(z\right) = \lfloor x\sqrt{n} + \tfrac{1}{\gamma}\sqrt{n}\log\sqrt{n}\rfloor$ and the definition of the q-Gamma function \eqref{qgamma}, we rewrite $H_{m_n}$ as
    \[
    H_{m_n\left(x\right)}\left(x_n\mid q_n\right) = \frac{\left(q_n;q_n\right)_{m_n\left(x\right)}}{\left(q_n;q_n\right)_{\infty}}\frac{1}{2\pi i}\int_{1-i\infty}^{1+i\infty} F_n\left(z\right)\; dz,
    \]
    with
    \[
    F_n\left(z\right) =\mathbf{1}_{\{\Re z = 1, |\Im z|\le \tfrac{\pi \sqrt{n}}{\gamma}\}} \frac{\Gamma_{q_n}\Big(z+\tfrac{i u}{\gamma}\Big) \Gamma_{q_n}\Big(z-\tfrac{i u}{\gamma}\Big)}{\Gamma_{q_n}\left(z\right)} \frac{e^{\gamma zx}}{\sqrt{n}^{1-z}\left(1-q_n\right)^{1-z}}.
    \]
    The prefactor $\frac{\left(q_n;q_n\right)_{m_n\left(x\right)}}{\left(q_n;q_n\right)_{\infty}}$ converges to $\exp\left(\frac{1}{\gamma}e^{-\gamma x}\right)$ due to \eqref{den1}. The pointwise convergence of $F_n\left(z\right)$ is immediate  due to \eqref{qconverge} and the definition of $q_n$. We have
    \[
    \lim_{n\to \infty}F_n\left(z\right) = \mathbf{1}_{\{\Re z = 1\}}\left(\gamma e^{\gamma x}\right)^{z}\frac{\Gamma\Big(z+\tfrac{i u}{\gamma}\Big) \Gamma\Big(z-\tfrac{i u}{\gamma}\Big)}{\Gamma\left(z\right)}.
    \]
    To bound $F_n\left(z\right)$ uniformly, we prove that there exists $N$ such that for any $\sqrt{n} > N$, $\Big|\frac{\Gamma_{q_n}\bigl(1+i\left(s+\tfrac{u}{\gamma}\right)\bigr) \Gamma_{q_n}\bigl(1+i\left(s-\tfrac{u}{\gamma}\right)\bigr)}{\Gamma_{q_n}\left(1+is\right)}\Big|$ has exponential decay with respect to $s$. It is sufficient to prove it for $\gamma = 2$, since the general $\gamma$ case follows by taking $N^{\left(\gamma\right)} = \frac{\gamma}{2} N^{\left(2\right)}$.
    From \cite[(5.17)]{bryc2025limits}, there exists a $N^*$ such that for any $N>N^*$, $q = e^{-2/\sqrt{n}}$ for all $x \in [-\frac{\pi \sqrt{n}}{2}, \frac{\pi \sqrt{n}}{2}]$.
    \begin{equation}
    c_1 e^{x^{2}/\sqrt{n}}\frac{\pi |x|}{|\sinh\left(\pi x\right)|}<|\Gamma_{q}\left(1+ix\right)|^{2}< c_2 e^{x^{2}/\sqrt{n}}\frac{\pi |x|}{|\sinh\left(\pi x\right)|}.\label{qgammabound}
    \end{equation}
    We have the following exponential decay with respect to $s$:
    \begin{equation}
    \Big|\frac{\Gamma_{q_n}\bigl(1+i\left(s+\tfrac{u}{2}\right)\bigr) \Gamma_{q_n}\bigl(1+i\left(s-\tfrac{u}{2}\right)\bigr)}{\Gamma_{q_n}\left(1+is\right)}\Big| \leq C\left(u\right) e^{s^{2}/2M}\sqrt{\frac{\pi |s|}{|\sinh\left(\pi s\right)|}} \leq C\left(u\right)e^{-\frac{3|s|}{20}}.\label{expdecay}
    \end{equation}
    The last inequality is due to $x \in [-\frac{\pi \sqrt{n}}{2}, \frac{\pi \sqrt{n}}{2}]$, $\frac{x^2}{2\sqrt{n}} < \frac{\pi|x|}{4}$ and the elementary bound $\frac{|x|}{|\sinh\left(x\right)|} \leq C e^{-4|x|/5}$.
    
    Equation \eqref{expdecay} allows us to apply the dominated convergence theorem to $F_n\left(z\right)$. We obtain
    \begin{equation}
        \lim_{n\to\infty} \exp\Big(-\tfrac{1}{\gamma} e^{-\gamma x}\Big)H_{m_n}\bigl(x_n\mid q_n\bigr)= \tfrac{1}{2\pi i}\int_{1-i\infty}^{1+i\infty} \left(\gamma e^{\gamma x}\right)^{z}\frac{\Gamma\Big(z+\tfrac{i u}{\gamma }\Big) \Gamma\Big(z-\tfrac{i u}{\gamma }\Big)}{\Gamma\left(z\right)} \; dz := \mathcal{J}\left(x,u\right). 
    \end{equation}
    
    It remains to prove that
    \[
    \mathcal{J}\left(x,u\right) = 2\Re\left[\bigl(\gamma e^{\gamma x}\bigr)^{\tfrac{i u}{\gamma}}\frac{\Gamma\left(2iu/\gamma\right)}{\Gamma\left(iu/\gamma\right)}{}_1F_{1}\left(1 - \tfrac{iu}{\gamma};1 - \tfrac{2iu}{\gamma};-\tfrac{1}{\gamma}e^{-\gamma x}\right)\right].
    \]
    Define \(\eta:= \left(\gamma e^{\gamma x}\right)^{-1}=\frac{1}{\gamma}e^{-\gamma x}\) so that \(\left(\gamma e^{\gamma x}\right)^{z}=\eta^{-z}\) and \(\eta>0\) for real \(x\). We shift the contour leftwards. The poles crossed are exactly those of \(\Gamma\left(z\pm iu/\gamma\right)\), namely
    \[
    z=-\tfrac{iu}{\gamma}-n \quad\text{and}\quad z=+\tfrac{iu}{\gamma}-n, \qquad n=0,1,2,\dots.
    \]
By Cauchy's residue theorem, \(\mathcal{J}\left(x,u\right)\) equals the sum of the residues at these poles. For \(z=-\tfrac{iu}{\gamma}-n\), we have
\[
\operatorname{Res}_{z=-\frac{iu}{\gamma}-n} 
\frac{\Gamma\left(z+\tfrac{iu}{\gamma}\right)\Gamma\left(z-\tfrac{iu}{\gamma}\right)}{\Gamma\left(z\right)}\eta^{-z}
=
\frac{\left(-1\right)^n}{n!}
\frac{\Gamma\left(-\tfrac{2iu}{\gamma}-n\right)}{\Gamma\left(-\tfrac{iu}{\gamma}-n\right)}
\eta^{\frac{iu}{\gamma}+n}.
\]
Using \(\Gamma\left(\alpha-n\right)=\Gamma\left(\alpha\right)\left(-1\right)^n/\left(1-\alpha\right)_n\) yields
\[
\sum_{n=0}^{\infty}\frac{\left(-1\right)^n}{n!}
\frac{\Gamma\left(-\tfrac{2iu}{\gamma}-n\right)}{\Gamma\left(-\tfrac{iu}{\gamma}-n\right)}
\eta^{\frac{iu}{\gamma}+n}= \eta^{\frac{iu}{\gamma}}
\frac{\Gamma\left(-\tfrac{2iu}{\gamma}\right)}{\Gamma\left(-\tfrac{iu}{\gamma}\right)}
{}_1F_{1}\left(1+\tfrac{iu}{\gamma};1+\tfrac{2iu}{\gamma};-\eta\right).
\]
Similarly, for \(z=\tfrac{iu}{\gamma}-n\),
\[
\sum_{n=0}^{\infty}\frac{\left(-1\right)^n}{n!}
\frac{\Gamma\left(\tfrac{2iu}{\gamma}-n\right)}{\Gamma\left(\tfrac{iu}{\gamma}-n\right)}
\eta^{-\frac{iu}{\gamma}+n}=\eta^{-\frac{iu}{\gamma}}
\frac{\Gamma\left(\tfrac{2iu}{\gamma}\right)}{\Gamma\left(\tfrac{iu}{\gamma}\right)}
{}_1F_{1}\left(1-\tfrac{iu}{\gamma};1-\tfrac{2iu}{\gamma};-\eta\right).
\]
Adding both residue families gives
\begin{align*}
\mathcal{J}\left(x,u\right)
&=
\eta^{\frac{iu}{\gamma}}
\frac{\Gamma\left(-\tfrac{2iu}{\gamma}\right)}{\Gamma\left(-\tfrac{iu}{\gamma}\right)}
{}_1F_{1}\left(1+\tfrac{iu}{\gamma};1+\tfrac{2iu}{\gamma};-\eta\right)
+
\eta^{-\frac{iu}{\gamma}}
\frac{\Gamma\left(\tfrac{2iu}{\gamma}\right)}{\Gamma\left(\tfrac{iu}{\gamma}\right)}
{}_1F_{1}\left(1-\tfrac{iu}{\gamma};1-\tfrac{2iu}{\gamma};-\eta\right).
\end{align*}
For real \(x,u\) the two terms are complex conjugates, hence an equivalent real form is
\[
\mathcal{J}\left(x,u\right)=2\Re\left[
\eta^{-\frac{iu}{\gamma}}
\frac{\Gamma\left(2iu/\gamma\right)}{\Gamma\left(iu/\gamma\right)}
{}_1F_{1}\left(1-\tfrac{iu}{\gamma};1-\tfrac{2iu}{\gamma};-\eta\right)
\right].
\]
Since \(\eta^{-iu/\gamma}=\left(\gamma e^{\gamma x}\right)^{iu/\gamma}\) and
\(\eta=\frac{1}{\gamma}e^{-\gamma x}\), this is exactly
\[
\mathcal{J}\left(x,u\right) = 2\Re\left[\bigl(\gamma e^{\gamma x}\bigr)^{\tfrac{i u}{\gamma}}\frac{\Gamma\left(2iu/\gamma\right)}{\Gamma\left(iu/\gamma\right)}{}_1F_{1}\left(1 - \tfrac{iu}{\gamma};1 - \tfrac{2iu}{\gamma};-\tfrac{1}{\gamma}e^{-\gamma x}\right)\right],
\]
as claimed.
\end{proof}
\begin{proposition}
    Let $q_n=e^{-\gamma/\sqrt{n}}$. We have
    \begin{equation}
        \lim_{n \to \infty}\left(\frac{Q_1 - \lfloor\tfrac12\sqrt{n}\log\sqrt{n}\big\rfloor}{\sqrt{n}},\frac{Q_{n+1} - \lfloor\tfrac12\sqrt{n}\log\sqrt{n}\big\rfloor}{\sqrt{n}} \right) \overset{d}{=} \left(X^\gamma,Y^\gamma\right), \label{weaklyjoint}
    \end{equation}
    where the random vector $\left(X^\gamma,Y^\gamma\right)$ has density \eqref{jointdensity}.
\end{proposition}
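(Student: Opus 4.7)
The approach is to establish the statement via a local limit theorem for the joint law of $(Q_1, Q_{n+1})$. Setting
\begin{equation*}
p_n(x,y) \;=\; n \cdot \mathbb{P}\bigl(Q_1 = m_n(x),\; Q_{n+1} = m_n(y)\bigr),
\end{equation*}
(the factor $n$ supplies the correct normalization for a two-dimensional density at scale $\sqrt{n}$), I would combine the initial distribution $\pi_{k}^{(q_n)}$ with the Karlin--McGregor representation \eqref{Htrans} to write
\begin{equation*}
p_n(x,y) \;=\; n\,\pi_{m_n(x)}^{(q_n)}\, \frac{1}{(q_n;q_n)_{m_n(y)}} \int_{-1}^{1}(1-2c+2cz)^n\, H_{m_n(x)}(z\mid q_n)\, H_{m_n(y)}(z\mid q_n)\, w_{q_n}(z)\,dz,
\end{equation*}
and localize the integral near the edge $z=1$ by the substitution $z = \cos(w/\sqrt{n})$, under which $(1-2c+2c\cos(w/\sqrt n))^n \to e^{-cw^2}$.

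Next I would assemble the four pointwise asymptotics. From \eqref{x0weakly}, $\sqrt{n}\,\pi_{m_n(x)}^{(q_n)} \to e^{-\gamma x}\exp\bigl(-\tfrac{1}{\gamma}e^{-\gamma x}\bigr)$; from \eqref{den1}, $(q_n;q_n)_\infty/(q_n;q_n)_{m_n(y)} \to \exp\bigl(-\tfrac{1}{\gamma}e^{-\gamma y}\bigr)$; and from \eqref{qherm1}, the two Hermite factors converge to $\exp\bigl(\tfrac{1}{\gamma}e^{-\gamma x}\bigr)\mathcal{J}(x,w)$ and $\exp\bigl(\tfrac{1}{\gamma}e^{-\gamma y}\bigr)\mathcal{J}(y,w)$, respectively. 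The new ingredient is the weight together with the Jacobian: using the identity $|(q^{z};q)_\infty|^{2}=(1-q)^{2}(q;q)_\infty^{2}/|\Gamma_{q}(z)|^{2}$ for purely imaginary $z$, the definition \eqref{refdef1} of $w_{q_n}$, and the relation $e^{i\theta}=q_n^{-iw/\gamma}$ for $\theta=w/\sqrt{n}$, one obtains
\begin{equation*}
w_{q_n}\!\bigl(\cos(w/\sqrt{n})\bigr)\cdot\tfrac{\sin(w/\sqrt{n})}{\sqrt{n}}
\;\sim\;
\frac{(q_n;q_n)_\infty}{2\pi\sqrt{n}}\,
\frac{|\Gamma_{q_n}(iw/\gamma)|^{2}}{|\Gamma_{q_n}(2iw/\gamma)|^{2}}
\;\longrightarrow\;
\frac{(q_n;q_n)_\infty}{2\pi\sqrt{n}}\,
\frac{|\Gamma(iw/\gamma)|^{2}}{|\Gamma(2iw/\gamma)|^{2}}.
\end{equation*}
Multiplying out, the factor $(q_n;q_n)_\infty$ pairs with $1/(q_n;q_n)_{m_n(y)}$ to produce $\exp(-\tfrac{1}{\gamma}e^{-\gamma y})$, all four exponentials $\exp(\pm \tfrac{1}{\gamma}e^{-\gamma x})$ and $\exp(\pm \tfrac{1}{\gamma}e^{-\gamma y})$ cancel, the factors of $\sqrt{n}$ cancel against the prefactor $n$, and the leftover $e^{-\gamma x}$ together with the integral exactly reproduces $f_{X,Y}^{\gamma}(x,y)$ from \eqref{jointdensity}.

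To upgrade pointwise convergence of the integrand to convergence of the integral I would invoke dominated convergence. The Gaussian factor $e^{-cw^2}$ supplies decay in $w$; the ratio $|\Gamma_{q_n}(iw/\gamma)|^{2}/|\Gamma_{q_n}(2iw/\gamma)|^{2}$ is controlled through the $q$-Gamma estimate \eqref{qgammabound} already used in the previous section; and on any compact subset of $(-1,1]$ the Hermite polynomials satisfy \eqref{Hunibound}. The contribution from $z$ bounded away from $1$---equivalently, $w$ of order $\sqrt{n}$---must be shown negligible: on a region $z \in[-1+\delta, 1-\delta]$ this follows from \eqref{Hunibound} together with the fact that $(1-2c+2cz)^n$ is exponentially small there; in the neighborhood of $z=-1$ one must use the generating-function decomposition underlying the proof of \eqref{Hunibound} to extract polynomial-in-$n$ control on $H_{m_n(\cdot)}$, which is then annihilated by $(1-4c+\varepsilon)^n$. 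Finally, convergence of the two-dimensional probability mass functions to the density $f^\gamma_{X,Y}$ yields the joint weak convergence \eqref{weaklyjoint} by the same Slutsky-type argument referenced in the footnote in the proof of Lemma \ref{lem.local0}, applied componentwise with a uniform auxiliary pair.

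The main obstacle is obtaining a uniform-in-$n$ bound on $H_{m_n(x)}(\cdot\mid q_n)$ valid across $z\in[-1,1]$: the bound \eqref{Hunibound} is stated for fixed $q$ and for $\theta$ away from $\pi$, whereas here both $q=q_n$ and the degree $m_n(x)\sim \tfrac{1}{\gamma}\sqrt{n}\log\sqrt{n}$ grow with $n$. I expect the correct remedy is to redo the singularity analysis of the generating function \eqref{Hgenerating} with $q=q_n$ while carefully tracking the $n$-dependence of the residue constants $A_{\pm}$ and the analytic remainder, and to combine the resulting subexponential growth bound with the strong exponential smallness of $(1-2c+2cz)^n$ near $z=-1$.
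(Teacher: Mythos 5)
Your proposal is correct and takes essentially the same route as the paper: a local limit theorem for the joint law (the paper phrases it as the conditional law of $Q_{n+1}$ given $Q_1=m_n(x)$ times the marginal limit of $Q_1$, which is the same factorization you use) via the Karlin--McGregor representation \eqref{Htrans}, the substitution $z=\cos(w/\sqrt{n})$, the pointwise inputs \eqref{x0weakly}, \eqref{den1}, \eqref{qherm1} and the $q$-Gamma rewriting of the weight, followed by dominated convergence. The obstacle you flag --- a uniform-in-$n$ bound on $H_{m_n(\cdot)}(\cdot\mid q_n)$ under the double scaling --- is handled in the paper exactly as you propose, by a residue analysis of the generating function \eqref{Hgenerating} with the $n$-dependent $q_n$ and degree (giving a uniform bound for $\theta\in[0,\pi/2]$), while near $z=-1$ no refined control is needed: the crude bound \eqref{bound} gives $|H_{m_n}|\le 4^{m_n}=e^{O(\sqrt{n}\log n)}$, which is already annihilated by $(1-2c)^{n}$.
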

\begin{proof}
    Since the distribution of $Q_1$ in the limit is proved in \eqref{x0weakly}, we only have to prove the conditionally law of $Q_{n+1}$ under $Q_1$. For the same reason as in the proof Theorem \ref{thm.uni}, it is sufficient to prove
    \[
    \lim_{n \to \infty}\sqrt{n}\mathbb{P}\left(Q_{n+1} = m_n\left(y\right)\big| Q_1 = m_n\left(x\right)\right)\ = \frac{1}{2\pi}\exp\Big(\tfrac{1}{\gamma} e^{-\gamma x}\Big)\int_{0}^{\infty} e^{-c u^2}\frac{|\Gamma\left(iu/\gamma\right)|^2}{|\Gamma\left(2iu/\gamma\right)|^2}\mathcal{J}\left(x,u\right)\mathcal{J}\left(y,u\right)du,
    \]
    where $m_n\left(z\right)=\big\lfloor z\sqrt{n}+\tfrac{1}{\gamma}\sqrt{n}\log\sqrt{n}\big\rfloor$.
    We apply \eqref{Htrans} to $i = m_n\left(x\right)$, $j = m_n\left(y\right)$,
    \[
    \sqrt{n}\mathbb{P}\left(Q_{n+1} = m_n\left(y\right)| Q_1 = m_n\left(x\right)\right) = \frac{\sqrt{n}}{\left(q;q\right)_{m_n\left(y\right)}}\int_{-1}^{1} \left(2cz + 1 - 2c\right)^n H_{m_n\left(x\right)}\left(z\right)H_{m_n\left(y\right)}\left(z\right) w\left(z\right) \; dz.
    \]
    For exactly the same reason as the proof of Theorem \ref{thm.uni}, the integral 
    \[
    \int_{-1}^{1} \left(2cz + 1 - 2c\right)^n H_{m_n\left(x\right)}\left(z\right)H_{m_n\left(y\right)}\left(z\right) w\left(z\right) \; dz
    \]
    is negligible on the interval $\left[-1,0\right]$. We perform a change of variable $z = \cos\left(u/\sqrt{n}\right)$. We have
    \[
    \sqrt{n}\mathbb{P}\left(Q_{n+1} = m_n\left(y\right)| Q_1 = m_n\left(x\right)\right)= \int_{0}^{\frac{\pi}{2}\sqrt{n}} G_n\left(u\right)du,
    \]
    where the integrand $G_n\left(u\right)$ can be written as
    \[
    G_n\left(u\right):=\Bigl(1-2c\left(1-\cos\tfrac{u}{\sqrt n}\right)\Bigr)^{n}H_{m_n\left(x\right)}\Bigl(\cos\tfrac{u}{\sqrt{n}}\Bigr)H_{m_n\left(y\right)}\Bigl(\cos\tfrac{u}{\sqrt{n}}\Bigr)\frac{\sin\left(u/\sqrt n\right)w\left(\cos\left(u/\sqrt n\right)\right)}{\left(q_n;q_n\right)_{m_n\left(y\right)}}.
    \]
    
    We first prove pointwise limit of $G_n$. From the elementary limit and \eqref{qherm1}, for $u > 0$, we have
    \[
    \Bigl(1-2c\left(1-\cos\tfrac{u}{\sqrt n}\right)\Bigr)^n\to e^{-c u^2}, \qquad H_{m_n\left(z\right)}\Bigl(\cos\tfrac{u}{\sqrt{n}}\Bigr) \to \exp\Big(\tfrac{1}{\gamma} e^{-\gamma z}\Big)\mathcal{J}\left(z,u\right)\qquad z\in\{x,y\}.
    \]
    Using \eqref{refdef1}, we have the exact identity
    \begin{equation}
    w\left(\cos\frac{u}{\sqrt{n}}\right) \sin\left(\tfrac{u}{\sqrt{n}}\right) =\frac{\left(q_n;q_n\right)_\infty}{2\pi} \frac{\bigl|\left(q_n^{2iu/\gamma};q_n\right)_\infty\bigr|^{2}}{\bigl|\left(q_n^{iu/\gamma};q_n\right)_\infty\bigr|^{2}}.
    \end{equation}
By \eqref{den1},
\[
\frac{\left(q_n;q_n\right)_\infty}{\left(q_n;q_n\right)_{m_n\left(y\right)}}=\left(q_n^{m_n\left(y\right)+1};q_n\right)_\infty
\to \exp\left(-\tfrac{1}{\gamma} e^{-\gamma y}\right).
\]
By the $q$--Gamma identity \eqref{qgamma}, we obtain
\[
\frac{\left(q_n^{2iu/\gamma};q_n\right)_\infty}{\left(q_n^{iu/\gamma};q_n\right)_\infty}
=\frac{\Gamma_{q_n}\left(iu/\gamma\right)}{\Gamma_{q_n}\left(2iu/\gamma\right)}\left(1-q_n\right)^{-iu/\gamma}.
\]
Taking absolute values, the $\left(1-q_n\right)^{-iu/2}$ factor drops out, so
\[
\frac{|\left(q_n^{iu};q_n\right)_\infty|}{|\left(q_n^{iu/2};q_n\right)_\infty|}
=\frac{|\Gamma_{q_n}\left(iu/\gamma\right)|}{|\Gamma_{q_n}\left(2iu/\gamma\right)|}.
\]
As $q_n\to 1$, the $q$-Gamma function converges to the classical Gamma function \eqref{qconverge}. Therefore,
\[
\lim_{n\to\infty}\frac{|\left(q_n^{iu};q_n\right)_\infty|^2}{|\left(q_n^{iu/2};q_n\right)_\infty|^2}
=\frac{|\Gamma\left(iu/\gamma\right)|^2}{|\Gamma\left(2iu/\gamma\right)|^2}.
\]
Thus, we obtain the following pointwise convergence of $G_n$:
\begin{align*}
    \lim_{n \to \infty}G_n &= \frac{1}{2\pi} \int_{0}^{\infty} e^{-c u^2}\exp\left(-\tfrac{1}{\gamma} e^{-\gamma y}\right) \frac{|\Gamma\left(iu/\gamma\right)|^2}{|\Gamma\left(2iu/\gamma\right)|^2} \exp\left(\tfrac{1}{\gamma} e^{-\gamma x}\right)\mathcal{J}\left(x,u\right) \exp\left(\tfrac{1}{\gamma} e^{-\gamma y}\right)\mathcal{J}\left(y,u\right)du\\
    &= = \frac{1}{2\pi} \exp\left(\tfrac{1}{\gamma} e^{-\gamma x}\right) \int_{0}^{\infty} e^{-c u^2} \frac{|\Gamma\left(iu/\gamma\right)|^2}{|\Gamma\left(2iu/\gamma\right)|^2} \mathcal{J}\left(x,u\right) \mathcal{J}\left(y,u\right)du.
\end{align*}

    Now, we turn to the bound of $G_n$. Clearly, for $n$ sufficiently large
    \[
    \Bigl(1-2c\left(1-\cos\tfrac{u}{\sqrt n}\right)\Bigr)^n < Ce^{-c u^2}.
    \]
    We need the following lemma for the upper bound of orthogonal polynomial terms.
    \begin{lemma}
    Fix $z\in\mathbb{R}$. Set $q_n$ and $m_n\left(z\right)$ under standard condition \eqref{standardnotation} and
    \[
    x=\cos\theta \qquad \theta \in [0,\frac{\pi}{2}].
    \]
    Then there exists a constant $A$ such that, for $n\ge1$,
    \[
    \bigl|H_{m_n\left(z\right)}\left(x\mid q_n\right)\bigr|\ \le\ A.
    \]
\end{lemma}

\begin{proof}
From the generating function \eqref{Hgenerating} and  Cauchy's Integral Formula, we express the normalized polynomial as a contour integral
\begin{equation}
    \frac{H_{m_n}(x \mid q_n)}{(q_n;q_n)_{m_n}} = \frac{1}{2\pi i} \oint_{\mathcal{C}} \frac{G_{\theta}(t;q_n)}{t^{m_n+1}} \, dt,
\end{equation}
where $\mathcal{C}$ is a small circle around the origin inside the unit disk and
\[
G_\theta\left(t;q\right):=\frac{\left(t;q\right)_\infty}{\left(e^{i\theta}t;q\right)_\infty\left(e^{-i\theta}t;q\right)_\infty}=\prod_{k=0}^{\infty}\frac{1-tq^k}{\left(1-e^{i\theta}tq^k\right)\left(1-e^{-i\theta}tq^k\right)}.
\]
The function $G_\theta\left(t;q\right)$ is meromorphic with simple poles at $t_{k,\pm} = q_n^{-k}e^{\pm i\theta}$ for $k \in \mathbb{N}_0$. We deform the contour $\mathcal{C}$ to infinity, which allows us to evaluate the integral as the negative sum of the residues at these poles. Specifically,
\[
\left|H_{m_n}(x \mid q_n)\right| \le (q_n;q_n)_{m_n} \sum_{i = 0}^{\infty}\left( \left|\text{Res}(t_{k,+})\right| + \left|\text{Res}(t_{k,-})\right|\right).
\]
Due to the symmetry of the complex conjugate poles, we focus on the summation over $t_{k,+}$, noting that computations for $t_{k,-}$ terms proceed identically. Utilizing the identity $(a;q)_\infty = (a;q)_k (1-aq^k) (aq^{k+1};q)_\infty$, we find that at $t = t_{k,+}$, the residue satisfies
\begin{equation}
    \text{Res}(t_{k,+}) = C_k  \frac{(e^{i \theta};q_n)_\infty}{(e^{2i \theta};q_n)_\infty(q_n;q_n)_\infty}t_{k,+}^{-m_n + 1},
\end{equation}
where 
\[
C_k := \frac{(e^{i \theta}q_n^{-k};q_n)_{k+1}}{(e^{2i \theta}q_n^{-k};q_n)_{k+1}(q_n^{-k};q_n)_{k}} = \frac{(e^{i \theta};q_n)_{k+1}}{(e^{2i \theta};q_n)_{k+1}} q_n^{k(k+1)/2}\frac{1}{(q_n;q_n)_k}.
\]
Since $\theta \in [0,\pi/2]$, we have the elementary bound for any $q \in [0,1)$,
\[
\left|\frac{1 - e^{i \theta}q^j}{1 - e^{2i \theta}q^j}\right| \leq 1, \quad \text{hence} \qquad \left|\frac{(e^{i \theta};q)_\infty}{(e^{2i \theta};q)_\infty}\right| \leq 1, \qquad \left|\frac{(e^{i \theta};q)_{k+1}}{(e^{2i \theta};q)_{k+1}} q^{k(k+1)/2}\right|\leq 1.
\]
Thus, 
\[
\left| \text{Res}(t_{k,+})\right| \leq \frac{1}{(q_n;q_n)_k} t_{k,+}^{-m_n} \frac{1}{(q_n;q_n)_\infty}.
\]
We now substitute the scaling limits for $q_n$ and $m_n$ as $n \to \infty$. We have $q_n = e^{-\gamma/\sqrt{n}}$ and $m_n = \lfloor z\sqrt{n} + \frac{1}{\gamma}\sqrt{n}\log \sqrt{n} \rfloor$. First, we estimate the residue denominator $(q_n; q_n)_k$. As $q_n \to 1$, we have $(1 - q_n^j) \sim j\gamma/\sqrt{n}$. Thus,
\begin{equation}
    \frac{1}{(q_n; q_n)_k} \sim \frac{1}{\prod_{j=1}^k (j\gamma/\sqrt{n})} = \frac{n^{k/2}}{k! \gamma^k}.
\end{equation}
Second, we estimate the factor $t_{k,+}^{-m_n}$. Substituting the expressions for $q_n$ and $m_n$, we obtain
\[
|t_{k,+}^{-m_n}| \approx e^{-k\gamma z} n^{-k/2}.
\]
Finally, recovering $H_{m_n}$, we note that the prefactor $(q_n; q_n)_{m_n} / (q_n; q_n)_\infty$ converges to a finite constant due to \eqref{den1}. We have 
\begin{equation}
    |H_{m_n}(x \mid q_n)| \le C \sum_{k=0}^\infty \frac{1}{k!} \left( \frac{e^{-\gamma z}}{\gamma} \right)^k < \infty.
\end{equation}
Since the series converges, the sequence is bounded uniformly in $n$.
\end{proof}

From \cite[(5.24),(5.25)]{bryc2025limits}, there exists $A,B,N^{\left(2\right)}$, such that for all $n>N^{\left(2\right)}$, the following bound holds for $\gamma = 2$
\[
\mathbf{1}_{\{|u|\le \tfrac{\pi \sqrt{n}}{\gamma}\}}\frac{|\Gamma_{q_n}\left(iu/\gamma\right)|}{|\Gamma_{q_n}\left(2iu/\gamma\right)|} < A u e^{Bu}.
\]
In our case the same bound follows from taking $N^{\left(\gamma\right)} = \frac{\gamma}{2} N^{\left(2\right)}$. Combining all the previous result, we showed that $G_n$ can be bounded above by a integrable function on $[0,\infty)$. We conclude by using the dominated convergence theorem. 
\end{proof}

\begin{theorem}\label{weakly}
    Let the speed of particle $0$ be $u$, and define \(c = \frac{1-u^{2}}{4}, q_{n} = e^{-\tfrac{\gamma}{\sqrt{n}}}.\)
    Then
    \[
    \lim_{n \to \infty}\mathbb{E}\left[\frac{\# \mathcal{C}_0^{n,\left(q_n\right)}}{\sqrt{n}}\middle|
    U_0 = u\right]= \mathbb{E}\left[Y^\gamma - X^\gamma\right].
    \]
\end{theorem}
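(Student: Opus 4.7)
The plan is to combine Martin's construction, the coupling identity, and the joint weak convergence already established, and then upgrade weak convergence to convergence in mean via uniform integrability.

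\emph{Step 1 (reduction to the queue).} Martin's characterization (Theorem \ref{construction}) gives $\#\mathcal{C}_0^{n,(q_n)} \overset{d}{=} \mathcal{U}_n^{(q_n)}$ under the conditional law of $U_0 = u$, and the coupling identity \eqref{Ecoupling} yields
\[
\mathbb{E}\!\left[\frac{\#\mathcal{C}_0^{n,(q_n)}}{\sqrt{n}}\,\bigg|\,U_0 = u\right] = \mathbb{E}_x\!\left[\frac{Q_{n+1}^{q_n} - Q_1^{q_n}}{\sqrt{n}}\right],
\]
with $x=(1+u)/2$ and $c=x(1-x)=(1-u^2)/4$. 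Writing $m_n := \lfloor(1/\gamma)\sqrt{n}\log\sqrt{n}\rfloor$, the centering cancels in the difference, and the joint convergence \eqref{weaklyjoint} combined with the continuous mapping theorem gives
\[
\frac{Q_{n+1}^{q_n} - Q_1^{q_n}}{\sqrt{n}} \;\xrightarrow[n\to\infty]{d}\; Y^\gamma - X^\gamma.
\]

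\emph{Step 2 (uniform integrability).} I would establish the uniform $L^2$ bounds
\[
\sup_n \mathbb{E}_x\!\left[\!\left(\frac{Q_1^{q_n}-m_n}{\sqrt{n}}\right)^{\!2}\right] < \infty, \qquad \sup_n \mathbb{E}_x\!\left[\!\left(\frac{Q_{n+1}^{q_n}-m_n}{\sqrt{n}}\right)^{\!2}\right] < \infty,
\]
so that $(Q_{n+1}^{q_n}-Q_1^{q_n})/\sqrt{n}$ is uniformly integrable and Vitali's theorem concludes the proof. The bound on $Q_1^{q_n}$ is straightforward: the explicit formula $\pi_k^{q_n}=(q_n^{k+1};q_n)_\infty q_n^k$, the substitution $k = m_n + \lfloor z\sqrt{n}\rfloor$, and the asymptotic estimate \eqref{den1} turn the second-moment sum into a Riemann sum converging, by dominated convergence, to $\mathbb{E}[(X^\gamma)^2]$. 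The harder bound is for $Q_{n+1}^{q_n}$: using the orthogonal polynomial representation \eqref{Htrans} and the change of variables $z=\cos(u/\sqrt{n})$ from the proof of \eqref{weaklyjoint}, the dominating envelope constructed there (combining $e^{-cu^2}$, the uniform big $q$-Hermite bound, and the $q$-Gamma decay \eqref{expdecay}) should remain integrable when multiplied by the extra polynomial factor $((j-m_n)/\sqrt{n})^2$ arising from the second moment.

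\emph{Main obstacle.} The technical heart of the argument is this second-moment control of $Q_{n+1}^{q_n}$. While the pointwise limit identification uses the same contour and residue calculations as in the preceding proposition, promoting this to a second-moment estimate requires uniform tail control of the $j$-sum beyond the bulk regime $|j-m_n| = O(\sqrt{n})$, which in turn demands a growth estimate on the continuous big $q$-Hermite polynomials $H_j(\cos(u/\sqrt n)\mid q_n)$ that is effective simultaneously in $j$ and $n$. Once this uniform estimate is in place, combining Steps 1 and 2 yields
\[
\lim_{n\to\infty}\mathbb{E}_x\!\left[\frac{Q_{n+1}^{q_n}-Q_1^{q_n}}{\sqrt{n}}\right] = \mathbb{E}[Y^\gamma-X^\gamma],
\]
which is the claimed identity.
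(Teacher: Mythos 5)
Your Step 1 is exactly the paper's proof of this theorem: the printed argument consists precisely of the coupling identity \eqref{Ecoupling} together with the joint limit \eqref{weaklyjoint}, and then directly asserts the convergence of the expectations. So in terms of route you and the paper coincide.

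Where you differ is that you correctly observe that passing from the distributional convergence of $\left(Q_{n+1}-Q_1\right)/\sqrt{n}$ to convergence of its mean is not automatic and requires uniform integrability; the paper's proof is silent on this point, so your Step 2 is an addition rather than a reproduction. However, as written your Step 2 is a plan, not a proof: the uniform second-moment bound for $\left(Q_{n+1}^{q_n}-m_n\right)/\sqrt{n}$ is only sketched, and you yourself flag that the needed growth control of $H_j\left(\cos\left(u/\sqrt{n}\right)\mid q_n\right)$ simultaneously in $j$ and $n$ (i.e.\ tail control of the $j$-sum in \eqref{Htrans} beyond the bulk window $|j-m_n|=O\left(\sqrt{n}\right)$) is not established. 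The dominating envelope used in the proof of \eqref{weaklyjoint} is built for a \emph{fixed} rescaled index $m_n\left(y\right)$ and controls the $u$-integral, not the $y$-tails, so multiplying by $\left(\left(j-m_n\right)/\sqrt{n}\right)^2$ and summing over $j$ genuinely needs a new estimate (some uniform-in-$y$ decay of the limiting kernel, or a direct moment computation from the birth--death structure). In short: your identification of the missing uniform-integrability step is a real and worthwhile observation — it is a gap left implicit in the paper as well — but your proposal does not close it, so the argument is incomplete at exactly the point you call the main obstacle. If you want a complete write-up matching the paper's level of rigor elsewhere, either supply the uniform tail/second-moment estimate (e.g.\ via the explicit transition representation \eqref{Htrans} with a bound on the residue sum that is uniform in the index, in the spirit of the lemma bounding $H_{m_n\left(z\right)}$, but made quantitative in $z$), or bypass distributional convergence altogether and prove convergence of $\mathbb{E}_x\left[Q_{n+1}\right]/\sqrt{n}$ and $\mathbb{E}_x\left[Q_1\right]/\sqrt{n}$ directly from the integral formulas, which reduces the problem to the same dominated-convergence framework already set up for \eqref{den1} and \eqref{weaklyjoint}.
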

\begin{proof}
    Using the coupling argument \eqref{Ecoupling} and the joint asymptotic distribution \eqref{weaklyjoint} of $Q_1$ and $Q_{n+1}$, we obtain
    \[
    \lim_{n \to \infty}\mathbb{E}\left[\frac{\# \mathcal{C}_0^{n,\left(q_n\right)}}{\sqrt{n}}\middle|
    U_0 = u\right]= \lim_{n \to \infty}\mathbb{E}\left[\frac{Q_{n+1} - Q_1}{\sqrt{n}} \big| U_0 = u\right] = \mathbb{E}\left[Y^\gamma - X^\gamma\right].
    \]
\end{proof}

We investigate the limit of the joint distribution of $\left(X^\gamma, Y^\gamma\right)$ as $\gamma \to 0$ and $\gamma \to \infty$ respectively. 
\begin{proposition}
    Let $\left(X^\gamma, Y^\gamma\right)$ be the limiting distribution that we derived from the weakly asymmetric limit of the convoy size with density: 
    \[
    f^{\gamma}_{X,Y}\left(x,y\right) = \frac{1}{2\pi}e^{-\gamma x}\int_{0}^{\infty}e^{-c w^{2}}\frac{|\Gamma\left(iw/\gamma\right)|^2}{|\Gamma\left(2iw/\gamma\right)|^2}\mathcal{J}\left(x,w\right)\mathcal{J}\left(y,w\right)dw.
    \]
    When $\gamma \to \infty$, we have 
    \[
    X^{\gamma} \overset{d}{\Rightarrow} \delta_0 \qquad Y^{\gamma} \overset{d}{\Rightarrow} |\mathcal{N}\left(0, 2c\right)|,
    \]
    where $\mathcal{N}\left(0, 2c\right)$ is the normal distribution with mean $0$ and variance $2c$.
    When $\gamma \to 0$, we have 
    \[
    \lim_{\gamma \to 0}\mathbb{E}\left[Y^{\gamma} - X^{\gamma}\right] = 0.
    \]
\end{proposition}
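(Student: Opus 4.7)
The plan splits naturally into the two limiting regimes. For $\gamma\to\infty$, the analysis of $X^\gamma$ is direct: using the substitution $u=\tfrac{1}{\gamma}e^{-\gamma x}$ to integrate the given density, one obtains the closed-form CDF $F_X^\gamma(x)=\exp(-\tfrac{1}{\gamma}e^{-\gamma x})$, from which the pointwise limits $F_X^\gamma(x)\to 1$ for $x>0$ and $F_X^\gamma(x)\to 0$ for $x<0$ are immediate, yielding $X^\gamma\Rightarrow \delta_0$.

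For $Y^\gamma$ in the same regime, I would first establish that the conditional density $f_{Y|X}^\gamma(y\mid 0)$ converges pointwise to $\tfrac{1}{\sqrt{\pi c}}e^{-y^2/(4c)}\mathbf{1}_{y\ge 0}$, and then pass to the marginal by combining this with $X^\gamma\Rightarrow\delta_0$. The required pointwise limits as $\gamma\to\infty$ are: the prefactor $\exp(1/\gamma)\to 1$; $\mathcal{J}(0,w)\to 1$, via $\gamma^{iw/\gamma}\to 1$, the Laurent expansion $\Gamma(2iw/\gamma)/\Gamma(iw/\gamma)\to 1/2$, and ${}_1F_1(1-iw/\gamma;1-2iw/\gamma;-1/\gamma)\to 1$; the weight $|\Gamma(iw/\gamma)|^2/|\Gamma(2iw/\gamma)|^2=4\cosh(\pi w/\gamma)\to 4$ via the identity $|\Gamma(iy)|^2=\pi/(y\sinh\pi y)$; and $\mathcal{J}(y,w)\to \cos(wy)$ for $y>0$ from $(\gamma e^{\gamma y})^{iw/\gamma}\to e^{iwy}$, while for $y<0$ Kummer's transformation ${}_1F_1(a;c;z)=e^z{}_1F_1(c-a;c;-z)$ together with the large-argument asymptotics of ${}_1F_1$ shows the whole expression decays like $\gamma e^{\gamma y}\to 0$. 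A dominated-convergence argument in the $w$-integral, in which $e^{-cw^2}$ absorbs the $\cosh(\pi w/\gamma)\le e^{\pi w}$ weight uniformly for $\gamma\ge 1$, then identifies the limit of $f_{Y|X}^\gamma(y\mid 0)$ with the folded normal density via Lemma \ref{improper}. Marginal convergence of $Y^\gamma$ then follows by a Scheff\'e-type argument.

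For $\gamma\to 0$, I would combine Theorem \ref{weakly} with the coupling identity \eqref{Ecoupling} to write
\[
\mathbb{E}[Y^\gamma-X^\gamma]=\lim_{n\to\infty}\frac{\mathbb{E}[\#\mathcal{C}_0^{n,(q_n)}]}{\sqrt n}=\lim_{n\to\infty}\frac{c}{\sqrt n}\sum_{i=1}^n\mathbb{E}[q_n^{Q_i}],
\]
where the second equality uses the construction of $\mathcal U$ in Theorem \ref{construction} (a refused down-step at level $k$ occurs with probability $q^k$). A one-step drift computation yields $\mathbb{E}[q^{Q_{i+1}}-q^{Q_i}\mid Q_i=k]=c(1-q)^2q^{k-1}-c(1-q)q^{2k-1}$ for $k\ge 1$, together with a negative boundary term at $k=0$; dropping the negative pieces gives the submartingale-type inequality $\mathbb{E}[q^{Q_{i+1}}]\le \mathbb{E}[q^{Q_i}](1+c(1-q)^2/q)$. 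Iterating and using the exact initial value $\mathbb{E}[q_n^{Q_1}]=1-q_n\le \gamma/\sqrt n$, obtained by summing the q-binomial series in \eqref{lawofpi} (since $\sum_k q^{2k}/(q;q)_k=1/(q^2;q)_\infty$), one gets $\mathbb{E}[\#\mathcal C_0^{n,(q_n)}]/\sqrt n\le c\gamma e^{c\gamma^2/q_n}$. Passing $n\to\infty$ and then $\gamma\to 0$ drives this bound to $0$, and since $\mathbb{E}[\#\mathcal C_0^{n,(q_n)}]\ge 0$ forces $\mathbb{E}[Y^\gamma-X^\gamma]\ge 0$, we conclude $\mathbb{E}[Y^\gamma-X^\gamma]\to 0$.

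The principal obstacle lies in the $\gamma\to\infty$ analysis of $Y^\gamma$. The pointwise limits of $\mathcal{J}(y,w)$ demand careful asymptotic analysis of ${}_1F_1$ in different regimes of its argument (separating $y>0$, $y=0$, $y<0$), and obtaining the uniform-in-$\gamma$ domination needed for the $w$-integral is delicate because of the growing $\cosh(\pi w/\gamma)$ weight, which has to be controlled jointly with the large-$w$ behavior of the Hermite-type factors $\mathcal{J}(0,w)$ and $\mathcal{J}(y,w)$. Passing rigorously from $f_{Y|X}^\gamma(\cdot\mid 0)$ to the marginal of $Y^\gamma$ also requires uniform control of $f_{Y|X}^\gamma(y\mid x)$ for $x$ in a shrinking neighborhood of $0$, rather than pointwise information at $x=0$ alone. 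By contrast, the $\gamma\to 0$ half of the proof is essentially elementary once the one-step drift inequality is in place.
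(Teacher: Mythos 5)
Your proposal is essentially correct, but two of its three pieces take routes that differ from the paper's, so a comparison is in order. For $X^\gamma$ as $\gamma\to\infty$ you do exactly what the paper does (explicit Gumbel-type CDF from \eqref{x0weakly}). For $Y^\gamma$, however, the paper does not go through the conditional density at $x=0$: it applies Fubini to the marginal $f_Y^\gamma(y)=\int f_{X,Y}^\gamma(x,y)\,dx$, and the inner integral $I_\gamma(w)=\int e^{-\gamma x}\mathcal{J}(x,w)\,dx$ is evaluated in closed form by the substitution $t=\tfrac{1}{\gamma}e^{-\gamma x}$, giving $I_\gamma(w)\to 1$. This completely avoids the uniformity-in-$x$ issue you correctly identify as the weak point of your conditional-density-plus-Scheff\'e route; since the $x$-integration can be done exactly, I would recommend the paper's path here — your route is not wrong, but it converts an explicit computation into a genuine analytic obstacle. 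Your pointwise limits for the remaining factors (the Gamma ratio tending to $4$ — your exact identity $4\cosh(\pi w/\gamma)$ is in fact sharper than the paper's $\Gamma(z)\sim 1/z$ expansion — and $\mathcal{J}(y,w)\to\cos(wy)\mathbf{1}_{y>0}$ via Kummer) agree with the paper.

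For $\gamma\to 0$ your argument is genuinely different and, in my view, more robust than the paper's. The paper works directly with the limiting density: it factors $f^\gamma_{X,Y}=e^{-\gamma x}K_\gamma(x,y)$ with $K_\gamma$ symmetric, kills the leading term of $\mathbb{E}[Y^\gamma-X^\gamma]$ by symmetry after expanding $e^{-\gamma x}=1-\gamma x+o(\gamma)$, and then bounds the $O(\gamma)$ correction using Kummer's transformation — an expansion that is not uniform over $x\in\mathbb{R}$ and is presented rather loosely. You instead return to the prelimit chain via Theorem \ref{weakly} and \eqref{Ecoupling} (legitimate, since that theorem is proved independently of this proposition), use the exact value $\mathbb{E}[q^{Q_1}]=1-q$ from \eqref{q1}, and close with a one-step drift inequality $\mathbb{E}[q^{Q_{i+1}}]\le(1+c(1-q)^2/q)\,\mathbb{E}[q^{Q_i}]$; I checked the drift computation and the boundary case $k=0$, and the resulting bound $c\gamma e^{c\gamma^2/q_n}$ together with nonnegativity does give the claim. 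This buys you an explicit quantitative rate in $\gamma$ and sidesteps the integrability subtleties of the limiting density (whose marginals the paper itself notes have divergent means as $\gamma\to 0$), at the cost of leaning on Theorem \ref{weakly} rather than on the limit object alone.
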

\begin{remark}
    It turns out that when $\gamma \to \infty$, the limiting distribution of $\left(X^\gamma, Y^\gamma\right)$ converges to $\delta_{x=0} \frac{1}{\sqrt{c\pi}}e^{-\frac{y^2}{4c}}$, which is the limit we derived for fixed $q \in [0,1)$. This implies if $q_n = 1 - w\left(\frac{1}{\sqrt{n}}\right)$ ($w\left(\frac{1}{\sqrt{n}}\right)$ means it converges to $0$ much slower than $\frac{1}{\sqrt{n}}$), then limiting behavior of the expected size of the convoy is the same as the fixed $q$ case. 

    Moreover, when $\gamma \to 0$, both $X^\gamma$ and $Y^\gamma$ have infinite means. However, the difference $\mathbb{E}\left[Y^\gamma - X^\gamma\right]$ goes to $0$ as $\gamma \to 0$. This corresponds to the case where $q_n$ tends to 1 much faster than $1 - \frac{\gamma}{\sqrt{n}}$. Our asymptotic result implies the convoy set disappear at scale $\sqrt{n}$ if $q_n \to 1$ too fast. 
    
    Combining the two limit regimes of $\gamma$, we conclude that $q_n = 1 - \frac{\gamma}{\sqrt{n}}$ is the critical scaling for the convoy set existing at scale $\sqrt{n}$.
\end{remark}
\begin{proof}
    We first prove the case $\gamma \to 0$. Define a symmetric kernel
    \[
    K_\gamma\left(x,y\right) := \frac{1}{2\pi}\int_{0}^{\infty} e^{-c w^{2}} \frac{|\Gamma\left(iw/\gamma\right)|^2}{|\Gamma\left(2iw/\gamma\right)|^2} \mathcal{J}\left(x,w\right)\mathcal{J}\left(y,w\right) dw,
    \]
    so that
    \[
    f^{\left(\gamma\right)}_{X,Y}\left(x,y\right) = e^{-\gamma x}K_\gamma\left(x,y\right).
    \]
    For small gamma we have the elementary asymptote $e^{-\gamma x} = 1 - \gamma x + o\left(\gamma\right)$, which allows us to write the expectation as
    \[
    \mathbb{E}\left[Y^\gamma - X^\gamma\right] = \iint_{\mathbb{R}^2}\left(y-x\right)K_\gamma\left(x,y\right)dxdy - \gamma \iint \left(y-x\right)xK_\gamma\left(x,y\right)dxdy + o\left(\gamma\right).
    \]
    Since $K_\gamma\left(x,y\right)$ is symmetric in $x,y$, for any $\gamma$
    \[
    \iint_{\mathbb{R}^2}\left(y-x\right)K_\gamma\left(x,y\right)dxdy = 0.
    \]
    It remains to bound from above for $\gamma \iint \left(y-x\right)xK_\gamma\left(x,y\right)dxdy$ so that 
    \[
    \lim_{\gamma \to 0}\gamma \iint \left(y-x\right)xK_\gamma\left(x,y\right)dxdy = 0.
    \]
    Note that 
    \[
    \mathcal{J}\left(x,u\right) \leq |\gamma^{iu/\gamma}|\frac{|\Gamma\left(2iu/\gamma\right)|}{|\Gamma\left(iu/\gamma\right)|} |{}_1F_{1}\left(
        1 - \tfrac{iw}{\gamma};
        1 - \tfrac{2iw}{\gamma};
        -\tfrac{1}{\gamma} e^{-\gamma x}
    \right)|.
    \]
    $\frac{|\Gamma\left(2iu/\gamma\right)|}{|\Gamma\left(iu/\gamma\right)|}$ in $\mathcal{J}\left(x,u\right)$ and in $\mathcal{J}\left(y,u\right)$ will cancel out the prefactor $\frac{|\Gamma\left(iu/\gamma\right)|^2}{|\Gamma\left(2iu/\gamma\right)|^2}$ in $K_{\gamma}\left(x,y\right)$. So we only have to bound the hypergeometric function term. It has exponential decay with respect to $x, y, \gamma$ by using the Kummer transformations for confluent hypergeometric function \cite[13.1.27]{abramowitz1965handbook}. 
    \[
    {}_1F_1\left(a;b;z\right) = e^z {}_1F_1\left(b-a;b;-z\right),
    \]
    with $z = -\tfrac{1}{\gamma} e^{-\gamma x}$, $a = 1 - \tfrac{iw}{\gamma}$, $b = 1 - \tfrac{2iw}{\gamma}$. Now we prove the case when $\gamma \to \infty$. The distribution of $X^{\gamma}$ is explicit \eqref{x0weakly}. By computing the cumulative distribution function $F^\gamma\left(x\right)$ of $X^{\gamma}$, we obtain $F^\gamma\left(x\right) \to 0$ for any $x<0$ and $F^\gamma\left(x\right) \to 1$ for any $x>0$. Hence $X^\gamma \to \delta_{x = 0}$ in distribution. 

    For the marginal distribution of $Y^\gamma$consider
    \[
    f^{\gamma}_Y\left(y\right)=\int_{-\infty}^{\infty} f^{\gamma}_{X,Y}\left(x,y\right)dx.
    \]
    By Fubini theorem, we can write $f^{\gamma}_Y\left(y\right)$ as 
    \[
    f^{\gamma}_Y\left(y\right)=\frac{1}{2\pi}\int_{0}^{\infty}e^{-c w^{2}}\frac{|\Gamma\left(iw/\gamma\right)|^2}{|\Gamma\left(2iw/\gamma\right)|^2} \left( \int_{-\infty}^{\infty} e^{-\gamma x} \mathcal{J}\left(x,w\right) dx \right) \mathcal{J}\left(y,w\right)dw.
    \]
    To justify the asymptotic behavior of the $x$–integral, define
    \[
    I_\gamma\left(w\right):= \int_{-\infty}^{\infty} e^{-\gamma x}\mathcal{J}\left(x,w\right)dx.
    \]
    Substituting this into $I_\gamma\left(w\right)$ and performing the change of variables
    \[
    t = \frac{1}{\gamma}e^{-\gamma x},\qquad e^{-\gamma x} = \gamma t,\qquad e^{\gamma x} = \frac{1}{\gamma t},\qquad dx = -\frac{dt}{\gamma t},
    \]
    one obtains after a short computation
    \[
    I_\gamma\left(w\right) = 2\Re\left[\frac{\Gamma\left(2\varepsilon\right)}{\Gamma\left(\varepsilon\right)}\int_0^\infty t^{-\varepsilon}{}_1F_1\bigl(1-\varepsilon;1-2\varepsilon;-t\bigr)dt\right],
    \qquad \varepsilon = \frac{i w}{\gamma}.
    \]
    At $\varepsilon=0$ we use ${}_1F_1\left(1;1;-t\right)=e^{-t}$ and obtain
    \[
    I_\gamma\left(0\right) = 2 \cdot \frac{1}{2} \int_0^\infty e^{-t}dt = 1 \Rightarrow \lim_{\gamma \to \infty}I_\gamma\left(w\right) = 1,
    \]
    for any fixed $w$.
    The expansion $\Gamma\left(z\right)\sim 1/z$ as $z\to 0$ gives
    \[
    \frac{|\Gamma\left(iw/\gamma\right)|^{2}}{|\Gamma\left(2iw/\gamma\right)|^{2}}
    \sim \frac{|\gamma/\left(iw\right)|^{2}}{|\gamma/\left(2iw\right)|^{2}}=4,
    \qquad \gamma\to\infty.
    \]
    Thus the gamma ratio tends to the constant $4$.
    The only nontrivial dependence on $y$ comes from $\mathcal{J}\left(y,w\right)$.
    Writing $\varepsilon = iw/\gamma$, one has
    \[
    \left(\gamma e^{\gamma y}\right)^{\varepsilon} = \exp\big( \varepsilon\left(\log\gamma + \gamma y\right) \big) = \exp\big(i\tfrac{w}{\gamma}\log\gamma\big)e^{iwy},
    \]
    so the leading oscillatory behavior of $\mathcal{J}\left(y,w\right)$ is simply
    $e^{iwy}$.  The remaining factors in $\mathcal{J}$ converge as
    $\gamma\to\infty$: 
    $\Gamma\left(2\varepsilon\right)/\Gamma\left(\varepsilon\right)\to \tfrac12$, and the confluent
    hypergeometric term tends to $1$ when $y>0$ (since
    $\frac{1}{\gamma}e^{-\gamma y}\to 0$), but decays to $0$ when $y<0$ (since
    $\frac{1}{\gamma}e^{-\gamma y}\to\infty$). Taking real parts gives the
    asymptotic form
    \[
    \mathcal{J}\left(y,w\right)\approx
    \begin{cases}
    \cos\left(wy\right), & y>0,\\[3pt]
    0, & y<0,
    \end{cases}
    \qquad \left(\gamma\ \text{large}\right),
    \]
    Substituting these leading terms into the expression for $f_Y\left(y\right)$ gives
    \[
    \lim_{\gamma \to \infty}f^{\gamma}_Y\left(y\right)=
    \begin{cases}
    \displaystyle \frac{2}{\pi}\int_0^\infty e^{-c w^2}\cos\left(wy\right)dw, & y\ge 0,\\[10pt]
    0, & y<0.
    \end{cases}
    \]
    This matches the density of the folded normal random variable with mean $0$, variance $2c$, which appeared in our analysis for fixed $q$.
\end{proof}

\section{The proof of Theorem \ref{thm.main}}\label{exactformula}
The proof of Theorem \ref{thm.main} is divided into two steps. Before we dive into the technical parts, we give a road-map for the proof. In the first step, by using the tower property and recurrence relations, we write the expectation as a linear combination of the expectation depending only on $Q_1$, although the coefficients of each term is complicated. The second step is to simplify the coefficients. 

We start by introducing a lemma from \cite{martin2020stationary}. 

\begin{lemma}[\cite{martin2020stationary}, p.47 (6.6)]
    Let $\alpha \in \left(0,1\right)$ and $q \in [0,1)$, we have
    \begin{equation}
         \sum_{m = 0}^{\infty} \frac{\alpha^m q^{2m}}{\left(q;q\right)_{m}} = \left(1 -\alpha q\right)\sum_{m = 0}^{\infty}\frac{\alpha^m q^{m}}{\left(q;q\right)_{m}}.\label{martin}
    \end{equation}
\end{lemma}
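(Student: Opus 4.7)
The plan is to recognize both sides of \eqref{martin} as evaluations of the $q$-exponential series and reduce the identity to a one-line manipulation. Euler's classical formula states that
\[
\sum_{m=0}^{\infty} \frac{z^m}{\left(q;q\right)_m} = \frac{1}{\left(z;q\right)_\infty}, \qquad |z|<1.
\]
Since $\alpha\in(0,1)$ and $q\in[0,1)$ give $\alpha q,\ \alpha q^{2}\in[0,1)$, both of the series appearing in \eqref{martin} converge absolutely, so Euler's identity applies term by term.

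First, I would apply Euler's identity with $z=\alpha q^{2}$ to collapse the left-hand side of \eqref{martin} into $1/(\alpha q^{2};q)_\infty$. Next, applying the same identity with $z=\alpha q$ rewrites the sum on the right-hand side as $1/(\alpha q;q)_\infty$. Finally, I would peel off the first factor of the infinite product via $(\alpha q;q)_\infty=(1-\alpha q)(\alpha q^{2};q)_\infty$, which matches the prefactor $(1-\alpha q)$ on the right and produces the same closed form $1/(\alpha q^{2};q)_\infty$ as on the left.

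If a self-contained argument is preferred, one can avoid invoking Euler's identity altogether by manipulating the series directly: expand $(1-\alpha q)\sum_{m}\frac{\alpha^{m}q^{m}}{(q;q)_m}$ as a difference of two $q$-series, reindex the subtracted sum by $n=m+1$, and use the elementary relation $\frac{1}{(q;q)_m}-\frac{1}{(q;q)_{m-1}}=\frac{q^{m}}{(q;q)_m}$ for $m\ge 1$ to collect terms. After cancellation of the $m=0$ contribution, the remaining sum is exactly $\sum_{m\ge 0}\frac{\alpha^{m}q^{2m}}{(q;q)_m}$, matching the left-hand side coefficient by coefficient.

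There is no substantial obstacle in either route: the statement is a routine $q$-series manipulation, with the only technical input being the absolute convergence bound $|\alpha q|<1$ that legitimizes the term-by-term rearrangements. The main aesthetic choice is simply whether to quote Euler's identity as a black box or to derive the equivalent telescoping by hand.
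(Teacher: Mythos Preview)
Your proposal is correct. Both routes you outline---evaluating each side via Euler's $q$-exponential identity $\sum_{m\ge 0} z^m/(q;q)_m = 1/(z;q)_\infty$ and then using $(\alpha q;q)_\infty = (1-\alpha q)(\alpha q^2;q)_\infty$, or alternatively reindexing and telescoping with $\frac{1}{(q;q)_m}-\frac{1}{(q;q)_{m-1}}=\frac{q^m}{(q;q)_m}$---are valid and standard.

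As for comparison: the paper does not supply its own proof of this lemma. It is quoted as equation (6.6) from Martin's paper \cite{martin2020stationary} and used as a black box in the subsequent computation of $\mathbb{E}_x[q^{kQ_1}]$. So there is nothing to compare against; your argument simply fills in a proof that the paper chose to cite rather than reproduce.
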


\begin{lemma}
\begin{enumerate}[label=(\arabic*)]
Let $Q_n$ be the queuing process defined in \ref{construction} and set $c = x\left(1-x\right)$. Then:
\item For any $n \geq 1$,
    \begin{equation}
        \mathbb{E}_{x}\left[Q_{n+1} - Q_1\right] = c\sum_{i=1}^{n} \mathbb{E}_x\left[q^{Q_i}\right].\label{tower}
    \end{equation}
\item  For any integer $k\geq1$,
    \begin{equation}
        \mathbb{E}_x\left[q^{k Q_1}\right] = \left(q;q\right)_{k}.\label{q1}
    \end{equation}
\item For any integer $k \geq 1$,
    \begin{equation}
        \mathbb{E}_{x}\left[q^{k Q_{n+1}}\right] = \left(1 + c a_k\right) \mathbb{E}_{x}\left[q^{k Q_{n}}\right] + c b_k \mathbb{E}_{x}\left[q^{\left(k+1\right) Q_{n}}\right],\label{tower2}
    \end{equation}
    where 
    \[
    a_k = q^k + q^{-k} - 2, \quad b_k = 1 - q^{-k}.
    \]
\end{enumerate}
\end{lemma}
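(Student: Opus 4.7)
The plan is to prove the three statements essentially by direct computation from the transition probabilities of the birth--death chain $\left(Q_i\right)$, combined with the explicit form of $\pi_k$ from Proposition \ref{prop.finite} and the q-binomial theorem. Recall that the transition probabilities of $Q_i$ are $P\left(k,k+1\right)=c$, $P\left(k,k-1\right)=c\left(1-q^k\right)$, $P\left(k,k\right)=1-c\left(2-q^k\right)$, where $c=x\left(1-x\right)$.

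For statement $(1)$, I would first observe that the conditional one-step drift is
\[
\mathbb{E}\left[Q_{i+1}-Q_i\mid Q_i=k\right]=c-c\left(1-q^k\right)=cq^k,
\]
so $\mathbb{E}_x\left[Q_{i+1}-Q_i\right]=c\,\mathbb{E}_x\left[q^{Q_i}\right]$. Summing telescopically from $i=1$ to $n$ yields \eqref{tower}. This is the easiest of the three and requires only the tower property.

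For statement $(2)$, I would substitute the closed form $\pi_m=\frac{\left(q;q\right)_\infty}{\left(q;q\right)_m}q^m$ from \eqref{lawofpi} to obtain
\[
\mathbb{E}_x\left[q^{kQ_1}\right]=\left(q;q\right)_\infty\sum_{m\ge 0}\frac{q^{\left(k+1\right)m}}{\left(q;q\right)_m}.
\]
Then I apply the q-binomial theorem $\sum_{m\ge 0}z^m/\left(q;q\right)_m = 1/\left(z;q\right)_\infty$ with $z=q^{k+1}$, which gives $\left(q;q\right)_\infty/\left(q^{k+1};q\right)_\infty=\left(q;q\right)_k$ after cancelling factors. (Equivalently one can iterate Martin's identity \eqref{martin}; the two approaches are the same computation in disguise.)

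For statement $(3)$, which I expect to be the most delicate of the three purely algebraically, the plan is to compute the conditional moment generating function in one stroke. Conditioning on $Q_n=j$,
\[
\mathbb{E}\left[q^{kQ_{n+1}}\mid Q_n=j\right]=c\,q^{k\left(j+1\right)}+c\left(1-q^j\right)q^{k\left(j-1\right)}+\bigl(1-c\left(2-q^j\right)\bigr)q^{kj}.
\]
Factoring out $q^{kj}$ and rearranging, the bracket becomes $1+c\left(q^k+q^{-k}-2\right)+c\left(1-q^{-k}\right)q^j=1+ca_k+cb_kq^j$. Multiplying back by $q^{kj}$ splits the right-hand side as $\left(1+ca_k\right)q^{kj}+cb_kq^{\left(k+1\right)j}$, and taking outer expectation gives \eqref{tower2}. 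The only subtlety is keeping track of the signs when $q^{-k}$ appears (so that $b_k=1-q^{-k}$ is negative), but this is merely bookkeeping. No approximation or limit is involved; the entire lemma reduces to manipulation of the one-step transition law together with \eqref{lawofpi} and the q-binomial theorem.
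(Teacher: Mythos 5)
Your proposal is correct, and parts (1) and (3) follow exactly the paper's argument: the one-step drift computation plus telescoping for (1), and conditioning on $Q_n$ with the same algebraic rearrangement for (3) (the $q^{-k}$ term at $Q_n=0$ is harmless in both write-ups because its coefficient $1-q^{0}$ vanishes). The only divergence is in part (2): the paper establishes the base case $k=1$ and then derives the recurrence $\mathbb{E}_x\left[q^{kQ_1}\right]=\left(1-q^{k}\right)\mathbb{E}_x\left[q^{\left(k-1\right)Q_1}\right]$ by substituting $\alpha=q^{k-1}$ into Martin's identity \eqref{martin} and concluding by induction, whereas you sum the series in one stroke via Euler's $q$-exponential identity $\sum_{m\ge0}z^{m}/\left(q;q\right)_{m}=1/\left(z;q\right)_{\infty}$ with $z=q^{k+1}$ and cancel to get $\left(q;q\right)_{k}$; as you note, these are the same computation packaged differently, with your version avoiding the induction at the cost of invoking a (standard) identity not stated in the paper.
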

\begin{proof}
    \smallskip
    \noindent\textbf{(1)}  
    We first prove \eqref{tower}. This is a direct application of the tower property of expectation, which is used in the second equality below:
    \[
    \begin{aligned}
        \mathbb{E}_{x}\left[Q_{n+1} - Q_1\right] &= \sum_{i=1}^{n}\mathbb{E}_{x}\left[Q_{i+1} - Q_i\right] = \sum_{i=1}^{n}\mathbb{E}_{x}\left[\mathbb{E}\left[Q_{i+1} - Q_i| Q_i\right]\right]\\
        &=\sum_{i=1}^{n}\mathbb{E}_{x}\left[x\left(1-x\right)\left(1 - 1 + q^{Q_i}\right)\right]=c\sum_{i=1}^{n}\mathbb{E}_x\left[q^{Q_i}\right].
    \end{aligned}
    \]

    \smallskip
    \noindent\textbf{(2)}  
    Next, we establish \eqref{q1}.
    We begin with the case $k=1$. Using the recurrence relation of $\pi_k$, we compute
    \[
    \begin{aligned}
        \mathbb{E}_x\left[q^{Q_1}\right] = \sum_{m=0}^{\infty} q^m \pi_k = \sum_{m = 0}^{\infty}\frac{q^{2m}}{\left(q;q\right)_m} \pi_0 = \left(1-q\right)\sum_{m = 0}^{\infty}\frac{q^m}{\left(q;q\right)_m} \pi_0 = \left(1-q\right)\sum_{m = 0}^{\infty} \pi_k = 1-q.
    \end{aligned}
    \]
    We used \eqref{martin} with $\alpha = 1$ in the third equality.
    For general $k \geq 2$, plugging $\alpha = q^{k-1}$ into \eqref{martin} yields the recurrence
    \[
    \mathbb{E}_x\left[q^{k Q_1}\right] = \sum_{m=0}^{\infty} \frac{q^{\left(k+1\right)m}}{\left(q;q\right)_m}\pi_0 = \left(1 - q^k\right)\mathbb{E}_x\left[q^{\left(k-1\right) Q_1}\right].
    \]
    Therefore, \eqref{q1} follows by induction on $k$.

    \smallskip
    \noindent\textbf{(3)} 
    Finally, we prove \eqref{tower2}.  
    Once again, the key step is to apply the tower property of expectation:
    \[
    \begin{aligned}
        \mathbb{E}_{x}\left[q^{k Q_{n+1}}\right] &= \mathbb{E}_{x}\left[\mathbb{E}\left[q^{k Q_{n+1}}|Q_n\right]\right]\\
        &=\mathbb{E}_{x}\left[c q^{k\left(Q_n+1\right)} + c\left(1 - q^{Q_n}\right) q^{k\left(Q_n-1\right)} + \left(x^2 + \left(1-x\right)^2 + cq^{Q_N}\right)q^{k Q_n}\right]\\
        &= \mathbb{E}_{x}\left[\left(1 + c\left(q^k + q^{-k} -2\right)\right)q^{k Q_n} + c\left(1 - q^{-k}\right)q^{\left(k+1\right)Q_n}\right]\\
        &= \left(1 + c a_k\right) \mathbb{E}_{x}\left[q^{k Q_{n}}\right] + c b_k \mathbb{E}_{x}\left[q^{\left(k+1\right) Q_{n}}\right],
    \end{aligned}
    \]
    where $a_k = q^k + q^{-k} - 2$ and $b_k = 1 - q^{-k}$.
    
    \smallskip
    Combining the above three steps completes the proof of the lemma.
\end{proof}

\begin{proposition}
    We have 
    \begin{equation}
        \mathbb{E}_{x}\left[q^{Q_{n+1}}\right] = \sum_{k=0}^{n}c^k\left(q;q\right)_{k+1} \left(q^{-1};q^{-1}\right)_k h_{n-k}\left(1+ca_1,\dots 1+ca_{k+1}\right).\label{step1}
    \end{equation}
    As a consequence, 
    \begin{equation}
        \mathbb{E}_{x}\left[Q_{n+1} - Q_1\right] = c \sum_{i=0}^{n-1} \sum_{k=0}^{i}c^k\left(q;q\right)_{k+1} \left(q^{-1};q^{-1}\right)_k h_{i-k}\left(1+ca_1,\dots 1+ca_{k+1}\right).\label{step1.1}
    \end{equation}
\end{proposition}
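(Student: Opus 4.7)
The plan is to iterate the two-step recurrence from \eqref{tower2} as a path sum indexed by the number of promotions $k \mapsto k+1$. Write $A_k := 1 + c a_k$ and $B_k := c b_k$ and set $f_n(k) := \mathbb{E}_x[q^{kQ_n}]$, so that \eqref{tower2} becomes
\[
f_{n+1}(k) = A_k f_n(k) + B_k f_n(k+1),
\]
with initial datum $f_1(k) = (q;q)_k$ supplied by \eqref{q1}. The target is $f_{n+1}(1)$.

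Unrolling this recurrence $n$ times starting from $f_{n+1}(1)$ expresses it as a weighted sum of the values $f_1(j+1)$ for $j = 0, 1, \dots, n$. Each term corresponds to a lattice path of length $n$ on $\mathbb{Z}_{\geq 1}$ that starts at $k=1$ and at each step either \emph{stays} at the current level $k$ (contributing a factor $A_k$) or \emph{promotes} $k$ to $k+1$ (contributing $B_k$). A path ending at level $j+1$ has exactly $j$ promotions at positions $1 \leq i_1 < \cdots < i_j \leq n$; the sojourn lengths at the successive levels $1, 2, \dots, j+1$ are then $i_1 - 1,\ i_2 - i_1 - 1,\ \dots,\ n - i_j$, a $(j+1)$-tuple of nonnegative integers summing to $n - j$. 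Summing the monomial $A_1^{i_1-1} A_2^{i_2-i_1-1} \cdots A_{j+1}^{n-i_j}$ over all choices of $(i_1, \dots, i_j)$ is equivalent to summing over all such tuples, which is by definition the complete homogeneous symmetric polynomial $h_{n-j}(A_1, \dots, A_{j+1})$.

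The overall prefactor is $\prod_{l=1}^j B_l = c^j \prod_{l=1}^j (1 - q^{-l}) = c^j (q^{-1};q^{-1})_j$, where the last equality uses the definition of the q-Pochhammer symbol with base $q^{-1}$. Multiplying by $f_1(j+1) = (q;q)_{j+1}$ and relabeling $j = k$ gives \eqref{step1}. Formula \eqref{step1.1} then follows by substituting \eqref{step1} into \eqref{tower} and reindexing $i \to i+1$ in the outer sum.

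The main obstacle is purely organizational: keeping the two indices (the time index $n$ and the exponent index $k$) straight during the path enumeration and confirming that the sojourn-time sum reduces to $h_{n-j}$. There is no analytic difficulty here — the identity is essentially a combinatorial expansion of a triangular recursion — but the correct packaging of the weights into q-Pochhammer symbols and complete homogeneous polynomials is what makes the final formula recognizable.
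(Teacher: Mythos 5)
Your proof is correct, but it takes a different route from the paper's. The paper first strengthens the claim to all exponents $k\ge 1$, namely
\[
\mathbb{E}_x\left[q^{kQ_{n+1}}\right]=\sum_{j=0}^{n} c^{j}\left(q;q\right)_{k+j}\Big(\prod_{m=0}^{j-1} b_{k+m}\Big)h_{n-j}\big(1+ca_k,\dots,1+ca_{k+j}\big),
\]
and verifies it by induction on $n$, where the inductive step merges the two resulting sums via the Pascal-type recurrence $h_m\left(x_k,\dots,x_{k+j}\right)=x_kh_{m-1}\left(x_k,\dots,x_{k+j}\right)+h_m\left(x_{k+1},\dots,x_{k+j}\right)$; specializing to $k=1$ then gives \eqref{step1}. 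You instead unroll the same recurrence \eqref{tower2} directly as a sum over lattice paths with ``stay'' and ``promote'' steps, and identify $h_{n-j}\left(A_1,\dots,A_{j+1}\right)$ as the generating function of the sojourn-time compositions — which is exactly the monomial definition of the complete homogeneous symmetric polynomial. The two arguments are two sides of the same coin (induction versus explicit solution of a triangular recursion), but yours has the advantage of not requiring one to guess the strengthened induction hypothesis in general $k$, and it replaces the symmetric-function recurrence by the more elementary combinatorial description of $h_m$; the paper's version is more compact to write down once the general-$k$ formula is known. Your bookkeeping checks out: the sojourn lengths $i_1-1,\ i_2-i_1-1,\dots,n-i_j$ do sum to $n-j$, the promotion weights multiply to $c^j\prod_{l=1}^j\left(1-q^{-l}\right)=c^j\left(q^{-1};q^{-1}\right)_j$, and the passage to \eqref{step1.1} via \eqref{tower} with the shift $i\mapsto i+1$ is exactly what the paper does.
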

\begin{proof}
    Combining \eqref{step1} and \eqref{tower} implies \eqref{step1.1}. So it is sufficient to prove  \eqref{step1}.
    
    We use induction to prove a stronger statement: For all $n\ge0$ and $k\ge1$,
    \begin{equation}
        \mathbb{E}_x\left[q^{kQ_{n+1}}\right]=\sum_{j=0}^{n} c^{j}\left(q;q\right)_{k+j}\Big(\prod_{m=0}^{j-1} b_{k+m}\Big)h_{n-j}\big(1+ca_k,1+ca_{k+1},\dots,1+ca_{k+j}\big).
        \label{strong}
    \end{equation}

    When $n=0$, the right-hand side is $\left(q;q\right)_kh_0\left(\cdot\right)=\left(q;q\right)_k=\mathbb{E}_x\left[q^{kQ_1}\right]$ due to \eqref{q1}.
    Assume \eqref{strong} holds for a fixed $n$ and all $k\ge1$. Then, by \eqref{tower2}
    \[
    \mathbb{E}_x\left[q^{kQ_{n+2}}\right] =\left(1+ca_k\right)\mathbb{E}_x\left[q^{kQ_{n+1}}\right] +cb_k\mathbb{E}_x\left[q^{\left(k+1\right)Q_{n+1}}\right].
    \]
    Apply the induction hypothesis to both expectations (first with parameter $k$, second with $k+1$), reindex $j\mapsto j+1$ in the second sum so that we can use the standard identity for homogeneous symmetric polynomials
    \begin{equation}
        h_m\left(x_k,\dots,x_{k+j}\right) = x_kh_{m-1}\left(x_k,\dots,x_{k+j}\right) + h_m\left(x_{k+1},\dots,x_{k+j}\right),\label{standard}
    \end{equation}
    with $x_r:=1+ca_r$. This merges the two sums into \eqref{strong} with $n$ replaced by $n+1$. Hence \eqref{strong} holds for all $n,k$ by induction.

    Finally, taking $k=1$ gives
    \[
    \mathbb{E}_x\left[q^{Q_{n+1}}\right] =\sum_{j=0}^{n} c^{j}\left(q;q\right)_{j+1}\Big(\prod_{m=1}^{j} b_{m}\Big) h_{n-j}\big(1+ca_1,\dots,1+ca_{j+1}\big),
    \]
    and, in particular, we have $\prod_{m=1}^j b_m=\left(q^{-1};q^{-1}\right)_j$. This is exactly the stated formula \eqref{step1}.
\end{proof}

Now we embark on the second step to derive a simplified formula. We need the so-called binomial formula of symmetric polynomials as input.
\begin{proposition}[\cite{macdonald1998symmetric}, p.47, example 10]
    Let $s_\lambda$ be the Schur polynomial indexed by a partition $\lambda$, we have
    \begin{equation}
        s_\lambda\left(1+x_1,\dots 1+ x_n\right) = \sum_{\mu \prec \lambda} det\left[\binom{\lambda_i + n -i}{\mu_j + n -i}\right]_{i,j = 1}^{n} s_{\mu}\left(x_1, \dots x_n\right).
    \end{equation}
    In particular, take $\lambda$ as a $n \times 1$ one row Young diagram, we have the binomial formula for homogeneous symmetric function.
    \begin{equation}
        h_k\left(1+x_1,\dots 1+ x_n\right) = \sum_{j = 0}^{k} \binom{k + N -i}{j + N -1} h_{j}\left(x_1, \dots x_n\right).\label{binomial}
    \end{equation}

\end{proposition}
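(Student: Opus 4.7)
The plan is to give a clean derivation of the Schur binomial identity via the bialternant (Weyl character) formula together with the Cauchy--Binet formula, and then recover the homogeneous symmetric polynomial identity as the one-row specialization.

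First I would write the Schur polynomial via the ratio of alternants,
\[
s_\lambda(y_1,\dots,y_n) = \frac{a_{\lambda+\delta}(y)}{a_\delta(y)},\qquad a_\nu(y):=\det\bigl[y_i^{\nu_j}\bigr]_{i,j=1}^n,\qquad \delta=(n-1,n-2,\dots,0),
\]
and note that the Vandermonde denominator $a_\delta$ depends only on the differences $y_i-y_j$, hence is invariant under the substitution $y_i=1+x_i$: $a_\delta(1+x)=a_\delta(x)$. Thus it suffices to expand the numerator. Applying the binomial theorem entrywise,
\[
(1+x_i)^{\lambda_j+n-j} = \sum_{m\ge 0} \binom{\lambda_j+n-j}{m}\,x_i^{m},
\]
which exhibits the matrix $\bigl[(1+x_i)^{\lambda_j+n-j}\bigr]_{i,j}$ as the product of an infinite $n\times\infty$ matrix $A=[x_i^{m}]$ and an $\infty\times n$ matrix $B=\bigl[\binom{\lambda_j+n-j}{m}\bigr]_{m,j}$.

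Next I would apply the Cauchy--Binet formula, which yields
\[
a_{\lambda+\delta}(1+x)=\det(AB)=\sum_{m_1<m_2<\cdots<m_n}\det\bigl[x_i^{m_k}\bigr]_{i,k}\cdot\det\Bigl[\binom{\lambda_j+n-j}{m_k}\Bigr]_{k,j}.
\]
Parametrize each strictly increasing $n$-tuple $(m_1<\cdots<m_n)$ by a partition $\mu$ with at most $n$ parts via $m_k=\mu_{n+1-k}+k-1$ (so that the $m_k$ correspond to the reversed sequence $\mu_i+n-i$). The first determinant becomes $a_{\mu+\delta}(x)$ up to the sign of the reversal permutation, which is absorbed into the second determinant after the matching row reversal; using invariance of determinants under simultaneous transposition rearrangement, the result is
\[
a_{\lambda+\delta}(1+x)=\sum_{\mu}\det\Bigl[\binom{\lambda_i+n-i}{\mu_j+n-j}\Bigr]_{i,j=1}^{n}\,a_{\mu+\delta}(x).
\]
Dividing both sides by $a_\delta(x)$ gives the first identity. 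The restriction $\mu\subseteq\lambda$ (interpretation of $\mu\prec\lambda$) then follows because the determinant vanishes whenever the shifted parts of $\mu$ do not interlace those of $\lambda$: indeed, if $\mu_1>\lambda_1$ then every binomial in the first row of the matrix is zero, and more generally one argues by row-wise monotonicity and the standard support analysis for matrices of binomial coefficients.

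For the specialization I would set $\lambda=(k,0,\dots,0)$, so $s_\lambda=h_k$; the coefficient $\det\bigl[\binom{\lambda_i+n-i}{\mu_j+n-j}\bigr]$ collapses since only $\mu=(j,0,\dots,0)$ contributes a nonvanishing Schur polynomial (namely $s_\mu=h_j$), and the $n\times n$ determinant reduces to the single entry $\binom{k+n-1}{j+n-1}$ after noting that all other rows are forced into a triangular form by the identities $\binom{n-i}{n-i}=1$ and $\binom{n-i}{n-i'}=0$ for $i<i'$. I would double-check this collapse by carrying out the row-reduction explicitly in the $n=2,3$ cases before stating it in full generality. The main obstacle is keeping the index conventions consistent through the Cauchy--Binet parametrization and the reversal of the multi-index $(m_1,\dots,m_n)$; once that is in place, the vanishing conditions and the one-row collapse are routine.
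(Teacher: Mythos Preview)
The paper does not give its own proof of this proposition: it is quoted from Macdonald (p.~47, Example~10) and used as a black box. So there is nothing in the paper to compare against; your task is simply to produce a valid argument for the cited identity, and your Cauchy--Binet derivation via the bialternant formula is exactly the standard one (and essentially the argument Macdonald indicates). The key steps---translation invariance of the Vandermonde, entrywise binomial expansion of $(1+x_i)^{\lambda_j+n-j}$, Cauchy--Binet over strictly increasing index sets, and the reparametrization $m_k=\mu_{n+1-k}+k-1$ with the two reversal signs cancelling---are all correct. Two small points: your phrase ``only $\mu=(j,0,\dots,0)$ contributes a nonvanishing Schur polynomial'' should read ``a nonvanishing \emph{coefficient}'' (every $s_\mu$ is a nonzero polynomial; what forces the restriction is that $\mu\subseteq\lambda=(k)$ makes the determinant vanish otherwise). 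Also, your computation of the one-row coefficient as $\binom{k+n-1}{j+n-1}$ is correct and in fact silently repairs two typographical slips in the paper's displayed formulas (the lower entry of the general determinant should read $\mu_j+n-j$, and the binomial in the $h_k$ formula should be $\binom{k+n-1}{j+n-1}$).
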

Now we prove the main Theorem \ref{thm.main}. 
\begin{remark}
    In the following computation, we define $B_0\left(1,q\right) = 1$ for the simplicity of notation, this is not included in the original paper \cite{han1999q}.
\end{remark}
\begin{proof}[Proof of the theorem]
It is sufficient to prove, 
\begin{equation}
    \mathbb{E}_{x}\left[q^{ Q_{n+1}}\right] = \sum_{k=0}^{n} \left(1-q\right)^{2k+1} \left(-c\right)^{k} \binom{n}{k} B_k\left(1,q\right). \label{suff}
\end{equation}
Indeed, from \eqref{tower},
\[
\mathbb{E}_{x}\left[Q_{n+1}-Q_1\right]
= c\sum_{i=1}^{n}\mathbb{E}_{x}\big[q^{Q_i}\big].
\]
Apply \eqref{suff} with $n=i-1$ and and interchange finite sums:
\[
\mathbb{E}_{x}\left[Q_{n+1}-Q_1\right]
= c\sum_{k=0}^{n-1}\left(1-q\right)^{2k+1}\left(-c\right)^k B_k\left(1,q\right)
\sum_{i=k}^{n-1}\binom{i}{k}.
\]
Using the binomial identity $\sum_{i=k}^{n-1}\binom{i}{k}=\binom{n}{k+1}$, we obtain
\[
\mathbb{E}_{x}\left[Q_{n+1}-Q_1\right]
= c\sum_{k=0}^{n-1}\left(1-q\right)^{2k+1}\left(-c\right)^k\binom{n}{k+1}B_k\left(1,q\right)
= \sum_{k=0}^{n-1}\left(-1\right)^k\left(1-q\right)^{2k+1}c^{k+1}\binom{n}{k+1}B_k\left(1,q\right),
\]
which is \eqref{exact}.

Now we prove \eqref{suff}. We expand $h_{n-k}$ by using \eqref{binomial} and homogeneity of $h_{n-k}$ to obtain
\[
h_{n-k}\big(1+c a_1,\dots,1+c a_{k+1}\big)
=\sum_{j=0}^{n-k} c^{j}\binom{n}{k+j}h_{j}\left(a_1,\dots,a_{k+1}\right).
\]
Substituting $h_{n-k}\big(1+c a_1,\dots,1+c a_{k+1}\big)$ in \eqref{step1} by the previous equality and collecting powers of $c$, we get
\[
\mathbb{E}_{x}\big[q^{Q_{n+1}}\big]
= \sum_{k=0}^{n}\sum_{j=0}^{n-k}
   c^{k+j}\binom{n}{k+j}
   h_{j}\left(a_1,\dots,a_{k+1}\right)
   \left(q;q\right)_{k+1}\left(q^{-1};q^{-1}\right)_{k}.
\]
With the change of variables $m=k+j$ (hence $k\le m\le n$), this becomes
\[
\mathbb{E}_{x}\big[q^{Q_{n+1}}\big]
= \sum_{k=0}^{n}\sum_{m=k}^{n}
   c^{m}\binom{n}{m}
   h_{m-k}\left(a_1,\dots,a_{k+1}\right)
   \left(q;q\right)_{k+1}\left(q^{-1};q^{-1}\right)_{k}.
\]
Exchanging the sums yields
\begin{equation}
    \mathbb{E}_{x}\big[q^{Q_{n+1}}\big] = \sum_{m=0}^{n} c^{m}\binom{n}{m}\sum_{k=0}^{m}h_{m-k}\left(a_1,\dots,a_{k+1}\right)\left(q;q\right)_{k+1}\left(q^{-1};q^{-1}\right)_{k}.
    \label{step2}
\end{equation}

By comparing the coefficient of $c^m$ in \eqref{suff} and in \eqref{step2}, it is sufficient to show,
\begin{equation}
    \sum_{k=0}^{m} h_{m-k}\left(a_1,\dots a_{k+1}\right)\left(q;q\right)_{k+1}\left(q^{-1};q^{-1}\right)_k = \left(1-q\right)^{2m+1}\left(-1\right)^m B_m\left(1,q\right).\label{compare}
\end{equation}

Applying \eqref{standard} to the left hand side of \eqref{compare} and reindexing, we obtain
\[
LHS=\sum_{k=0}^{m-1} h_{m-k-1}\left(a_1,\dots,a_{k+1}\right)\left(q;q\right)_{k+1}\left(q^{-1};q^{-1}\right)_k
\bigl[a_{k+1}+\left(1-q^{k+2}\right)\left(1-q^{-k-1}\right)\bigr].
\]
Since \(a_{k+1}=q^{k+1}+q^{-k-1}-2\), the bracket simplifies to
\[
a_{k+1}+\left(1-q^{k+2}\right)\left(1-q^{-k-1}\right)=\left(1-q\right)\left(q^{k+1}-1\right)=-\left(1-q\right)\left(1-q^{k+1}\right),
\]
hence
\[
LHS=\sum_{k=0}^{m-1}-\left(1-q\right)\left(1-q^{k+1}\right)h_{m-k-1}\left(a_1,\dots,a_{k+1}\right)\left(q;q\right)_{k+1}\left(q^{-1};q^{-1}\right)_k.
\]
Now use \(\left(q^{-1};q^{-1}\right)_k=\left(-1\right)^k q^{-k\left(k+1\right)/2}\left(q;q\right)_k\) and \(\left(q;q\right)_{k+1}=\left(1-q^{k+1}\right)\left(q;q\right)_k\) to get
\[
\left(1-q^{k+1}\right)\left(q;q\right)_{k+1}\left(q^{-1};q^{-1}\right)_k=\left(-1\right)^k q^{-k\left(k+1\right)/2}\left(q;q\right)_{k+1}^2,
\]
so
\[
LHS=\sum_{k=0}^{m-1}\left(-1\right)^{k+1}\left(1-q\right)q^{-k\left(k+1\right)/2}h_{m-k-1}\left(a_1,\dots,a_{k+1}\right)\left(q;q\right)_{k+1}^2.
\]

Finally, note that
\[
a_i=q^i+q^{-i}-2=q^{-i}\left(1-q^i\right)^2=\left(1-q\right)^2q^{-i}[i]_q^{2},
\]
so by the homogeneity of \(h_{m-k-1}\) and \(\left(q;q\right)_{k+1}=\left(1-q\right)^{k+1}\left[k+1\right]_q!\), the total \(\left(1-q\right)\)-power is
\(\left(1-q\right)^{2\left(m-k-1\right)}\cdot\left(1-q\right)^{2\left(k+1\right)}\cdot\left(1-q\right)=\left(1-q\right)^{2m+1}\). Therefore
\[
\begin{aligned}
LHS &=\left(1-q\right)^{2m+1}\sum_{k=0}^{m-1}\left(-1\right)^{k+1}q^{-k\left(k+1\right)/2}\left([k+1]_q!\right)^2 \\
&\qquad\qquad\qquad\qquad\times
h_{m-k-1}\bigl(q^{-1}\left[1\right]_q^{2},q^{-2}\left[2\right]_q^{2},\dots,q^{-\left(k+1\right)}\left[k+1\right]_q^{2}\bigr).
\end{aligned}
\]

This completes the simplification of the left-hand side. This is equal to the right hand side of \eqref{compare} due to \eqref{alter} and the fact that $\left(-1\right)^{k+1}=\left(-1\right)^{-k-1}$.
\end{proof}

\begin{remark}
    What we seek from the exact formula is the asymptotic size of the convoy. In the TASEP case, the expression admits a substantial simplification: by exploiting the combinatorial interpretation of the $q$-Genocchi numbers, it reduces to a hypergeometric function whose limiting behavior is well understood. We present this result to close the section. For the general $q$-case, however, a deeper understanding of this combinatorial interpretation would be required, and pursuing that direction lies beyond the scope of the present work.
\end{remark}

\begin{theorem}\label{q0genocchi}
    The q-Genocchi number is the Catalan number when $q = 0$. For any $k\geq0$, we have
    \[
    B_k\left(1,0\right) = \frac{1}{1+k}\binom{2k}{k}.
    \]
\end{theorem}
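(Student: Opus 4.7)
The plan is to specialize the defining recurrence for $B_n(x,q)$ to $q = 0$, translate it into a functional equation for a bivariate generating function, and extract $F(1,t) = \sum_{n\ge 1} B_n(1,0)\,t^n$ by the kernel method.

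At $q=0$ the $q$-Hahn operator $\Delta_q f(x) = \frac{f(1+qx)-f(x)}{(1+qx)-x}$ degenerates to the ordinary divided difference $\Delta_0 f(x) = \frac{f(1)-f(x)}{1-x}$, so the recursion $B_n(x,q) = \Delta_q\!\bigl(x^2 B_{n-1}(x,q)\bigr)$ becomes, for $n\ge 2$,
\[
(1-x)\,B_n(x,0) \;=\; B_{n-1}(1,0) \;-\; x^2\, B_{n-1}(x,0),
\]
with initial condition $B_1(x,0) = 1$. Introducing $F(x,t) := \sum_{n\ge 1} B_n(x,0)\,t^n$, I would multiply the identity by $t^n$, sum over $n\ge 2$, and peel off the $n=1$ term to arrive at the functional equation
\[
\left(1 - x + t\,x^2\right) F(x,t) \;=\; t\,F(1,t) \;+\; t\,(1-x).
\]

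From here I would invoke the kernel method. Viewed as a quadratic in $x$, the kernel $1-x+tx^2$ has two roots, and the one that is analytic at $t=0$ is
\[
x_-(t) \;=\; \frac{1-\sqrt{1-4t}}{2t},\qquad x_-(0)=1.
\]
Substituting $x = x_-(t)$ kills the left-hand side of the functional equation and forces
\[
F(1,t) \;=\; x_-(t) - 1 \;=\; \frac{1-\sqrt{1-4t}}{2t} \;-\; 1.
\]
Since the Catalan generating function is exactly $C(t) = \sum_{k\ge 0} C_k\,t^k = \frac{1-\sqrt{1-4t}}{2t}$, extracting the coefficient of $t^k$ for $k\ge 1$ yields $B_k(1,0) = C_k = \frac{1}{k+1}\binom{2k}{k}$; the case $k=0$ is immediate from the convention $B_0(1,q) = 1$ recorded in the remark preceding the theorem.

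The derivation is essentially mechanical, so there is no serious obstacle. The only subtle point is the choice of branch in the kernel method, namely taking the root $x_-(t)$ that is a formal power series in $t$ rather than the blow-up root $x_+(t) = \frac{1+\sqrt{1-4t}}{2t}$; this choice is legitimate because each $B_n(x,0)$ is a polynomial in $x$, so after substituting $x = x_-(t)$ every coefficient of $t^k$ in $F(x_-(t),t)$ is a finite sum, and the identification $K(x_-(t),t)\,F(x_-(t),t) = 0$ is a genuine equality of formal power series.
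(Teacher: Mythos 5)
Your proof is correct, but it follows a genuinely different route from the paper. You work directly from the defining recurrence: at $q=0$ the $q$-Hahn operator degenerates to the divided difference $\frac{f(1)-f(x)}{1-x}$, and your functional equation $\left(1-x+tx^2\right)F(x,t)=t\,F(1,t)+t(1-x)$ for $F(x,t)=\sum_{n\ge1}B_n(x,0)t^n$ is verified by a straightforward summation; the kernel root $x_-(t)=\frac{1-\sqrt{1-4t}}{2t}$ is the Catalan generating function, so $F(1,t)=C(t)-1$ and $B_k(1,0)=C_k$ follows (one can sanity-check against the paper's table: $1,2,5,14$ at $q=0$). Your handling of the one delicate point — choosing the root that is a formal power series with $x_-(0)=1$ and noting that the substitution is a legitimate formal-power-series identity because each $B_n(x,0)$ is a polynomial and the $n$-th term has $t$-valuation at least $n$ — is adequate; implicitly you also use that specializing $q=0$ commutes with the recursion, which holds since $\Delta_q$ of a polynomial is a polynomial in $x$ and $q$. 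The paper instead invokes a combinatorial interpretation of $B_n(1,q)$ as the generating polynomial of surjective pistols by special inversions (citing Vong), observes that $q=0$ selects pistols with no inversions, and encodes these by multiplicity sequences that satisfy the Dyck-path/ballot conditions, hence Catalan. What each buys: your kernel-method argument is shorter, self-contained, and needs no external combinatorial theorem; the paper's argument provides a bijective picture that ties into its closing remark about understanding the coefficients $[q^m]B_k(1,q)$ for higher powers of $q$, something your $q=0$ specialization does not illuminate. Note also that the naive $q\to0$ limit of Han's generating function (Proposition \ref{prop.genocchi}) degenerates, so a generating-function route had to start from the recurrence, exactly as you did.
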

\begin{proof}
    To prove this lemma, we introduce a combinatorial interpretation of the q-Genocchi number. 

\begin{definition}
    A surjective pistol $p$ of size $2n$ is a surjective map from $\{1, 2, \dots ,2n \}$ on to $\{2, 4, \dots ,2n \}$, such that,
    \begin{itemize}
        \item $p$ is a surjection,
        \item $\forall i \in \{1, 2, \dots 2n \}, p\left(i\right)\geq i.$
    \end{itemize}
    We denote by $\mathcal{P}_{2n}$ the set of surjective pistols of size $2n$. For a surjective pistol $p \in \mathcal{P}_{2n}$, a special inversion is a pair $\left(i,j\right)$ such that 
    \begin{itemize}
        \item $i > j$, 
        \item $p\left(i\right) < p\left(j\right)$, and 
        \item $i$ is the rightmost position attaining the value $p\left(i\right)$.
    \end{itemize}
    We denote by $\operatorname{sinv}\left(p\right)$ the number of special inversions of $p$.
\end{definition}

The following theorem characterize the q-Genocchi number by surjective pistols.

\begin{theorem}[\cite{vong2014combinatoire}, Theorem 3.2.4]
    The $q$-Genocchi number $B_n\left(1,q\right)$ admits the following combinatorial interpretation in terms of surjective pistols:
    \[
    B_n\left(1,q\right) = \sum_{p \in \mathcal{P}_{2n}} q^{sinv\left(p\right)}.
    \]
\end{theorem}

A direct corollary is that $B_n\left(1,0\right)$ is equal to the number of surjective pistols such that $sinv\left(p\right) = 0$. Moreover, the condition $\operatorname{inv}\left(p\right)=0$ is equivalent to $\operatorname{sinv}\left(p\right)=0$. Indeed, if a surjective pistol $p$ contains an inversion $\left(i,j\right)$, then let $j'$ be the rightmost position such that $p\left(j'\right)=p\left(j\right)$. By construction, the pair $\left(i,j'\right)$ is a special inversion. Thus every ordinary inversion gives rise to a special inversion, and conversely if $\operatorname{sinv}\left(p\right)=0$ then no inversion can occur. $\operatorname{inv}\left(p\right)=0$ implies $\operatorname{sinv}\left(p\right)=0$ is clear.

The following lemma ends the proof of the theorem.
\begin{lemma}
    The number of surjective pistols of size $2n$ such that $inv\left(p\right) = 0$ is the Catalan number.
\end{lemma}
\begin{proof}
Let $p$ be such a pistol. For each $i \in \{1,\dots,n\}$, let $a_i$ denote the multiplicity of $2i$ in the image of $p$. The sequence $\left(a_1,\dots,a_n\right)$ satisfies the following conditions:

\begin{itemize}
    \item $a_i \geq 1$ for all $i$, since $p$ is surjective onto $\{2,4,\dots,2n\}$.
    \item For every $j$, we have $\sum_{i=1}^j a_i \leq 2j$, because at least $2n-2j$ of the preimages must map to values greater than $2j$, by the condition $p\left(k\right) \geq k$.
    \item Finally, $\sum_{i=1}^n a_i = 2n$, since the domain of $p$ has size $2n$.
\end{itemize}

We note that the sequence $\left(a_1,\dots,a_n\right)$ encodes all the surjective pistols $p$ when $\operatorname{inv}\left(p\right)=0$, since the non-inversion condition forces the values of $p$ to appear in increasing blocks (for example, of the form $2,4,4,4,6,6$).  

Now define $b_i = 2-a_i$. Then $\left(b_1,\dots,b_n\right)$ satisfies
\[
b_i \leq 1 \quad \text{for all } i, 
\qquad 
\sum_{k=1}^j b_k \geq 0 \quad \text{for all } j, 
\qquad 
\sum_{i=1}^n b_i = 0.
\]

These are precisely the conditions that characterize Dyck paths of semilength $n$: if we interpret $b_i=1$ as an up-step and $b_i=-1$ as a down-step, the above conditions say the path never goes below the axis and ends at height $0$. By a well-known result (see \cite{Stanley_Fomin_1999} for example), the number of such sequences is the $n$th Catalan number $C_n$.

Hence the number of surjective pistols of size $2n$ with $\operatorname{inv}\left(p\right)=0$ is $C_n$, as claimed.
\end{proof}
\end{proof}

Now plugging in the Catalan number into Theorem~\ref{thm.main}, we obtain
\[
\mathbb{E}_x\left[\mathcal{C}_0\right]
= \mathbb{E}_x\left[Q_{n+1}-Q_1\right]
= \sum_{k=0}^{n-1} \left(-1\right)^k c^{k+1}\frac{1}{k+1}\binom{n}{k+1}\binom{2k}{k}.
\]
To evaluate the sum, set $r=k+1$. Using
\[
\binom{2r-2}{r-1}
= 4^{r-1}\frac{\left(1/2\right)_{r-1}}{\left(r-1\right)!}, 
\qquad
\binom{n}{r} = \frac{\left(-n\right)_r}{r!}\left(-1\right)^r,
\]
together with
\(
\frac{1}{r} = \frac{\left(1\right)_r}{\left(2\right)_r}, 
\)
the summand becomes
\[
\left(-1\right)^k c^{k+1}\frac{1}{k+1}\binom{n}{k+1}\binom{2k}{k}
= \frac{\left(4c\right)^r}{2}\frac{\left(-n\right)_r\left(-\tfrac12\right)_r}{\left(1\right)_rr!}.
\]
Summing over $r=1,\dots,n$ and recognizing the truncated hypergeometric expansion gives
\[
\sum_{k=0}^{n-1} \left(-1\right)^k c^{k+1}\frac{1}{k+1}\binom{n}{k+1}\binom{2k}{k}
= \tfrac12\left({}_2F_1\left(-n,-\tfrac12;1;4c\right)-1\right).
\]
Therefore,
\[
\mathbb{E}_x\left[\mathcal{C}_0\right]
= \frac{1}{2}\left({}_2F_1\left(-n,-\tfrac12;1;4c\right)-1\right).
\]

The asymptotic behavior of hypergeometric functions has been intensively studied. We require the following result from \cite{bateman1953higher}. 

\begin{theorem}[\cite{bateman1953higher}, Chapter II,2.3.2]
    Consider a sequence of Gaussian hypergeometric functions $_2F_1\left(a,b_n;d;z\right)$, where $a, d, z$ are all fixed and $|b_n| \to \infty$,  if $d$ is not a non-positive integer, $|z| < 1$ and $arg \left(b_n z\right) \in \left(\frac{-3\pi}{2}, \frac{\pi}{2}\right)$, then We have
    \begin{equation}
        _2F_1\left(a,b_n;d;z\right) = e^{-i\pi a}\left[ \frac{\Gamma\left(d\right)}{\Gamma\left(d - a\right)}\right]\left(b_n z\right)^{-a} \left[1 + O\left(|b_n z|^{-1}\right)\right] + \left[ \frac{\Gamma\left(d\right)}{\Gamma\left(a\right)}\right] e^{b_n z} \left(b_n z\right)^{a-d} \left[1 + O\left(|bz|^{-1}\right)\right].\label{asymp}
    \end{equation}
\end{theorem}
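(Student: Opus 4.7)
The plan is to derive this asymptotic via the Euler integral representation of ${}_2F_1$ combined with a Laplace / steepest--descent analysis, which is the standard route to large--parameter expansions of Gaussian hypergeometric functions.

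First, assume temporarily that $\operatorname{Re}(d)>\operatorname{Re}(a)>0$ (the general case follows by analytic continuation in the parameters) and start from
\[
{}_2F_1(a,b_n;d;z)=\frac{\Gamma(d)}{\Gamma(a)\Gamma(d-a)}\int_0^1 t^{a-1}(1-t)^{d-a-1}(1-zt)^{-b_n}dt.
\]
Writing $(1-zt)^{-b_n}=e^{-b_n\log(1-zt)}$, I would view the integrand as a Laplace--type kernel with large parameter $b_n$. Since $\log(1-zt)$ is monotone on $(0,1)$ and therefore produces no interior saddle, the asymptotics are supplied by the two endpoints $t=0$ and $t=1$.

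Second, I would extract the algebraic term from the endpoint $t=0$ by the substitution $\tau=b_n z t$: the integrand becomes $(b_n z)^{-a}\tau^{a-1}(1-\tau/(b_n z))^{d-a-1}(1-\tau/b_n)^{-b_n}d\tau$, and since $(1-\tau/b_n)^{-b_n}\to e^{\tau}$ uniformly on compact $\tau$--sets, dominated convergence on a growing interval gives, after the branch of $(b_n z)^{-a}$ is fixed by the phase condition $\arg(b_n z)\in(-3\pi/2,\pi/2)$, the contribution $e^{-i\pi a}\frac{\Gamma(d)}{\Gamma(d-a)}(b_n z)^{-a}[1+O(|b_n z|^{-1})]$. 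The exponential term arises analogously from the endpoint $t=1$: the rescaling $t=1-u/b_n$ identifies the resulting integral with a Whittaker--type integral and yields $\frac{\Gamma(d)}{\Gamma(a)}e^{b_n z}(b_n z)^{a-d}[1+O(|b_n z|^{-1})]$. The middle region $\delta<t<1-\delta$ contributes a remainder that is exponentially smaller than both terms and is absorbed into the error.

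The main technical obstacle is to control the errors uniformly, and in particular to arrange the two endpoint expansions so that the combined error is of order $|b_n z|^{-1}$ rather than the weaker $b_n^{-1}$ coming from a naive Taylor expansion at each endpoint; this requires exploiting the extra decay from the oscillatory factor $(-b_n z)^{s}$ in a neighbourhood of the critical region after a rescaling of the integration variable. A second subtle point is the phase condition: $\arg(b_n z)\in(-3\pi/2,\pi/2)$ is precisely what is needed to prevent Stokes phenomena, in which the relative dominance of the algebraic and exponential terms would switch discontinuously, and to justify deforming the integration contour off the real axis when $z$ is complex. Bateman's original proof handles all these issues cleanly by passing to the Mellin--Barnes representation and applying the saddle--point method there, which produces the uniform error estimates automatically.
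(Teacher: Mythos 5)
The paper offers no proof of this statement: it is quoted verbatim from Erd\'elyi et al.\ (Higher Transcendental Functions, II.2.3.2) and used as a black box, so there is no internal argument to compare yours against. Judged on its own terms, your strategy (Euler integral plus endpoint analysis) is the natural one, and your treatment of the endpoint $t=0$ is essentially right: the rescaling $\tau=b_nzt$ and a Watson's-lemma argument along the rotated ray produce $e^{-i\pi a}\frac{\Gamma(d)}{\Gamma(d-a)}(b_nz)^{-a}\left[1+O(|b_nz|^{-1})\right]$, which is in fact the only term the paper uses (in its application $b_nz$ is real and negative, so the exponential term is negligible). Note, though, that your dominated-convergence step at $t=0$ implicitly needs $\operatorname{Re}(b_nz)\to-\infty$ so that $e^{\tau}$ decays along the ray of integration; on the rest of the stated sector the $t=0$ contribution is not even dominant.

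The genuine gap is in the exponential term. Carrying out your own substitution $t=1-u/b_n$ at the endpoint $t=1$ gives
\[
(1-zt)^{-b_n}=(1-z)^{-b_n}\Bigl(1+\tfrac{zu}{b_n(1-z)}\Bigr)^{-b_n}\approx(1-z)^{-b_n}e^{-zu/(1-z)},
\]
so that endpoint contributes $\frac{\Gamma(d)}{\Gamma(a)}\,(1-z)^{-b_n}\bigl(\tfrac{b_nz}{1-z}\bigr)^{a-d}$, \emph{not} $\frac{\Gamma(d)}{\Gamma(a)}\,e^{b_nz}(b_nz)^{a-d}$. For fixed $z\neq 0$ these differ by the factor $\exp\bigl(b_n(-\log(1-z)-z)\bigr)=\exp\bigl(b_n(z^2/2+\cdots)\bigr)$, which is exponentially large or small as $|b_n|\to\infty$; they coincide only in a confluent regime where $b_nz^2\to 0$. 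The test case ${}_2F_1(a,b;a;z)=(1-z)^{-b}$, for which the quoted formula would predict $e^{bz}$, makes the discrepancy explicit. So your endpoint computation does not yield the stated second term, and no amount of error bookkeeping in the middle region will fix this; one must either restrict to a regime where $e^{b_nz}$ and $(1-z)^{-b_n}$ are interchangeable, or (as in the paper's application) to $\operatorname{Re}(b_nz)\to-\infty$, where both candidates are absorbed into the error of the algebraic term. Finally, the closing deferral to ``Bateman's original proof'' via Mellin--Barnes is an appeal to authority, not part of a proof.
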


\begin{theorem}
    As $n\to\infty$, the expected convoy size in TASEP satisfies
    \[
    \lim_{n\to\infty}\frac{\mathbb{E}_x\left[\mathcal{C}_0\right]}{\sqrt{n}} = \sqrt{\frac{4x\left(1-x\right)}{\pi}}.
    \]
\end{theorem}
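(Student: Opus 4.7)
The plan is to plug the identified parameter values into the Bateman asymptotic \eqref{asymp} and track the leading constant. Specifically, in the displayed formula just before the theorem we have
\[
\mathbb{E}_x[\mathcal{C}_0] = \tfrac{1}{2}\bigl({}_2F_1(-n,-\tfrac{1}{2};1;4c)-1\bigr),
\]
so I set $a=-\tfrac{1}{2}$, $b_n=-n$, $d=1$, $z=4c$ in \eqref{asymp}. First I would verify the hypotheses: we need $d\notin\mathbb{Z}_{\le 0}$ (clear since $d=1$), $|z|<1$ (which requires $4c<1$, i.e.\ $x\ne\tfrac{1}{2}$), and the branch condition $\arg(b_n z)\in(-\tfrac{3\pi}{2},\tfrac{\pi}{2})$, which we satisfy by choosing $\arg(-4cn)=-\pi$, so that $(-4cn)^{1/2}=-i\sqrt{4cn}$.

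Next I would evaluate the two terms in \eqref{asymp}. The second term carries the factor $e^{b_n z}=e^{-4cn}$, which is exponentially small and therefore negligible after normalization by $\sqrt{n}$. The first term contributes
\[
e^{-i\pi a}\frac{\Gamma(d)}{\Gamma(d-a)}(b_n z)^{-a}
= e^{i\pi/2}\cdot\frac{1}{\Gamma(3/2)}\cdot(-i\sqrt{4cn})
= i\cdot\frac{2}{\sqrt{\pi}}\cdot(-i\sqrt{4cn})
= \frac{2\sqrt{4cn}}{\sqrt{\pi}},
\]
with a multiplicative error of order $|b_n z|^{-1}=O(1/n)$. Substituting back,
\[
\frac{\mathbb{E}_x[\mathcal{C}_0]}{\sqrt{n}}
= \frac{1}{2\sqrt{n}}\!\left(\frac{2\sqrt{4cn}}{\sqrt{\pi}}(1+O(n^{-1}))-1+O(e^{-4cn})\right)
\longrightarrow \sqrt{\frac{4c}{\pi}}=\sqrt{\frac{4x(1-x)}{\pi}}.
\]

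The only subtle step is the endpoint $c=\tfrac14$ (i.e.\ $x=\tfrac12$), where $|z|=1$ and \eqref{asymp} does not directly apply. I would handle this either by (i) invoking Theorem~\ref{thm.uni}, which already establishes the same limit for every $x\in(0,1)$, so the present theorem is merely a reconfirmation via the exact formula; or (ii) by a direct continuity/monotonicity argument in $x$, noting that for $x<\tfrac12$ we have convergence to $\sqrt{4c/\pi}$, which is continuous in $x$ up to $x=\tfrac12$, and bracketing $\mathbb{E}_{1/2}[\mathcal{C}_0]$ between $\mathbb{E}_{x_-}[\mathcal{C}_0]$ and $\mathbb{E}_{x_+}[\mathcal{C}_0]$ via a monotone coupling of $Q_i$ in the parameter $c$. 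The main potential obstacle is sign/branch bookkeeping in $(-4cn)^{-a}$: a wrong choice of $\arg$ flips the leading term's sign. I therefore view the explicit branch choice $\arg(-4cn)=-\pi$ as the computational crux, and everything else reduces to mechanical substitution and the observation that the exponentially small term disappears.
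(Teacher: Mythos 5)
Your proposal follows essentially the same route as the paper: substitute $a=-\tfrac12$, $b_n=-n$, $d=1$, $z=4c$ into the Bateman asymptotic \eqref{asymp}, discard the exponentially small second term (carrying $e^{-4cn}$), and read off the leading constant $\tfrac{2}{\sqrt{\pi}}\sqrt{4cn}$, which after dividing by $2\sqrt{n}$ gives $\sqrt{4x\left(1-x\right)/\pi}$. Your additional bookkeeping of the branch choice $\arg\left(-4cn\right)=-\pi$ and your treatment of the endpoint $x=\tfrac12$ (where $|z|=1$ so \eqref{asymp} does not literally apply — a point the paper silently skips; besides your two fixes, one can also settle it directly via Chu--Vandermonde, ${}_2F_1\left(-n,-\tfrac12;1;1\right)=\left(\tfrac32\right)_n/n!\sim 2\sqrt{n/\pi}$) are refinements of the paper's argument rather than deviations from it.
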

\begin{remark}
    This provides an alternative proof to Theorem \ref{thm.uni} when $q = 0$. It would be interesting to find a similar proof for the general $q \in [0,1)$. Recall that we proved in Theorem \ref{q0genocchi} the coefficient of the constant term of $B_k\left(1,q\right)$ (as a polynomial with respect to $q$) is $\frac{1}{1+k}\binom{2k}{k}$. One can further investigate $\left[q^n\right] B_k\left(1,q\right)$. For instance,
    \[
    \left[q^1\right] B_k\left(1,q\right) = \binom{2k+1}{k-2}, \text{when} \quad k\geq 2, \qquad [q^2] B_k\left(1,q\right) = \left(k+1\right) \binom{2k+1}{k-3}, \text{when} \quad k\geq 3.
    \]
    If one can find general expressions for $\left[q^n\right] B_k\left(1,q\right)$, the universal result might be proved by a similar computation and asymptotic behavior of the hypergeometric functions. However, we cannot find formulas for $\left[q^n\right] B_k\left(1,q\right)$ when $n \geq 4$.
\end{remark}

\begin{proof}
From the previous identity we have
\[
\mathbb{E}_x\left[\mathcal{C}_0\right]
= \tfrac{1}{2}\bigl({}_2F_1\left(-n,-\tfrac12;1;4c\right)-1\bigr),
\qquad c=x\left(1-x\right).
\]
Applying \eqref{asymp} with $a=-\tfrac12$, $b_n=-n$, $d=1$, and $z=4c$, and using $\Gamma\left(1\right)=1$, $\Gamma\left(\tfrac32\right)=\tfrac{\sqrt{\pi}}{2}$, we obtain
\[
{}_2F_1\left(-n,-\tfrac12;1;4c\right)
\sim \frac{2}{\sqrt{\pi}}\left(4cn\right)^{1/2},\qquad n\to\infty.
\]
Thus
\[
\mathbb{E}_x\left[\mathcal{C}_0\right]\sim \sqrt{\tfrac{4cn}{\pi}},\qquad
\lim_{n\to\infty}\frac{\mathbb{E}_x\left[\mathcal{C}_0\right]}{\sqrt{n}}
=\sqrt{\tfrac{4x\left(1-x\right)}{\pi}}.
\]
\end{proof}

\bibliographystyle{plain}
\bibliography{ref}
\section*{}

\noindent
(\textsc{Yuan Tian})\\
Max Planck Institute for Mathematics in the Sciences, Germany\\
Universität Leipzig, Germany\\
Email address: \texttt{yuan.tian@mis.mpg.de}

\end{document}